\documentclass[reqno]{amsart}
\usepackage{amsmath}
\usepackage{amsthm}
\usepackage{amsfonts}
\usepackage{amssymb}
\usepackage{graphicx}
\usepackage{mathrsfs}
\usepackage{cite}
\usepackage{textcomp}

\title[Integer points in a simplex] 
{Integer points in a simplex 
	and related Diophantine problems: 
Hardy-Littlewood asymptotics in higher dimensions}
\author[M.M. SKRIGANOV]{M.M. SKRIGANOV}

\address{St. Petersburg Department of the Steklov Mathematical Institute 
of the Russian Academy of Sciences, 
27, Fontanka, St.Petersburg, 191023, Russia}

\email{maksim88138813@mail.ru}

\keywords{Lattice point problem, multiple Bernoulli polynomials, multiplicative 
Diophantine approximation}

\subjclass[2010]{11P21, 11J13, 11J87, 11H46, 42B05, 52C07.}

\numberwithin{equation}{section}

\newtheorem{theorem}{Theorem}[section]
\newtheorem{lemma}{Lemma}[section]
\newtheorem{proposition}{Proposition}[section]

\theoremstyle{remark}
\newtheorem{remark}{Remark}[section]
\theoremstyle{remark}
\newtheorem{definition}{Definition}[section]

\def\bfa{\mathbf{a}}
\def\bfb{\mathbf{b}}
\def\bfc{\mathbf{c}}

\def\bfe{\mathbf{e}}

\def\bfl{\mathbf{l}}
\def\bfm{\mathbf{m}}
\def\bfn{\mathbf{n}}

\def\bfu{\mathbf{u}}
\def\bfv{\mathbf{v}}
\def\bfw{\mathbf{w}}
\def\bfx{\mathbf{x}}
\def\bfy{\mathbf{y}}
\def\bfz{\mathbf{z}}

\def\dd{\mathrm{d}}

\def\Cc{\mathbb{C}}

\def\Ff{\mathbb{F}}

\def\Qq{\mathbb{Q}}
\def\Rr{\mathbb{R}}

\def\Zz{\mathbb{Z}}

\def\BBB{\mathcal{B}}
\def\CCC{\mathcal{C}}

\def\EEE{\mathcal{E}}

\def\III{\mathcal{I}}

\def\MMM{\mathcal{M}}
\def\NNN{\mathcal{N}}

\def\RRR{\mathcal{R}}
\def\SSS{\mathcal{S}}

\def\UUU{\mathcal{U}}

\def\XXX{\mathcal{X}}

\def\BBBB{\mathscr{B}}

\def\LLLL{\mathscr{L}}

\def\SSSS{\mathscr{S}}

\renewcommand{\le}{\leqslant}
\renewcommand{\ge}{\geqslant}
\renewcommand{\Re}{\mathop{\mathrm{Re}}\nolimits}

\numberwithin{equation}{section}

\numberwithin{equation}{section}
\theoremstyle{plain}

\newcommand{\bp}{\begin{proof}}
\newcommand{\ep}{\end{proof}}
\newcommand{\bl}{\begin{lemma}}
\newcommand{\el}{\end{lemma}}
\newcommand{\bt}{\begin{theorem}}
\newcommand{\et}{\end{theorem}}
\newcommand{\bd}{\begin{definition}}
\newcommand{\ed}{\end{definition}}
\newcommand{\ba}{\begin{arrow}}
\newcommand{\ea}{\end{arrow}}

\begin{document}

\begin{abstract}
The aim of the present work is to study the asymptotic behavior of the numbers of 
integer points $N^{\mp}(t;\bfw)$  in  d-dimensional 
open and closed right-angled simplices
\begin{align*}
	&N^{-} (t;\bfw)=\#\{\bfx =(x_1,\dots ,x_d)\in\Zz^d: x_j>0, j\in 
	[d],\,\, \bfw\centerdot\bfx <t\}\, ,
	\\
	&N^{+} (t;\bfw)=\#\{\bfx =(x_1,\dots ,x_d)\in\Zz^d: x_j\ge0, j\in 
	[d],\,\, \bfw\centerdot\bfx \le t\}
	\end{align*} 
as $t\to\infty$. 
Here $\bfw\centerdot\bfx = w_1x_1+\cdots +w_d x_d$
and $\bfw =(w_1,\dots w_d)$ are positive weights such that all quotients
$\theta_{j,l}=\frac{w_l}{w_j},\,l\ne j,$ are \textit{irrational}.

For $d=2$, the problem was investigated by Hardy and 
Littlewood in the early 1920s.
For an arbitrary dimension $d$, we prove the asymptotic formulas
\begin{align*}
 N^{\mp}(t;\bfw)=\frac{1}{d!\, w_1\cdots w_d } 
\BBB_d^{\star}(t\mp\bfw\centerdot\bfe_{1/2};\bfw) 
+R^{\mp}(t;\bfw),
\end{align*} 
where $\bfe_{1/2}=(1/2,\dots,1/2)$ and $\BBB^{\star}_d(t;\bfw)$
is a
\textit{multiple Bernoulli polynomial} of degree 
$d$
\begin{equation*}
	\BBB^{\star}_d(t;\bfw)=\sum\nolimits_{0\le 
		n\le\lfloor\frac{d}{2}\rfloor}\,\frac{k!}{(2n)!(k-2n)!}\, 
	\BBB^{\star}_{2n}(\bfw)\,\, t^{k-2n}\, ,
\end{equation*}
where
\begin{align*}
	\BBB^{\star}_{2n}(\bfw)
	=\sum_{n_1+\dots 
		+n_d=n}\,\frac{(2n)!}{(2n_1)!\dots (2n_d)!}\, 
	B_{2n_1}(1/2)\dots B_{2n_d}(1/2)\, w_1^{2n_1}\dots w_d^{2n_d}.
\end{align*}
and $B_k(1/2)$ are the values of the Bernoulli polynomials $B_k(u)$ at $u=1/2$. 

The error terms $R^{\mp}(t;\bfw)$ turn out to be intimately related to  
\textit{multiplicative Diophantine approximation} to 
$\theta_{j,l}=\frac{w_l}{w_j},\,l\ne j,$.
In particular, we prove the following bounds: 

\textit{(i)} $R^{\mp}(t;\bfw)=O_{\epsilon}((\log t)^{d+\epsilon})$ 
for any $\epsilon >0$, for almost all unit vectors 
$\widehat{\bfw}$ which are normal to the hyperplane $\{\bfx:\bfw\centerdot\bfx=0\}$,

\textit{(ii)} $R^{\mp}(t;\bfw)=O_{\epsilon}(t^{\epsilon})$ 
for any $\epsilon >0$
for positive algebraic numbers $w_1,\dots, w_d$ linearly independent over 
the field of rationals $\Qq$.

\end{abstract}

\enlargethispage{5\baselineskip}

\maketitle

\thispagestyle{empty}

\section*{Contents}

\noindent{} {1. Introduction} 


\noindent{} {2. Multiple Bernoulli polynomials}

\noindent{} {3. Multiplicative Diophantine approximation}
 

\noindent{} {4. Main Results}


\noindent{} {5. Simplices and their Fourier transform}

\noindent{} {6. Fourier expansions for the error terms}

\noindent{} {7. Smoothed Fourier expansions. Proof of the Main Theorem}


References

\medskip

\section{Introduction}
\label{sec1}

Counting integer points in large compact domains is a classic number 
theory problem. 
In this paper, we consider the problem of counting the number of integer 
points in open and closed $d$-dimensional right-angled simplices:
\begin{align*}
N^{\mp}(t;\bfw)= \#\,\{\,\Delta^{\mp} 
	(t;\bfw)\,\cap\,\Zz^d\,\}, 
\end{align*}
where
\begin{align*}
&\Delta^{-} (t;\bfw)=\{\bfx:x_j>0,\, j\in 
[d], 
\,\,\bfw\centerdot\bfx<t\},	
	\\
&\Delta^{+} (t;\bfw)=\{\bfx:x_j\ge 0,\, 
j\in [d], 
\,\,\bfw\centerdot\bfx\le t\}.
\end{align*}
Here $\bfx\centerdot\bfw =\sum\nolimits_1^d x_j w_j$,\, $\bfw=(w_1\dots 
w_d)\in\Rr^d_{>0}$ and $[d]=\{1, \dots, d\}$. 
It is clear that
$N^{\mp}(t;\bfw)$ are piecewise constant,
monotone non-decreasing functions of $t\in\Rr_{>0}$; $N^{-}(t;\bfw)$ is 
left-continuous and $N^{+}(t;\bfw)$ is right-continuous.

A specific feature of this 
problem is the extreme sensitivity of 
the corresponding asymptotics 
to arithmetic 
characteristics of the weights $\bfw=(w_1,\dots,w_d)$.

The study of $N^{\mp}(t;\bfw)$  for integer $\bf 
w\in\Zz^d_{>0}$ is 
a special case of Ehrhart theory for counting integer 
point  for convex polytopes in $\Rr^d$ whose vertices 
have integer coordinates (lattice polytopes). 
For such polytopes and integer $t$, $N^{\mp}(t;\bfw)$  are  
polynomials of degree $d$  whose coefficients expressed 
in terms of Fourier-Dedekind sums. 
Various aspects of this theory are closely related to number theory, algebraic 
geometry, and combinatorics.
	
In the book \cite{4}, the reader will find a detailed description of 
the Ehrhart theory for general lattice polytopes; see also
\cite{10, 16*, 18} for the case of lattice simplices. 

In this paper, we consider the opposite situation, where 
 $w_1,\dots,w_d$ are positive weights such that all quotients
$\theta_{j,l}=\frac{w_l}{w_j},\,l\ne j,$ are irrational. 
The number-theoretic 
problems that arise in this case  belong to 
the theory of simultaneous Diophantine approximation.

In the early 1920s, Hardy and Littlewood \cite{13, 14} (reprinted in \cite{14a})
considered the number of integer points 
$N^{\mp}(t;\bfw)$ in the  right-angled triangle 
$\Delta^{\mp}(t;\bfw)$ with $\bfw=(w_1,w_2)$ 
and an irrational $\theta=\frac{w_2}{w_1}$. 
Let us put
\begin{equation}
	N^{\mp}(t;\bfw)=\frac{t^2}{2w_1w_2}\mp\frac{t}{2w_1}\mp
	\frac{t}{2w_2}+R^{\mp}(t;\bfw),
	\label{eq1.3}
\end{equation}

The  results obtained by Hardy and Littlewood, together with the 
subsequent results of other authors, can be summarized as follows.

\textit{(i)} 
The following bound is contained in \cite[Eq. (4.152)]{13}
\begin{equation*}
	R^{\mp}(t;\bfw)=O\big(\,\sum\nolimits_{j=1}^{c\log t}\, a_j\,\big)\, ,
\end{equation*}
where $a_j,\, j\in\Zz_{>0} ,$ are partial quotients in the 
continued fraction of $\theta$ and $c>0$ is an absolute constant.
Therefore,
\begin{equation*}
	R^{\mp}(t;\bfw)=O_{\theta}(\,\log t\,)\, ,
\end{equation*}
if the partial quotients of $\theta$ are bounded -- for 
example, if $\theta$ is a quadratic irrationality (see \cite[Thm A3]{13}).

Khinchin \cite{15}, using his results on the metric theory of continued 
fractions, proved that 
\begin{equation}
	R^{\mp}(t;\bfw)=O_{\theta,\epsilon}(\,\log t\, (\log\log t 
	)^{1+\epsilon}\,)\, ,
	\label{eq1.6}
\end{equation}
for any $\epsilon >0$ for almost all $\theta\in\Rr_{>0}$.

\textit{(ii)}
The following result is contained in 
\cite[Thm 2]{14}. Suppose that the inequality 
\begin{equation}
	m^{1+\kappa}\,|\sin\pi m \theta\,|\approx m^{1+\kappa}\,\langle  
	m\theta\rangle >c 
	\label{eq1.7}
\end{equation}
holds for all $m\in\Zz_{>0}$ with some constants $\kappa> 0$ and 
$c>0$. Then
\begin{equation*}
	R^{\mp}(t;\bfw)=O_{\epsilon}\big(\, t^{\frac{\kappa}{1+\kappa} 
	+\epsilon}\,\big)
\end{equation*}
for any $\epsilon >0$. Here
 $\langle x \rangle$ denotes the distance from $x\in\Rr$ to
the nearest integer, and the $\approx$ sign denotes the two side inequalities. 

By Roth's theorem on Diophantine approximation (unknown in the 
1920s),  inequality \eqref{eq1.7}
 holds for any 
$\kappa>0$ for each real algebraic irrationality $\theta$,
see \cite[Chapter V, Theorem 2A]{19}.
Therefore,  for such $\theta$,
\begin{equation*}
	R^{\mp}(t;\bfw)=O_{\theta,\epsilon}(\, t^{\epsilon}\,)
\end{equation*}
for any $\epsilon >0$.
\\

The aim of the present work is to extend the Hardy--Littlewood asymptotics 
\eqref{eq1.3} to higher dimensions. 
For an arbitrary dimension $d$, we  prove the asymptotic formulas
\begin{align}
	N^{\mp}(t;\bfw)=\LLLL^{\mp}_d(t;\bfw)+R^{\mp}(t;\bfw),
	\label{eq1.19I}
\end{align}
where the leading terms are defined as
\begin{align*}
	\LLLL^{\mp}_d(t;\bfw)=\frac{1}{d!\, w_1\cdots w_d } 
	\BBB_d^{\star}(t\mp\bfw\centerdot\bfe_{1/2};\bfw) 
\end{align*}
where $\bfe_{1/2}=(1/2,\dots,1/2)$ and $\BBB^{\star}_d(t;\bfw)$
is the $d$-th polynomial in the following sequence of
\textit{multiple Bernoulli polynomials}  
\begin{equation}
	\BBB^{\star}_k(t;\bfw)=\sum\nolimits_{0\le 
		n\le\lfloor\frac{k}{2}\rfloor}\,\frac{k!}{(2n)!(k-2n)!}\, 
	\BBB^{\star}_{2n}(\bfw)\,\, t^{k-2n}\,,\quad k\in\Zz_{\ge 0}\,,
	\label{eq1.19A}
\end{equation}
where
\begin{align*}
	\BBB^{\star}_{2n}(\bfw)
	=\sum_{n_1+\dots 
		+n_d=n}\,\frac{(2n)!}{(2n_1)!\dots (2n_d)!}\, 
	B_{2n_1}(1/2)\dots B_{2n_d}(1/2)\, w_1^{2n_1}\dots w_d^{2n_d}.
\end{align*}
and $B_k(1/2)$ are the values of the standard Bernoulli polynomials $B_k(u)$ at 
$u=1/2$.

The error terms $R^{\mp}(t;\bfw)$ are intimately related to  
\textit{multiplicative Diophantine approximation} of 
$\theta_{j,l}=\frac{w_l}{w_j},\,l\ne j$.
We will prove the following bounds, see Theorem~4.3.

\textit{(i)}
For almost all unit vectors 
$\widehat{\bfw}$ normal to the hyperplane $\{\bfx:\bfw\centerdot\bfx=0\}$, 
\begin{equation}
	R^{\mp}(t;\bfw)=O_{\widehat\bfw,\epsilon}\,\big(\,(\log\,t)^{d+\epsilon}\,\big)
	\label{eq1.3aI}
\end{equation}
for any $\epsilon >0$. 

\textit{(ii)} 
Suppose that the inequalities 
\begin{equation}
	m^{1+\kappa}\,\prod\nolimits_{l\in [d],\,l\ne j}\,\, \langle 
	\theta_{j,l}\,\, m\rangle >c, 
	\quad j\in [d]\,,
	\label{eq1.18A}
\end{equation}
hold  for all $m\in \Zz_{>0}$ with some constants $\kappa>0$ and
$c>0$ simultaneously for all $j\in [d]$.
Then
\begin{equation}
	R^{\mp}(t;\bfw)=O_{\bfw}\,\big(\, t^{\frac{\kappa}{1+\kappa} 
		(d-1)+\epsilon}\,\big) 
	\label{eq3bI}
\end{equation}
for any $\epsilon>0$.

By Schmidt's theorem on Diophantine approximation, the inequalities \eqref{eq1.18A}
hold for any 
$\kappa>0$ if the weights $\bfw=(w_1,\dots,w_d)$ are algebraic numbers linearly 
independent 
over $\Qq$,
see \cite[Chapter V, Theorem 2A]{19}.
Hence,  for such $\bfw$,
\begin{equation}
	R^{\mp}(t;\bfw)=O_{\bfw,\epsilon}\big(\, t^{\,\epsilon} \,\big)
	\label{eq3cI}
\end{equation}
for any $\epsilon >0$.

Note that the exponent 
$\frac{\kappa}{1+\kappa} (d-1)$ in \eqref{eq3bI} is the best possible and, in 
general, 
cannot be improved. The corresponding examples are given at the end of 
Section~4.
The bound \eqref{eq3bI} can be replaced by 
$O_{\bfw}\,\big(\, t^{\frac{\kappa}{1+\kappa} 
	(d-1)}\,(\log t)^{d-1}\,\big) $
but we do not consider such a minor improvement because 	 
it would require a much more complex technique, see \cite{22a}.


It is instructive to take a closer look at the leading terms.
It follows from \eqref{eq1.19A} that the leading terms satisfy the symmetry
relation
\begin{equation}
	\LLLL_d^{(-)}(-t;\bfw)=(-1)^d\,\LLLL_d^{(+)}(t;\bfw)\,. 
	\label{eq1.34}
\end{equation}

The leading terms for the first few dimensions can be found explicitly.
For $d=2$, we find
\begin{align*}
	\LLLL_2^{(\mp)}(t;w_1,w_2)=
	\frac{1}{2w_1w_2} \,\Bigl(\,&\Bigl(t\mp\frac{w_1+w_2}{2}\Bigr)^2 
	-\frac{w_1^2+w_2^2}{12} 
	\,\Bigr) 
	\\
	&= \frac{t^2}{2w_1w_2}\mp\frac{t}{2w_1}\mp\frac{t}{2w_2}+
	\frac{w_1^2+w_2^2+3w_1w_2}{12w_1w_2}	
\end{align*}
in agreement  with \eqref{eq1.3}. For $d=3$ and 4, we find
\begin{align*}
	\LLLL_3^{(\mp)}&(t;w_1,w_2,w_3)
	\notag
	\\
	=&\frac{1}{6w_1w_2w_3} \,\left(\, \Bigl(t\mp\frac{w_1+w_2+w_3}{2}\Bigr)^3 -
	\frac{w_1^2+w_2^2+w_3^2}{4} \,\,\Bigl(t\mp\frac{w_1+w_2+w_3}{2}\Bigr)
	\right)  	
\end{align*}
and
\begin{align*}
	&\LLLL_4^{(\mp)}(t;w_1,\dots,w_4)
	\notag
	\\
	&=\frac{1}{24w_1w_2w_3w_4} \,\left(\, 
	\Bigl(t\mp\frac{w_1+\cdots +w_4}{2}\bigr)^4 +
	c_2(\bfw)\,\Bigl(t\mp\frac{w_1+\cdots +w_4}{2}\Bigr)^2 + c_0(\bfw)
	\,\right), 
\end{align*}
where
\begin{align*}
	c_2(\bfw)&=-\frac{1}{2}\left(\,w_1^2 +\dots + w_4^2\,\right),
	\\
	c_0(\bfw)&=\frac{7}{240}
	\left(\, w_1^4+\cdots +w_4^4 \,\right) + 
	\frac{1}{24} \left(\,w_1^2 w_2^2 +\cdots 
	+w_3^2w_4^2\,\right).
\end{align*}

Expanding the parentheses in such formulas, we obtain the leading terms written in 
the form
\begin{align*}
\LLLL^{-}_d(t;\bfw)&=\frac{1}{d!\, w_1\cdots w_d }\,\,  
	\BBB_d(t;\bfw)\,,
	\\
	\LLLL^{+}_d(t;\bfw)&=\frac{1}{d!\, w_1\cdots w_d }\,\,
	\,\BBB_d(\,t+\bfw\centerdot\bfe_1;\bfw)\,,	
\end{align*}
where $\bfe_1=(1,\dots,1)$ and $\BBB_d(t;\bfw)$ 
is the $d$-th polynomial in the following sequence of
\textit{multiple Bernoulli polynomials}
\begin{equation}
	\BBB_k(t; \bfw)
	=\sum\nolimits_{n=0}^d\,\frac{k!}{n!(k-n)!}\,\, 
	\BBB_n(\bfw)\,\, t^{k-n}\,,\quad k\in\Zz_{\ge 0}\,,
	\label{eq2.10I}
\end{equation}
where 
\begin{equation*}
	\BBB_n(\bfw)=\BBB_n(\bfw,{\bf{0}})=\sum_{n_1+\dots 
		+n_d=n}\,\frac{n!}{n_1!\dots n_d!}\, 
	B_{n_1}\dots B_{n_d}\,\, w_1^{n_1}\dots w_d^{n_d}\,,
\end{equation*}
where $B_n$ are the standard Bernoulli numbers.

The polynomials $\BBB^{*}_k(t; \bfw)$ and $\BBB_k(t; \bfw)$ are related by
\begin{equation} 
\BBB_k(t\centerdot\bfu;\,\bfw)=
\BBB_k^{\star}(\,t+\bfw\centerdot(\bfu-\bfe_{1/2});\,\bfw\,)\,
\label{eq2.22I}
\end{equation}
(see Lemma~2.1 below).
Hence—and this circumstance should be emphasized—the leading terms in the 
asymptotic formulas under consideration can be written in very different ways, 
although at first glance such transformations of the formulas may not seem obvious.
We discuss these issues in Section~2.

It is worth noting that the asymptotics for $N^{\mp}(t;\bfw)$  can be easily 
extended to more general 
polytopes that are disjoint unions of  right-angled simplices of different 
dimensions.  
A typical example is the three-dimensional octahedron or, more generally, the 
d-dimensional cross-polytope:
\begin{align*}
	C^{\Delta} (t;\bfw)=\{\,\bfx:\, \sum\nolimits_{j\in [d]}\,w_j\, |x_j|<\,t\,\}\,.
\end{align*}
The corresponding asymptotic formulas for the number of integer points in such 
polytopes would be of interest, but we will not overload 
the paper with their detailed consideration.

Our approach to proving the asymptotic formulas \eqref{eq1.19I} is as follows. 
In addition to $N^{\mp}(t;\bfw)$, we  consider the number of 
integer points in shifted simplices:
\begin{equation*}
	N(t;\bfw,\bfu)=\#\,\{\,\Delta^{-} (t;\bfw,  
	\bfu)\,\cap\,\Zz^d\,\}, 
\end{equation*}
where $\Delta^{-} (t;\bfw,\bfu)=\Delta^{-} (t;\bfw)+\bfu$ is the simplex 
$\Delta^{-} (t;\bfw)$ shifted by $\bfu\in\Rr^d$: 
Clearly that $N(t;\bfw,\bfu)$ is a periodic function: 
$N(t;\bfw,\bfu 
+\bfz)=N(t;\bfw,\bfu)$ for $\bfz\in\Zz^d$. 

In Sections~5 and 6, we  calculate the Fourier expansions of $N(t;\bfw,\bfu)$.
Of course, these multidimensional Fourier series are not convergent, and a 
smoothing technique is used in Section 7 to handle them.
As a result, we reduce the bounds for the error terms in the asymptotic 
formulas to bounds for special lattice sums of the form 
\begin{equation}
	S(\theta, T)=
	\sum_{m=1}^T\,\,
	\frac{1}{m\,\langle\,\theta_1\,m\, 
		\rangle\dots\langle\,\theta_{d-1}\,m\,\rangle}\,,
	\label{eq1.34I}
\end{equation}
with  $\theta=(\theta_1,\dots,\theta_{d-1})\in\Rr^{d-1}$ irrational.
For $d=2$, such sums were studied by Hardy and Littlewood \cite{13, 14}
in connection with the asymptotics \eqref{eq1.3}; see also \cite{2A, 3A}.
For $d\ge3$, the sums \eqref{eq1.34I} will be estimated in Proposition~3.1 and 
Proposition~3.2 in terms of multiple Diophantine approximations. 
For example, in Proposition~3.2, we prove that if the inequalities \eqref{eq1.18A}
are satisfied with $\kappa>0$, then
\begin{equation*}
	S(\theta, T)=O_{\theta}\,\big(\, T^{\kappa} 
	\,(\log T)^{d-2}\,\big)\, , 
\end{equation*}
It is worth noting that the proof of this estimate for $d\ge 3$ turns out to 
be not much more difficult than for $d=2$.

Our main results on the asymptotics of $N(t;\bfw,\bfu)$ are given in Theorems~4.1 
and 4.2. As a corollary of these results (for special shifts $\bfu$), we 
obtain  the asymptotics for $N^{\mp}(t;\bfw)$ in Theorem~4.3.
\\

The reader will probably benefit from the following few additional comments.

Hardy and Littlewood \cite{13, 14} 
established a connection between the asymptotics \eqref{eq1.3} 
and the double Barnes zeta function. Spencer \cite{24} (reprinted in \cite{24a}) 
extended their result to 
higher dimensions and proved the following: for almost all $\bfw\in\Rr^d_{>0}$,
\begin{equation}
	N^{-}(t;\bfw)=(-1)^d\,\zeta (0, t, \bfw)+O_{\epsilon,\bfw}((\log 
	t)^{d+\epsilon}), 
	\label{eq1.37a}
\end{equation}
for any $\epsilon >0$. 
Here $\zeta (0, t, \bfw)$ is the value of the Barnes zeta function at  
$s=0$. Recall 
that the Barnes zeta function is a multidimensional generalization of  
Riemann--Hurwitz zeta functions, it is defined by
\begin{equation*}
	\zeta (s, t, \bfw)=\sum\nolimits_{\bfm\in\Zz^d_{\ge 0}}\, 
	(t+\bfw\centerdot\bfm)^{-s}
\end{equation*}
for $t>0,\,\bfw\in\Rr^d_{>0},\,\Re s>d$ and continued meromorphically to $\Cc$ 
with only simple 
poles at $s=1, \dots, d$. An extensive literature is devoted to these 
issues, see, for example; \cite{1, 3, 5*, 11, 11*}.

In \cite{24}, it was  noted that $\zeta (-k, t, \bfw)$ for $k\in\Zz_{\ge0}$ is a 
polynomial of degree $d+k$ whose coefficients are symmetric functions of 
$w_1,\dots,w_d$. The explicit values  are as follows:
\begin{equation*}
	\zeta (-k, t, \bfw)=\frac{(-1)^d k!}{w_1\dots w_d\,(k+d)!}\, 
	\BBB_{k+d}(t;\bfw),\quad k\in\Zz_{\ge 0},
\end{equation*}
where $\BBB_n(t;\bfw)$ are the multiple Bernoulli polynomials \eqref{eq2.10I},
see the references cited above.

For $k=0$, taking \eqref{eq2.22I} into account,  we obtain
\begin{equation*}
	\zeta (0, t, \bfw)=\frac{(-1)^d }{w_1\dots w_d\,d!}\, 
	\BBB_d(t;\bfw)=\frac{(-1)^d }{w_1\dots w_d\,d!}\, 
	\BBB^{\star}_d(t-\bfw\centerdot\bfe_{1/2};\bfw)
\end{equation*}
Hence, the leading terms in \eqref{eq1.37a}
and \eqref{eq1.19I} coincide.

Note that the metric bound for the error in \eqref{eq1.37a} is 
given in terms of the $d$-dimensional Lebesgue measure on $\Rr^d_{>0}$.
In other words,  \eqref{eq1.37a} holds for almost all 
$\widehat\bfw\in S^{d-1}_{>0}$ along sequences $t\to\infty$. 
Our bound \eqref{eq1.3aI} can be regarded as a refinement of \eqref{eq1.37a}. 

It is likely that the exponent $d+\epsilon$ in \eqref{eq1.37a} 
can be  replaced by $d-1+\epsilon$. 
This is the case in  two dimensions in view of Khinchin's bound \eqref{eq1.6}. It 
is known that such an improvement has 
been achieved for $d\ge3$ in similar problems in  discrepancy theory and in 
 lattice point counting for polytopes \cite{4*, 21, 22, 23}. 
A more detailed 
discussion of these issues is outside the scope of this paper.

In fact,  metric bounds such as  \eqref{eq1.3aI} 
are given in the paper to emphasize that sub--polynomial estimates for the errors 
are typical of the problem under consideration.
The proof of individual bounds of the type \eqref{eq3bI} and \eqref{eq3cI} is 
the most substantial 
part of our work.
\\

Asymptotic formulas for the number of integer points in the $d$-dimensional 
simplices $\Delta^{-}(t;\bfw)$ and cross-polytopes $C^{\Delta} (t;\bfw)$ with 
algebraic $(w_1,\dots,w_d)$ linearly independent over $\Qq$ were 
considered earlier by Borda \cite{5}. Unfortunately,
the leading terms in the asymptotics  were described in 
\cite[Definitions 2 and 5]{5} in a rather complicated way, 
and their relationship with multiple Bernoulli 
polynomials was not  clarified.
The error terms were estimated in \cite[Thm. 8]{5} as
$O(t^{\frac{(d-1)(d-2)}{2d-3}+\epsilon})$, which is very far from 
optimal.
However, a useful formula for the Fourier transform of the indicator function of a 
simplex,  given in  \cite[Eq.(18)]{5}, is used below in Lemma~5.1 to 
simplify some of our calculations.
\\


It should be also noted that the asymptotic formulas for $N^{\mp}(t;\bfw)$ 
have interesting 
applications to  problems in mathematical physics.
Of course, we do not have the opportunity to discuss  these issues here;
the reader may refer to 
\cite{8, 8a} 
and the references therein.
\\


\section{Multiple Bernoulli polynomials}
The polynomials of interest  belong to a 
broad class of so-called 
Appell polynomials.
Let $a_0=1, a_1, a_2, \dots$ be an arbitrary sequence of 
complex numbers. The polynomials $A_k(t),\, k\in\Zz_{\ge 0},$ are 
defined by the 
following equality in the ring of formal power series
\begin{equation*}
	\left(\sum\nolimits_{n\ge 0} 
	a_n\frac{s^n}{n!}\right)\,e^{ts}=
	\left(\sum\nolimits_{n\ge 0} 
	a_n\frac{s^n}{n!}\right)\,\left(\sum\nolimits_{m\ge 0}\frac{t^m 
		s^m}{m!}\right)
	=\sum\nolimits_{k\ge 0} A_k(t)\frac{s^k}{k!}.
\end{equation*}
A direct calculation gives 
\begin{equation*}
	A_k(t)=\sum\nolimits_{n=0}^k\,\frac{k!}{n!(k-n)!}\,\, a_n\,\, t^{k-n}.
\end{equation*}
These polynomials satisfy the conditions: 
$\deg A_k(t)=k,\, A_k(0)=a_k,\, A_0(t)=1$, 
$\frac{\dd}{\dd t}A_k(t)=kA_{k-1}(t)$ for $k\in\Zz_{>0}$; moreover, any sequence 
of polynomials satisfying these conditions can be obtained by the above 
construction. 
Numerous sequences of classical polynomials belong to the Appell class; see 
\cite{16**}, \cite[Chap. 2]{12**}.

For example, the standard 
Bernoulli polynomials $B_k(t)$ are defined by the generating function
\begin{equation}
	\frac{s}{e^s -1}\,\, e^{st} = \sum\nolimits_{k\ge 0}\, B_k(t)\, 
	\frac{s^k}{k!}\,.
	\label{eq2.3}
\end{equation}
We have
\begin{equation*}
	B_k(t)=\sum\nolimits_{n=0}^k\,\frac{k!}{n!(k-n)!}\, B_n\, t^{k-n},
\end{equation*}
where $B_k=B_k(0)$ are Bernoulli numbers defined by the expansion
\begin{equation*}
	\frac{s}{e^s -1} = \sum\nolimits_{k\ge 0}\, B_k\, \frac{s^k}{k!},
\end{equation*} 
$B_0=1, B_1=1/2, B_2=1/6, B_3=0, B_4=-1/30$, and  
$B_k=0$ for odd $k\ge 3$. 
All necessary facts about  Bernoulli  
polynomials and numbers 
can be found, for example, in \cite[Chapter 9]{9}.

For $\bfw= (w_1, \dots, w_d)\in\Rr_{>0}^d$ and $\bfu=(u_1,\dots,u_d)\in\Rr^d$, 
the \textit{multiple Bernoulli polynomials} 
$\BBB_k(t; \bfw,\bfu)$ are defined by the generating function
\begin{equation}
	\left(\prod\nolimits_{j=1}^d\frac{w_j\, s\,\,e^{w_ju_j s}}{e^{w_j s}-1} 
	\right)e^{st}	 = 
	\sum\nolimits_{k\ge 0}\, 
	\BBB_k(t;\bfw,\bfu)\, \frac{s^k}{k!}\, .
	\label{eq2.6}
\end{equation}
We have
\begin{equation}
	\BBB_k(t;\bfw,\bfu)=\sum\nolimits_{n=0}^k\,\frac{k!}{n!(k-n)!}\, 
	\BBB_n(\bfw,\bfu)\, t^{k-n},
	\label{eq2.7}
\end{equation}
where $\BBB_k(\bfw,\bfu)=\BBB_k(0;\bfw,\bfu)$ are the \textit{multiple Bernoulli 
	numbers} defined by the expansion
\begin{equation}
	\prod\nolimits_{j=1}^d\frac{w_j \, s\,\,e^{w_ju_j s}}{e^{w_j s}-1} = 
	\sum\nolimits_{n\ge 
		0}\, 
	\BBB_n(\bfw,\bfu)\, \frac{s^n}{n!},
	\label{eq2.8}
\end{equation}
Replacing $s$ by $w_j\, s$ for $j\in[d]$ in \eqref{eq2.3} and 
substituting these expressions into \eqref{eq2.8}, we find
\begin{equation*}
	\BBB_k(\bfw,\bfu)=\sum_{n_1+\dots +n_d=k}\,\frac{k!}{n_1!\dots n_d!}\, 
	B_{n_1}(u_1)\dots B_{n_d}(u_d)\, w_1^{n_1}\dots w_d^{n_d}.
\end{equation*}

Specializing to $\bfu=\bf{0}$ and $\bfu=\bfe_{1/2}$, we obtain 
two sets of polynomials:
\begin{equation}
	\BBB_k(t; \bfw)=\BBB_k(t; 
	\bfw,{\bf{0}})=\sum\nolimits_{n=0}^k\,\frac{k!}{n!(k-n)!}\, 
	\BBB_n(\bfw)\,\, t^{k-n},
	\label{eq2.10}
\end{equation}
where 
\begin{equation*}
	\BBB_n(\bfw)=\BBB_n(\bfw,{\bf{0}})=\sum_{n_1+\dots 
		+n_d=n}\,\frac{n!}{n_1!\dots n_d!}\, 
	B_{n_1}\dots B_{n_d}\,\, w_1^{n_1}\dots w_d^{n_d}\,,
\end{equation*}
and
\begin{equation}
	\BBB^{\star}_k(t;\bfw)=\BBB_k(t;\bfw, 
	\,\bfe_{1/2})=\sum\nolimits_{n=0}^k\,\frac{k!}{n!(k-n)!}\, 
	\BBB^{\star}_n(\bfw)\, t^{k-n},
	\label{eq2.16}
\end{equation}
where
\begin{align*}
	\BBB^{\star}_n(\bfw)&=\BBB_n(\bfw,\bfe_{1/2})
	\notag
	\\
	&=\sum_{n_1+\dots +n_d=n}\,\frac{n!}{n_1!\dots 
		n_d!}\, 
	B_{n_1}(1/2)\dots B_{n_d}(1/2)\, w_1^{n_1}\dots w_d^{n_d}.
\end{align*}
The values $B_k(1/2)$ of the Bernoulli polynomials $B_k(u)$ at $u=1/2$ are well 
known,
see \cite[Corollary 9.1.5]{9}:
\begin{equation}
	B_k(1/2)=-\left(1-\frac {1}{2^{k-1}}\right)\,B_k,
		\label{eq2.18}
\end{equation}
$\BBB^{\star}_0(1/2)=1,\,\BBB^{\star}_1(1/2)=0,\,\BBB^{\star}_2(1/2)=-\frac{1}{12},
\,\BBB^{\star}_4(1/2)=\frac{7}{240}$, and 
$\BBB^{\star}_k(1/2)=0$ for odd $k\ge 1$, since $B_k =0$ for 
odd $k\ge3$. 

Thus $\BBB^{\star}_n(\bfw)=0$ for odd $n$, and we obtain
\begin{equation}
	\BBB^{\star}_k(t;\bfw)=\sum\nolimits_{0\le 
		n\le\lfloor\frac{k}{2}\rfloor}\,\frac{k!}{(2n)!(k-2n)!}\, 
	\BBB^{\star}_{2n}(\bfw)\,\, t^{k-2n},
	\label{eq2.19}
\end{equation}
where
\begin{align*}
	\BBB^{\star}_{2n}(\bfw)
	=\sum_{n_1+\dots 
		+n_d=n}\,\frac{(2n)!}{(2n_1)!\dots 
		(2n_d)!}\, 
	B_{2n_1}(1/2)\dots B_{2n_d}(1/2)\, w_1^{2n_1}\dots w_d^{2n_d}.
\end{align*}

Using the generating functions for the polynomials 
$\BBB_k(t;\bfw,\bfu),\,\BBB_k(t;\bfw),$ 
and $\BBB^{\star}_k(t;\bfw)$, it is easy to prove the following. 
\begin{lemma}
	The polynomials
	$$\BBB_k(\,t-\bfw\centerdot\bfu;\,\bfw,\bfu\,)$$
	are independent of $\bfu\in\Rr^d$. 
	We also have 
	\begin{equation*}
			\BBB_k(\,t;\bfw,\bfu)=
			\BBB_k(t+\bfw\centerdot\bfu;\,\bfw)=
			\BBB_k^{\star}(\,t+\bfw\centerdot(\bfu-\bfe_{1/2});\,\bfw\,)
	\end{equation*}
		and 
	\begin{align*}
\BBB_k(-t;\bfw)=(-1)^k\,\BBB_k(t+\bfw\centerdot\bfe_1;\bfw)\,,\quad		
	\BBB^{\star}_k(-t;\bfw)=(-1)^k\,\BBB^{\star}_k(t;\bfw)\,.
		\end{align*}
\end{lemma}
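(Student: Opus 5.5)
All four identities will be read off from the generating function \eqref{eq2.6}, since a polynomial sequence is determined by its exponential generating function in $s$. Write
\begin{equation*}
F(s;\bfw,\bfu)=\prod\nolimits_{j=1}^d\frac{w_j\,s\,e^{w_ju_js}}{e^{w_js}-1}
=e^{(\bfw\centerdot\bfu)s}\,F(s;\bfw,{\bf 0}),
\end{equation*}
so that $\sum_k\BBB_k(t;\bfw,\bfu)s^k/k!=F(s;\bfw,\bfu)e^{ts}$. The whole argument consists of comparing coefficients of $s^k/k!$ in identities between such formal power series (equivalently, convergent series for small $|s|$).

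\emph{Independence of $\bfu$.} Replacing $t$ by $t-\bfw\centerdot\bfu$ gives
\begin{equation*}
\sum\nolimits_k\BBB_k(t-\bfw\centerdot\bfu;\bfw,\bfu)\frac{s^k}{k!}=F(s;\bfw,{\bf 0})\,e^{ts},
\end{equation*}
and the right-hand side does not involve $\bfu$. This proves the first claim and also $\BBB_k(t-\bfw\centerdot\bfu;\bfw,\bfu)=\BBB_k(t;\bfw)$. Substituting $t\mapsto t+\bfw\centerdot\bfu$ then yields $\BBB_k(t;\bfw,\bfu)=\BBB_k(t+\bfw\centerdot\bfu;\bfw)$. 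Applying this same identity with $\bfu=\bfe_{1/2}$, we get $\BBB^{\star}_k(t';\bfw)=\BBB_k(t'+\bfw\centerdot\bfe_{1/2};\bfw)$. Choosing $t'=t+\bfw\centerdot(\bfu-\bfe_{1/2})$ gives the second equality in the chain.

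\emph{Reflection identities.} The single scalar fact needed is
\begin{equation*}
\frac{-ws}{e^{-ws}-1}=\frac{ws\,e^{ws}}{e^{ws}-1},
\end{equation*}
which shows that $F(-s;\bfw,{\bf 0})=F(s;\bfw,{\bf 0})\,e^{(\bfw\centerdot\bfe_1)s}$. Hence
\begin{equation*}
\sum\nolimits_k\BBB_k(-t;\bfw)\frac{(-s)^k}{k!}=F(-s;\bfw,{\bf 0})\,e^{ts}
=F(s;\bfw,{\bf 0})\,e^{(t+\bfw\centerdot\bfe_1)s},
\end{equation*}
and comparing coefficients gives $(-1)^k\BBB_k(-t;\bfw)=\BBB_k(t+\bfw\centerdot\bfe_1;\bfw)$.

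For $\BBB^{\star}_k$, note that $F(s;\bfw,\bfe_{1/2})=\prod_j\dfrac{w_js}{2\sinh(w_js/2)}$ is even in $s$. Therefore $F(-s;\bfw,\bfe_{1/2})e^{-ts}$, which generates $(-1)^k\BBB^{\star}_k(-t;\bfw)$ by the same coefficient comparison, equals $F(s;\bfw,\bfe_{1/2})e^{-ts}$, which generates $\BBB^{\star}_k(-t;\bfw)$ after the substitution $s\to -s$. One must keep track of sign bookkeeping here. Alternatively, the result follows directly from \eqref{eq2.19}, where only the powers $t^{k-2n}$ appear, so every term has parity $(-1)^k$. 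The same conclusion also follows from the first reflection combined with the shift identity with $\bfu=\bfe_{1/2}$.

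\emph{Main obstacle.} None of the steps is hard. The only points needing care are the sign and shift bookkeeping when substituting $s\to -s$, and justifying coefficient comparison. The latter is immediate because all series converge for $|s|<2\pi/\max_j w_j$.
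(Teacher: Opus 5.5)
Your proposal is correct and is the same generating-function argument the paper has in mind; the paper gives no details beyond saying the identities follow easily from \eqref{eq2.6}, and your two key facts $F(s;\bfw,\bfu)=e^{(\bfw\centerdot\bfu)s}F(s;\bfw,{\bf 0})$ and $F(-s;\bfw,{\bf 0})=e^{(\bfw\centerdot\bfe_1)s}F(s;\bfw,{\bf 0})$ supply exactly those details. There is one labeling slip in your first argument for $\BBB^{\star}_k$: $F(-s;\bfw,\bfe_{1/2})e^{-ts}$ generates $(-1)^k\BBB^{\star}_k(t;\bfw)$, not $(-1)^k\BBB^{\star}_k(-t;\bfw)$, while $F(s;\bfw,\bfe_{1/2})e^{-ts}$ generates $\BBB^{\star}_k(-t;\bfw)$ directly with no substitution; once relabeled, evenness of $F(s;\bfw,\bfe_{1/2})$ gives the claim, and your two alternative derivations (parity in \eqref{eq2.19}, or the first reflection combined with the shift at $\bfu=\bfe_{1/2}$) are already correct as stated.
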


It should be noted that the polynomials $\BBB^{\star}_k(t;\bfw)$ are the most 
convenient for writing our 
asymptotic formulas.
In particular,  formula \eqref{eq2.19} for $\BBB^{\star}_k(t;\bfw)$ 
contains $\lfloor\frac{k}{2}\rfloor +1$ terms 
instead of the $k+1$ terms in formulas \eqref{eq2.7} and \eqref{eq2.10} 
for $\BBB_k(t;\bfw,\bfu)$ and $\BBB_k(t;\bfu)$.

We have listed the basic facts about multiple Bernoulli polynomials.
Note that these polynomials, also known in the literature as 
Bernoulli-N\=orlund and Bernoulli-Barnes
polynomials,  arise in various fields of analysis, combinatorics, and number 
theory; see, for example, \cite{1, 3, 11, 11a, 5*, 16**, 17, 11*} 
and the references therein. 
\\
\section{Multiplicative  Diophantine approximations}
\label{sec2}
The Diophantine properties of the problem under consideration are related to 
the arrangement of the hyperplane $\{\bfx :\bfw\centerdot\bfx = 0\}$. 
The hyperplane can be parametrized by the unit normal  vector
$\widehat\bfw=|\bfw|_2^{-1}\bfw\in S^{d-1}_{>0}$,
where $|\cdot|_2$ is the Euclidean norm in $\Rr^d$ and 
$S^{d-1}_{>0}=S^{d-1}\cap\Rr^d_{>0}$ is the ``positive face'' of  the 
standard unit 
sphere  $S^{d-1} \subset \Rr^d$.  
We regard $S^{d-1}_{>0}$ as a probability space with the normalized Lebesgue 
measure.

The hyperplane $\{\bfx :\bfw\centerdot\bfx = 0\}$ can also be  
parametrized  by the vectors 
$\theta_j =w^{-1}_j\bfw
\in\MMM_{j},$ 
where $\MMM_j=\{\bfx\in\Rr^d_{>0}:x_j=1\}\backsimeq\Rr^{d-1}_{>0}$ for
$j\in[d]$. 
These vectors form a $d\times d$ matrix 
$\Theta_{\widehat\bfw} =[\theta_{j,l}]_{j,l=1}^d$ with 1's on the diagonal.
The matrix entries 
$$\theta_{j,l}=\frac{w_l}{w_j}=\frac{\widehat w_l}{\widehat w_j},\quad l\ne j,$$ 
are the \textit{inclines} of the hyperplane 
$\{\bfx :\bfx\centerdot\bfw = 0\}$ to the $j$-th coordinate axis. 
The vectors $\theta_j$ are not independent: 
$\theta_k=\frac{w_j}{w_k}\,\theta_j\, ,$ and any one of them
can be regarded as a parametrization of the 
hyperplane $\{\bfx :\bfw\centerdot\bfx = 0\}$. 
These parametrizations are related by 
\begin{equation*}
	\left.
	\begin{aligned}
			&f_j\colon\widehat\bfw=(\widehat w_1,\dots,\widehat w_d)\in 
			S^{d-1}_{>0}\to		
			\theta_j=\Big(\,\frac{\widehat w_1}{\widehat w_j},\dots,\frac{\widehat 
					w_d}{\widehat w_j}\,\Big)\in \MMM_j\, ,
			\\
			&f_j^{-1}\colon\theta_j\in \MMM_j\to
			\widehat\bfw=|\,\theta_j|_2^{-1}\theta_j\in S^{d-1}_{>0}\, ,		
		\end{aligned}
	\quad\right\}
\end{equation*}
where $f_j$ and $f_j^{-1}$ are $\CCC^{\infty}$ bijections 
(diffeomorphisms between the manifolds $S^{d-1}_{>0}$ and $\MMM_j$).

\textbf{Definition 2.1.}
For $\kappa\ge0$, a set of numbers 
$\theta = (\theta_1,\dots,\theta_{d-1})\in\Rr^{d-1}$ is called
$\kappa$-\textit{multiplicatively approximable} if the inequality
\begin{equation}
	m^{1+\kappa}\,\prod\nolimits_{l\in [d-1]}\,\, \langle 
	\theta_{l}\,\, m\rangle >c(\theta) \,,
	\label{eq1.18*}
\end{equation}
holds for all $m\in \Zz_{>0}$ with some constant $c(\theta)>0$.

If the inequality \eqref{eq1.18*} holds for arbitrarily small $\kappa >0$,
the set  $(\theta_1,\dots,\theta_{d-1})$ is called \textit{badly 
	multiplicatively approximable}.

\textbf{Remarks.}
\textit{(i)} It follows from the metric theory of Diophantine 
approximation that almost all 
$\theta = (\theta_1,\dots,\theta_{d-1})\in\Rr^{d-1}$ 
are badly multiplicatively approximable; see \cite[Chap.1, Sec. 8]{25} 
and the references therein.

\textit{(ii)}
It follows from  Schmidt's theorem on Diophantine approximation that a set
$(\theta_1,\dots,\theta_{d-1})$ is
badly multiplicatively approximable if $1,\theta_1,\dots,\theta_{d-1}$ 
are real algebraic numbers linearly independent over $\Qq$ (see \cite[Chap.VI, 
Thm.1B]{19}).  

\textit{(iii)}
For $d=2$, there are irrational numbers $\theta_1$ (quadratic irrationalities, for 
example) such that \eqref{eq1.18*} 
holds with $\kappa=0$.
At the same time, a famous conjecture of Littlewood states that  
there are no real numbers $\theta_1,\dots,\theta_{d-1}$ for $d\ge3$ satisfying 
\eqref{eq1.18*} with 
$\kappa =0$ (see \cite[Chap.V, Sec. 10.3] {4**} and \cite [Sec. 30.3]{26}).
In the book \cite [Chap.IV]{26}, the reader will  find a discussion of  
multiplicative Diophantine approximation in the context of various problems in 
number theory.
\\

We extend Definition~2.1 to the set of inclines $\Theta_{\widehat\bfw}$.

\textbf{Definition 2.2.}
For $\kappa > 0$, the inclines $\Theta_{\widehat\bfw}$ are called 
$\kappa$-\textit{multiplicatively approximable} if the inequalities
\begin{equation}
	m^{1+\kappa}\,\prod\nolimits_{l\in [d],\,l\ne j}\,\, \langle 
	\theta_{j,l}\,\, m\rangle >c(\Theta_{\widehat\bfw}), 
	\quad j\in [d]\,,
	\label{eq1.18}
\end{equation}
hold  for all $m\in \Zz_{>0}$ with some 
$c(\Theta_{\widehat\bfw})>0$ simultaneously for all $j\in [d]$.

If the inequalities \eqref{eq1.18} hold with arbitrarily small $\kappa >0$,
the inclines $\Theta_{\widehat\bfw}$ are called \textit{badly multiplicatively 
	approximable}.

\textbf{Remarks.}
\textit{(i)}
Based on the results of the metric theory of Diophantine approximation,
we will see that for almost all $\widehat\bfw\in S^{d-1}_{>0}$, 
the inclines $\Theta_{\widehat\bfw}$ are badly multiplicatively 
approximable (see Proposition~2.1 below).

\textit{(ii)}
It  follows from
Schmidt's theorem cited above  
that the inclines $\Theta_{\widehat\bfw}$ are
badly multiplicatively  approximable if $w_1,\dots,w_d$ are
real algebraic numbers linearly independent over $\Qq$. 
\\

For inclines $\Theta_{\widehat\bfw} =[\theta_{j,l}]_{j,l=1}^d$ with irrational 
$\theta_{j,l},\,l\ne j$, we consider the following sums:
\begin{equation}
S_0(\Theta_{\widehat\bfw}, T)=\sum_{j=1}^d\, S_0(\theta_j, T),\quad
S(\Theta_{\widehat\bfw}, T)=\sum_{j=1}^d\, S(\theta_j, T)\,,	
	\label{eq1D}
	\end{equation}
where	
\begin{equation*}
	\left.
	\begin{aligned}
		S_0(\theta_j, T)&=
		\sum_{m=1}^T
		\,\,
		\frac{1}{m\,\,\prod_{l\in[d],l\ne j}|\sin(\pi\,\theta_{j,l}\, m)|}\,,
		\\
		S(\theta_j, T)&=
		\sum_{m=1}^T\,\,
		\frac{1}{m\,\prod_{l\in[d],l\ne j}\langle\,\theta_{j,l}\,m\, \rangle}\,,
		\end{aligned}
	\quad\right\}
\end{equation*}
These sums are equivalent: $S(\Theta_{\widehat\bfw}, T)\approx  
S_0(\Theta_{\widehat\bfw}, T) $, or, more precisely,
\begin{equation*}
\pi^{1-d}\,	S(\Theta_{\widehat\bfw}, T)\le
S_0(\Theta_{\widehat\bfw}, T)\le 
2^{1-d}\,S(\Theta_{\widehat\bfw}, T)\,,
	\end{equation*}
since $2\langle x\rangle \le |\sin\pi x|\le \pi \langle x\rangle$.
Both  of these sums are used in the paper.
\begin{proposition}
For almost all $\widehat\bfw\in S^{d-1}_{>0}$, 
the inclines $\Theta_{\widehat\bfw}$ are badly multiplicatively approximable and
\begin{equation*}
S(\Theta_{\widehat\bfw}, T)=
O_{\widehat\bfw,\epsilon}\,\big(\,(\log\,T)^{d+\epsilon}\,
\big)
\end{equation*}
for any $\epsilon >0$.
%
\end{proposition}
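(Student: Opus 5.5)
Both assertions are proved first for a single parametrization $\theta_j\in\MMM_j\simeq\Rr^{d-1}_{>0}$ and then transferred to $S^{d-1}_{>0}$. The transfer uses the diffeomorphisms $f_j$: each $f_j$ is a $\CCC^{\infty}$ bijection with smooth inverse, so it maps Lebesgue-null sets in $\MMM_j$ to null sets in $S^{d-1}_{>0}$. A finite union of null sets is null. It therefore suffices to show, for each fixed $j$, that for almost every $\theta=(\theta_1,\dots,\theta_{d-1})\in\Rr^{d-1}$ (the entries $\theta_{j,l}$, $l\ne j$) two things hold: $\theta$ is badly multiplicatively approximable, and $S(\theta,T)=O_{\theta,\epsilon}((\log T)^{d+\epsilon})$. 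Periodicity in each coordinate reduces everything to the unit cube $[0,1)^{d-1}$.

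For the metric statement I would use the first Borel--Cantelli lemma together with the standard measure estimate
\[
\operatorname{meas}\Bigl\{\bfx\in[0,1)^{d-1}:\ \prod\nolimits_{l=1}^{d-1}\langle x_l\rangle<\delta\Bigr\}\ll \delta\,(\log(1/\delta))^{d-2}, \qquad 0<\delta<1/2 .
\]
For fixed $m\ge1$ the map $\theta\mapsto m\theta \bmod 1$ preserves measure, so the set $E_m$ where $\prod_l\langle m\theta_l\rangle<m^{-1-\kappa}$ has measure $\ll m^{-1-\kappa}(\log m)^{d-2}$. This is summable, so almost every $\theta$ lies in only finitely many $E_m$. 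Adjusting the constant $c(\theta)$ gives \eqref{eq1.18*} for all $m$. Taking $\kappa=1/k$, $k\in\Zz_{>0}$, and a countable intersection yields bad multiplicative approximability almost everywhere.

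For the sum I would not go through the $\kappa$-bound, which only gives powers of $T$. Instead I would use a dyadic Borel--Cantelli argument on the sum itself. Fix $\epsilon>0$ and set $F(\theta)=\prod_l\langle\theta_l\rangle^{-1}$, truncated at height $H$: $F_H=\min(F,H)$. Computing the distribution function of the product gives
\[
\int_{[0,1)^{d-1}}F_H\,\dd\theta\ll(\log H)^{d-1}.
\]
The truncation is harmless on a full-measure set. By the previous paragraph applied with exponent $\epsilon'$, for almost every $\theta$ we have $F(m\theta)\le C(\theta)\,m^{1+\epsilon'}$ for all $m$. So for $m\le T$ and $H=T^{2}$ one gets $F(m\theta)=F_H(m\theta)$ once $T$ is large. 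Then
\[
\int S_H(\theta,T)\,\dd\theta=\sum_{m\le T}\frac1m\int F_H(m\theta)\,\dd\theta\ll(\log T)^{d}.
\]
Along $T_k=2^k$, Chebyshev's inequality gives
\[
\operatorname{meas}\{S_H(\theta,T_k)>k^{d}\,k^{1+\epsilon}\}\ll k^{-1-\epsilon},
\]
which is summable. Hence almost surely $S(\theta,T_k)\ll_\theta(\log T_k)^{d+1+\epsilon}$. Monotonicity of $S$ in $T$ then interpolates between consecutive $T_k$.

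The main obstacle is that the Chebyshev step costs a full factor $\log T$, producing $(\log T)^{d+1+\epsilon}$ rather than the claimed $(\log T)^{d+\epsilon}$. To recover the sharp exponent I would first split $S(\theta,T)=\sum_k\Sigma_k$ into dyadic blocks $\Sigma_k=\sum_{2^{k-1}<m\le 2^k}m^{-1}F(m\theta)$, each with mean $\ll k^{d-1}$. I would then apply Borel--Cantelli to each block at the threshold $k^{d-1}\cdot k^{1+\epsilon}$, since $\sum_k k^{-1-\epsilon}<\infty$. This shows that almost surely $\Sigma_k\ll k^{d+\epsilon}$ for all $k$. Summing over $k\le\log_2T$ then only gives $(\log T)^{d+1+\epsilon}$ again, so this alone does not close the gap. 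The refinement I would actually attempt is a second-moment or variance estimate for the truncated blocks $\Sigma_k$. Such an estimate would show that $\Sigma_k$ concentrates near its mean $k^{d-1}$ up to a factor $k^{\epsilon}$ for all large $k$ almost surely, the classical route behind Khinchin-type bounds. Summing the blocks then gives $(\log T)^{d+\epsilon}$, exactly the bound recorded in Proposition~2.1.
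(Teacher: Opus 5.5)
Your transfer from the parametrizations $\theta_j\in\MMM_j$ to $S^{d-1}_{>0}$ via the diffeomorphisms $f_j$ matches the paper. Your Borel--Cantelli proof of bad multiplicative approximability is correct. So are the bound $\int F_H\,\dd\theta\ll(\log H)^{d-1}$ and the resulting almost sure bound $S(\theta,T)\ll_\theta(\log T)^{d+1+\epsilon}$. The gap is the last step. The improvement to $(\log T)^{d+\epsilon}$ is only announced, via a variance estimate you never carry out. That route is also much harder than necessary. With truncation at $H\asymp T^2$ one has $\int F_H^2\,\dd\theta\asymp H(\log H)^{d-2}$, so you would need a much lower, $m$-dependent truncation, with the large values removed separately. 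You would also need bounds on the correlations $\int F_H(m\theta)F_H(m'\theta)\,\dd\theta$ for $m\ne m'$, which depend on the arithmetic of $m,m'$. As it stands, the proposal does not reach the stated exponent.

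The missing idea is that the first moment already suffices if you apply it to a weighted sum rather than block by block. Take $H_k=4^k$ and let $\Sigma_k^{(H_k)}$ be the truncated block. Then $\int\Sigma_k^{(H_k)}\,\dd\theta\ll(k+1)^{d-1}$, so by monotone convergence
\[
\int_{[0,1)^{d-1}}\sum_{k\ge0}(k+1)^{-d-\epsilon}\,\Sigma_k^{(H_k)}\,\dd\theta\ll\sum_{k\ge0}(k+1)^{-1-\epsilon}<\infty .
\]
Your Borel--Cantelli step gives $\Sigma_k=\Sigma_k^{(H_k)}$ for all large $k$. Hence $\sum_k(k+1)^{-d-\epsilon}\Sigma_k<\infty$ almost surely, and
\[
S(\theta,2^K)=\sum_{k\le K}\Sigma_k\le (K+1)^{d+\epsilon}\sum_{k\le K}(k+1)^{-d-\epsilon}\,\Sigma_k\ll_\theta K^{d+\epsilon}.
\]
Your blockwise Borel--Cantelli at threshold $k^{d+\epsilon}$ uses the same information but bounds each block separately. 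That is exactly where the extra factor $K$ comes from.

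Stated termwise, this is the almost everywhere convergence of $\sum_{m\ge1}\bigl(m(\log(m+1))^{d+\epsilon}\prod_l|\sin\pi\theta_l m|\bigr)^{-1}$. This is Spencer's Lemma~I, the one input the paper uses. The paper bounds $S_0(\theta_j,T)$ by $(\log(T+1))^{d+\epsilon}$ times this convergent series. It reads off bad multiplicative approximability from the boundedness of the series' terms, so your separate argument for that part, while correct, is not needed.
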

\begin{proof}
It was proved in  \cite[Lemma I]{24} that
for a given $j\in [d]$ and for almost all 
$(\theta_{j,1},\dots,\theta_{j,d})\in\MMM_j$, 
\begin{equation}
\sum_{m\ge 1}\,\,
\frac{1}{m\,\big(\log (m+1)\big)^{d+\epsilon}\,\,\prod_{l\in[d],l\ne 
j}|\sin(\pi\,\theta_{j,l}\, m)|} \le 
C_{\widehat\bfw,\epsilon} <\infty
\label{eq2D}	
\end{equation}
for any $\epsilon>0$.

Let $\EEE_j\subset\MMM_j$ denote the null set where the series \eqref{eq2D} 
diverges. Then $f_j^{-1}(\EEE_j)\subset S^{d-1}_{>0}$ is a null set, since
$f_j^{-1}$ is a $\CCC^{\infty}$ mapping. Let 
$\EEE=\bigcup_{j\in [d]}\,f_j^{-1}(\EEE_j)$. Then 
$f_j(\EEE)\subset\MMM_j$ for $j\in[d]$ are  null 
subsets, since
$f_j$ are $\CCC^{\infty}$ mappings. 
Therefore, for  $\widehat\bfw\in S^{d-1}_{>0}\setminus\EEE$	and 
$(\theta_{j,1},\dots,
\theta_{j,d})=f_j(\widehat\bfw)\in\MMM_j$, the series \eqref{eq2D} converge
simultaneously   for all $j\in[d]$. This implies that 
for $\widehat\bfw\in S^{d-1}_{>0}\setminus\EEE$,
the inequalities
\begin{equation*}
	m\,\big(\log (m+1)\big)^{d+\epsilon}\,\prod\nolimits_{l\in [d],\,l\ne j}\,\, 
	\langle 
	\theta_{j,l}\,\, m\rangle >c_{\widehat\bfw,\epsilon}(\Theta), 
	\quad j\in [d]\,,
\end{equation*}
hold  for all $m\in \Zz_{>0}$ with some 
$c_{\widehat\bfw,\epsilon}>0$ simultaneously for all $j\in [d]$.
Therefore, 
for almost all $\widehat\bfw\in S^{d-1}_{>0}$, 
the inclines $\Theta_{\widehat\bfw}$ are badly multiplicatively approximable. 
Furthermore, for  $\widehat\bfw\in S^{d-1}_{>0}\setminus\EEE$,
we also have
 \begin{align*}
	S(\Theta_{\widehat\bfw},T)&\ll S_0(\Theta_{\widehat\bfw},T)
	\\
	&\ll\sum_{j=1}^d\,\, (\log\,T)^{d+\epsilon} \sum_{m=1}^T\,\,
\frac{1}{m\,|\log (m+1)|^{d+\epsilon}\,\prod_{l\in[d],l\ne 
j}|\sin(\pi\,\theta_{j,l}\, m)|}
\\
&\ll 
\, (\log\,T)^{d+\epsilon}\,. 	
 	\end{align*}
 This completes the proof.	
 	 	\end{proof}


The following Proposition~3.2 contains a multidimensional generalization, 
for $d>2$, 
of the well-known bounds for $d=2$; see, for example, \cite[Chap.2, Lemma 
3.3]{2A}, 
\cite[Chap.III]{3A}.
\begin{proposition}
For $\kappa$-multiplicatively approximable inclines 
$\Theta_{\widehat\bfw}$ with $\kappa>0$, 
\begin{equation*}
	S(\Theta_{\widehat\bfw}, T)=O_{\bfw}\,\big(\, T^{\kappa} 
		\,(\log T)^{d-2}\,\big)\, , 
\end{equation*}
and for  badly multiplicatively approximable inclines $\Theta_{\widehat\bfw}$,
\begin{equation}
	S(\Theta_{\widehat\bfw}, T)=O_{\bfw,\epsilon}\big(\, T^{\epsilon} \,\big)
	\label{eq4D}
\end{equation}
for any $\epsilon >0$.
In particular,  \eqref{eq4D} holds if $w_1, 
\dots, w_d$ are positive algebraic numbers linearly independent over $\Qq$.
\end{proposition}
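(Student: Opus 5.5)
It suffices to bound each $S(\theta_j,T)$ separately, with $\theta_j$ a vector of $d-1$ irrationals $(\theta_1,\dots,\theta_{d-1})$, so $N=d-1$. The hypothesis is $m^{1+\kappa}\prod_{l}\langle\theta_l m\rangle>c$ for all $m\ge1$. The plan is dyadic decomposition in $m$ combined with a counting (pigeonhole/spacing) argument in the torus $[0,1/2]^{N}$. Fix a dyadic block $M\le m<2M$. For each $m$ in the block, write $\langle\theta_l m\rangle\in[2^{-a_l-1},2^{-a_l})$ with integers $a_l\ge1$. Since $\langle\theta_l m\rangle\gg M^{-1-\kappa}$, each $a_l$ ranges over $O(\log M)$ values. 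Let $A(\mathbf a)$ be the number of $m$ in the block lying in the box $B(\mathbf a)=\prod_l[2^{-a_l-1},2^{-a_l})$. The contribution of the block is then at most $M^{-1}\sum_{\mathbf a}2^{|\mathbf a|+N}A(\mathbf a)$, where $|\mathbf a|=a_1+\dots+a_N$.

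The key step is a spacing estimate for $A(\mathbf a)$. If $m\ne m'$ are both in the block and both in $B(\mathbf a)$ (up to the sign choices of the nearest-integer distances, at most $2^N$ of them), then $k=|m-m'|$ satisfies $1\le k<M$ and $\langle\theta_l k\rangle\le 2\cdot 2^{-a_l}$. Hence $\prod_l\langle\theta_l k\rangle\le 2^{N}2^{-|\mathbf a|}$, while the hypothesis gives $\prod_l\langle\theta_l k\rangle>c\,k^{-1-\kappa}\ge cM^{-1-\kappa}$. So $B(\mathbf a)$ can contain two points only if $2^{|\mathbf a|}\ll M^{1+\kappa}$. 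More precisely, I would refine each box into subboxes of side $2^{-a_l}/K_l$ so that the subbox volume is $\asymp c M^{-1-\kappa}$; each subbox then contains at most one point per sign pattern. This yields
\[
A(\mathbf a)\ll 1+2^{-|\mathbf a|}M^{1+\kappa}.
\]
Summing gives a block contribution of $M^{-1}\sum_{\mathbf a}\bigl(2^{|\mathbf a|}+M^{1+\kappa}\bigr)$ over $\mathbf a$ with $2^{|\mathbf a|}\ll M^{1+\kappa}$; the upper bound on $|\mathbf a|$ comes directly from the hypothesis applied to $m$ itself. The first part sums to $\ll M^{-1}M^{1+\kappa}(\log M)^{N-1}$, because the number of $\mathbf a$ with $|\mathbf a|=s$ is $\ll s^{N-1}$ and the geometric sum is dominated by its top term. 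The second part appears to give $M^{\kappa}(\log M)^{N}$, one logarithm too many.

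To remove that extra logarithm I would use the volume count more carefully: $A(\mathbf a)\ll 1+M\cdot\mathrm{vol}(B(\mathbf a))+\text{(spacing term)}$. In other words, combine the trivial bound $A(\mathbf a)\le M$, the equidistribution-type count, and the spacing bound, and use the spacing bound only in the range where $2^{|\mathbf a|}$ is comparable to $M^{1+\kappa}$. This is the step I expect to be the main obstacle: obtaining exactly $(\log T)^{d-2}$ rather than $(\log T)^{d-1}$.

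Summing over dyadic $M\le T$ is geometric in $M^{\kappa}$, which gives $S(\theta_j,T)\ll T^{\kappa}(\log T)^{d-2}$. If the sharp log power resists, one still gets $T^{\kappa}(\log T)^{d-1}$. That weaker bound already suffices for \eqref{eq4D} and for the later applications, since the logarithm is absorbed into $T^{\epsilon}$. For badly approximable inclines, apply the first bound with $\kappa=\epsilon/2$ to obtain \eqref{eq4D}. The final assertion then follows from Remark (ii) after Definition 2.2 (Schmidt's theorem).
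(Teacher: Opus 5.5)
\textbf{Gap.} Your proposal does not prove the first assertion as stated. What your argument actually yields is $S(\Theta_{\widehat{\bfw}},T)\ll T^{\kappa}(\log T)^{d-1}$, one logarithm short of the claim.

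The logarithm is lost in the subbox refinement. There you feed the difference $k=|m-m'|$ into the hypothesis only through the crude bound $k<M$, i.e.\ $\prod_l\langle\theta_l k\rangle>cM^{-1-\kappa}$. As a result your estimate for $2^{|\mathbf a|}A(\mathbf a)$ is the same $M^{1+\kappa}$ for every one of the $\asymp(\log M)^{d-1}$ admissible $\mathbf a$, so all boxes contribute equally.

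The repair you sketch, an equidistribution-type count $A(\mathbf a)\ll 1+M\,\mathrm{vol}(B(\mathbf a))$, is not what is needed. For boxes of volume down to $M^{-1-\kappa}$ it is also not implied by the hypothesis when $\kappa>0$. You are right that $(\log T)^{d-1}$ would suffice for the $T^{\epsilon}$ bound and for the later applications, but the proposition asserts $(\log T)^{d-2}$.

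\textbf{Missing idea.} The heart of the paper's Lemma~3.1 is to apply the hypothesis to the gap $k$ itself. This gives a separation that depends on the box. If $m<m'$ lie in the same $B(\mathbf a)$, then $\langle\theta_l k\rangle\le\langle\theta_l m\rangle+\langle\theta_l m'\rangle<2^{1-a_l}$ for every $l$. Hence $c<k^{1+\kappa}\prod_l\langle\theta_l k\rangle<k^{1+\kappa}2^{N-|\mathbf a|}$, i.e.\ $k\gg 2^{|\mathbf a|/(1+\kappa)}$. This is the paper's bound $\mu(L)\ge c_1 2^{|L|_1/(1+\kappa)}$ for the first return time to the box.

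With this, no refinement is needed:
\begin{itemize}
\item $A(\mathbf a)\ll 1+M2^{-|\mathbf a|/(1+\kappa)}\ll M2^{-|\mathbf a|/(1+\kappa)}$, using $2^{|\mathbf a|}\ll M^{1+\kappa}$ for nonempty boxes.
\item Hence $2^{|\mathbf a|}A(\mathbf a)\ll M\,2^{\kappa|\mathbf a|/(1+\kappa)}$.
\item Since $\kappa>0$, this grows geometrically in $|\mathbf a|$ with ratio $>1$. The sum over $\mathbf a$ is therefore dominated by the top shell $|\mathbf a|=(1+\kappa)\log_2 M+O(1)$, which contains only $\ll(\log M)^{N-1}=(\log M)^{d-2}$ multi-indices.
\item Each dyadic block then contributes $\ll M^{\kappa}(\log M)^{d-2}$, and your dyadic summation gives $T^{\kappa}(\log T)^{d-2}$.
\end{itemize}

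The paper runs the same count over all of $[1,m]$ to get $s(\theta,m)\ll m^{1+\kappa}(\log m)^{d-2}$, and then uses partial summation instead of dyadic blocks. Once the gap estimate is in hand, the two routes are equivalent.

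The rest of your proposal is fine: taking $\kappa=\epsilon/2$ for the badly approximable case, and Schmidt's theorem for algebraic weights.
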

\begin{proof}
Let
$\theta = (\theta_1,\dots,\theta_{d-1})\in\Rr^{d-1}$ be irrational numbers.
Consider the sum
\begin{equation}
s(\theta, m)=
\sum_{k=1}^m\,\,
\frac{1}{\langle\,\theta_1\,k\, \rangle\dots\langle\,\theta_{d-1}\,k\, \rangle}\,.
\label{eq5D}
\end{equation}
\begin{lemma}
For a set  
$\theta = (\theta_1,\dots,\theta_{d-1})\in\Rr^{d-1}$ of
$\kappa$-multiplicatively approximable numbers with $\kappa>0$,
\begin{equation*}
s(\theta, m)=O_{\theta}\,\big(\, m^{\kappa + 1} 
\,(\log m)^{d-2}\,\big)\,.	
	\end{equation*}	
	\end{lemma}
	\begin{proof}
We will need the following notation. For $L=(l_1,\dots,l_{d-1})
\in\Zz_{\ge 0}^{d-1}$, we set
\begin{align*}
	\frak{M}(L)=\{\,k\in\Zz_{> 0}: \langle\,\theta_j\,k\, \rangle\le 
	2^{-l_j-1},\,j\in[d-1]\,\}\,
\end{align*}
and
\begin{equation*}
\mu(L)=\min\{\,k: k\in\frak{M}(L)\,\}\,.	
	\end{equation*}				
We have				
\begin{align*}
	\mu(L)^{1+\kappa}\,\prod\nolimits_{j\in [d-1]}2^{-l_j-1}\ge
	\mu(L)^{1+\kappa}\,
	\prod\nolimits_{j\in [d-1]}\,\, \langle \, 
	\theta_{j}\,\, \mu(L)\,\rangle \ge c\,(\theta)\, . 
\end{align*}
Hence, 
\begin{equation*}
\mu(L)\ge c_1\,2^{\frac{1}{\kappa 
		+1}\, |L|_1}
	\end{equation*}
with $c_1=\big(c\,(\theta)\,2^{d-1}\big)^{\frac{1}{1+\kappa}}$ and  
$|L|_1=l_1+\cdots +l_{d-1}$.

We set
\begin{align*}
	\frak{N}(L)=\{\,k\in\Zz_{\ge 0}: 2^{-l_j-2}<
	\langle\,\theta_j\,k\, \rangle\le 
	2^{-l_j-1},\,j\in[d-1]\,\}\,.
\end{align*}		
Write 
$\langle\,\theta_j\,k\, \rangle=(-1)^{\sigma_j}\,(\,\theta_j\,k +n_j(k)\,)$,
where $\sigma_j\in\{0,1\}$ and the integers $n_j(k)$ are determined uniquely, since
$\theta_j$ for $j\in[d-1]$	are irrational. 
With this notation,
for a given set $\sigma=(\sigma_1,\dots,\sigma_{d-1})\in\{0,1\}^{d-1}$, we set
\begin{align*}
	\frak{N}^{\,\sigma}(L)=\{\,k\in\frak{N}(L) :
\langle\,\theta_j\,k\, \rangle
=(-1)^{\sigma_j}\,(\,\theta_j\,k +n_j(k)\,),\,j\in[d-1]\,\}.	
\end{align*}
These sets form the  partitions
\begin{equation}
\Zz_{>0}=\bigsqcup\nolimits_{\,L\in\Zz^{d-1}_{\ge0}}\,
\bigsqcup\nolimits_{\,\sigma\in\{0,1\}^{d-1}} \,\frak{N}^{\,\sigma}(L)\,.
\label{eq7D}
\end{equation}
Clearly, 
\begin{equation*}
\mu^{\,\sigma}(L)=\min\{\,k: k\in\frak{N}^{\,\sigma}(L)\,\}\ge\mu(L)\,,
\end{equation*}
since $\frak{N}^{\,\sigma}(L)\subset\frak{M}(L)$.

Let integers $0<k_1<k_2$ belong to $\frak{N}^{\,\sigma}(L)$. Then
$|\langle\,\theta_j\,k_2\, \rangle - \langle\,\theta_j\,k_1\, \rangle | 
<2^{-l_j-1}$.
On the other hand, 
$\langle\,\theta_j\,k_2\, \rangle - \langle\,\theta_j\,k_1\, \rangle =
(-1)^{\sigma_j}\,\big(\,\theta_j\,(k_2-k_1) +n_j(k_2)-n_j(k_1)\,\big)$,
and we can write
$$2^{-l_j-1}>|\langle\,\theta_j\,k_2\, \rangle - \langle\,\theta_j\,k_1\, \rangle | 
=|\theta_j\,(k_2-k_1) +n_j(k_2)-n_j(k_1)|
\ge \langle\,\theta_j\,(k_2-k_1)\, \rangle .$$
Therefore, $k_2-k_1 \in\frak{M}(L)$ and 
$$k_2-k_1 \ge \mu(L)\ge c_1\,2^{\frac{1}{\kappa +1}\, |L|_1}.$$	
Thus, the integers $k\in\frak{N}^{\,\sigma}(L)$	are well separated, and we have
\begin{equation*}
\#\{k\in\frak{N}^{\,\sigma}(L): k\le m\}
\le\big\lfloor \mu(L)^{-1} m\rfloor
\,\le\, c_1^{-1}\,\,2^{-\frac{1}{\kappa +1}\, |L|_1} \,m\,.	
	\end{equation*}
In addition, $\#\{k\in\frak{N}^{\,\sigma}(L): k\le m\}=0$ if $m<\mu(L)$.
Hence, $\#\{k\in\frak{N}^{\,\sigma}(L): k\le m\}=0$ for $|L|_1>\lambda(m)$, 
where $\lambda(m) =(\kappa +1)\log m +O(1)$ for large $m$.

Using the partition \eqref{eq7D}, we  write the sum \eqref{eq5D} as follows:
\begin{equation}
	s(\theta, m)=\sum\nolimits_{\,L\in\Zz^{d-1}_{\ge0}}\,
	\sum\nolimits_{\,\sigma\in[0,1]^{d-1}} \,s^{\sigma}_L (m)\,,
	\label{eq8D}
\end{equation}
where
\begin{equation*}
	s^{\sigma}_L ( m)=
	\sum\nolimits_{k\in\frak{N}^{\,\sigma}(L):\, k\le m}\,\,
	\frac{1}{\langle\,\theta_1\,k\, \rangle\dots\langle\,\theta_{d-1}\,k\, 
	\rangle}\,.
\end{equation*}
For this sum, we have
\begin{equation*}
	s^{\sigma}_L ( m)< 2^{2(d-1)+|L|_1}\,\#\{k\in\frak{N}^{\,\sigma}(L): k\le m\}
	\le c_2\,\,2^{\frac{\kappa}{\kappa +1}\, |L|_1} \,m\,
\end{equation*}
and $s^{\sigma}_L ( m)=0$ for $|L|_1>\lambda(m)$.

Substituting this bound into \eqref{eq8D}, we obtain
\begin{equation*}
	s(\theta, m)\le c_2\, m\,\sum\nolimits_{\,\ell\le\lambda(m)}\, 
	\,\,2^{\frac{\kappa}{\kappa +1}\, \ell}\,\,
	\#\{L\in\Zz^{d-1}_{\ge 0}: |L|_1=\ell\}.
\end{equation*}
Clearly, $\#\{L\in\Zz^{d-1}_{\ge 0}: |L|_1=\ell\}\ll \ell^{\,d-2}$. 
Therefore,
\begin{align*}
	s(\theta, m)&\ll\, m\,\sum\nolimits_{\,\ell\le\lambda(m)}\, 
	\,\,2^{\frac{\kappa}{\kappa +1}\, \ell}\,\,\ell^{\,d-2}
	\\
	&\ll m\, 2^{\frac{\kappa}{\kappa +1}\, \lambda(m)}\,\,\lambda(m)^{\,d-2}\ll
	 m^{\kappa + 1} \,(\log m)^{d-2}.
\end{align*}
This proves Lemma~3.1.
\end{proof}

By partial summation, we have
\begin{align*}
S(\theta, T)&=
\sum_{m=1}^T\,\,
\frac{1}{m \,\langle\,\theta_1\,k\, \rangle\dots\langle\,\theta_{d-1}\,k\, \rangle}
=\sum_{m=1}^T\,\frac{s(\theta,m)}{m(m+1)} +\frac{s(\theta,m)}{m+1}\,.
\end{align*}
Using Lemma~3.1, we obtain
\begin{equation*}
	S(\theta, T)\ll\sum_{m=1}^T\, m^{\kappa -1}\,\big(\log (m+1) \big)^{d-2}
	=O_{\theta}\,\big(\, T^{\kappa} 
	\,(\log T)^{d-2}\,\big)\, , 
\end{equation*}
since $\kappa>0$.
Substituting this bound with $\theta=\theta_j$ for $j\in[d-1]$ into \eqref{eq1D},
we complete the proof of Proposition~3.2. 
\end{proof}


\section{Main results}
\label{sec4}


We consider the number of 
integer points in shifted simplices:
\begin{equation}
	N(t;\bfw,\bfu)=\#\,\{\,\Delta^{-} (t;\bfw,  
	\bfu)\,\cap\,\Zz^d\,\}, 
	\label{eq1.22}
\end{equation}
where $\Delta^{-} (t;\bfw,\bfu)=\Delta^{-} (t;\bfw)+\bfu$ is the simplex 
$\Delta^{-} (t;\bfw)$ shifted by $\bfu\in\Rr^d$: 
\begin{align*}
	\Delta^{-} (t;\bfw,\bfu)
	=\{\bfx:x_j>u_j,\, j\in [d], 
	\,\,\bfw\centerdot\bfx<t+\bfw\centerdot\bfu\}.
\end{align*}
Clearly, $N(t;\bfw,\bfu)$ is a periodic piecewise constant function: 
$N(t;\bfw,\bfu 
+\bfz)=N(t;\bfw,\bfu)$ for $\bfz\in\Zz^d$. 
As a function of $t\in\Rr_{>0}$, $N(t;\bfw,\bfu)$ is  piecewise constant,
monotone non-decreasing, and left-continuous.

For $1/2>\delta >0$,  define a subset $\UUU(\delta)\subset\Rr^d$ by 
\begin{align*}
	\UUU(\delta)&=\{\,\bfu=(u_1, \dots, u_d)\in\Rr^d: \langle 
	u_j\rangle\ge\delta, 
	\,j\in [d]\,\}\notag
	\\
	&=\bigcup\nolimits_{\bfm\in\Zz^d}\, \left\{ K(\delta)
	+\bfm \,\right\},
\end{align*}
where $K(\delta)=\left[\,\delta, 1-\delta\,\right]^d\subset\Rr^d $ is a small cube 
centered at $\bfe_{1/2}$.

We write $((u))=\{u\}-1/2$,  where $\{u\}$ is the fractional part of 
$u\in\Rr$, and for $\bfu =(u_1, \dots, u_d)\in\Rr^d$, we write
\begin{equation*}
	((\bfu))=\big(((u_1)), \dots, ((u_d))\big),\quad
	\{\bfu\}=\big(\{u_1\}, \dots, \{u_d\}\big)\, \in\,\Rr^d\,.	
\end{equation*} 

We define a periodization of the multiple Bernoulli polynomials	by 
\begin{equation}
	\BBB_k(\,t;\bfw,\{\bfu\})=
	\BBB_k(t+\bfw\centerdot\{\bfu\};\,\bfw)=
	\BBB_k^{\star}(\,t+\bfw\centerdot((\bfu));\,\bfw\,)\,,
	\label{eq2.22a}
\end{equation}
(see Lemma 2.1).
We write  
$f(t\mp 0)=\lim\nolimits\,_{\alpha\downarrow 0}\, f(t\mp\alpha)$
for a piecewise continuous function $f$. 

\begin{theorem}
	[ \bf{The Main Theorem} ]
Suppose that the inclines  $\Theta_{\widehat\bfw}$ are $\kappa$-multiplicatively 
approximable. Then 
	\begin{equation}	
		N(t;\bfw,\bfu)=
		\LLLL_d(t;\bfw,\{\bfu\})+R(t;\bfw,\{\bfu\})\,,
		\label{eq3.2*c}	
	\end{equation}
	where  the \textbf{leading term} is defined by
	\begin{align*}
		\LLLL_d(t;\bfw,\{\bfu\})=\frac{1}{d!\,w_1\dots w_d}\,\,
		\BBB^{\star}_d\,(t+\bfw\centerdot((\bfu));\,\bfw)\,,	
	\end{align*}
	and the \textbf{ error}
	\begin{equation}
		\RRR_{\delta}(t;\bfw)=\sup\nolimits_{\,\bfu\,\in\,\UUU(\delta)}\, 
		\RRR(t;\bfw,\{\bfu\})\, ,
		\label{eq3.28**cc}
	\end{equation}
	where
	\begin{equation*}
		\RRR(t;\bfw,\{\bfu\})=\max\{\,|R(t-0;\bfw,\{\bfu\})|,\,
		|R(t+0;\bfw,\{\bfu\})|\,\}\,, 
	\end{equation*}
	satisfies the bound 
\begin{align}
	\RRR_{\delta}(t;\bfw)=
O_{\widehat\bfw,\delta,\epsilon}\Big(	\,S\,(\Theta_{\widehat\bfw}, 
T^{1+\epsilon})+\, \,T^{-1}\,t^{d-1} \,\Big)\, , 
	\label{eq1.31aa}
\end{align}		
for any $\epsilon>0$. Here 
$S(\Theta_{\widehat\bfw}, T)$
is defined in  \eqref{eq1D}, and $T$ is an arbitrarily large parameter.
\end{theorem}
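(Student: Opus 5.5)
We follow the route announced in the Introduction: Fourier analysis in the shift variable $\bfu$, then smoothing in $t$. Since $N(t;\bfw,\bfu)$ is $\Zz^d$-periodic in $\bfu$, we expand it in a Fourier series
\[
N(t;\bfw,\bfu)=\sum\nolimits_{\bfm\in\Zz^d}\widehat\chi_{t}(\bfm)\,e^{2\pi i\,\bfm\centerdot\bfu},
\]
where $\widehat\chi_t(\bfm)$ is the Fourier transform at $\bfm$ of the indicator of $\Delta^-(t;\bfw)$. For this transform we use the explicit formula of Borda. It is a sum over the $d+1$ vertices, with denominators $\prod(\bfm\centerdot\bfa)$ built from the edge vectors.

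The first step is to regroup the series by the set $J\subset[d]$ of nonzero coordinates of $\bfm$. For the frequencies with $m_j=0$ we integrate out the corresponding directions. This expresses everything through one-dimensional sums $\sum_{m\ne0}e^{2\pi i m(u_j+\dots)}/(2\pi i m)^{k}$, which are periodized Bernoulli polynomials $B_k(\{u_j\})$.

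The second step collects the contribution of the ``origin vertex'' terms over all $J$. Using the generating function \eqref{eq2.6} and Lemma~2.1, these should assemble exactly into $\frac{1}{d!\,w_1\cdots w_d}\BBB_d(t+\bfw\centerdot\{\bfu\};\bfw)$, which equals the leading term $\LLLL_d(t;\bfw,\{\bfu\})$ by \eqref{eq2.22a}.

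The remaining terms come from the vertices $t w_j^{-1}\bfe_j$. Their phases are $e^{2\pi i\, m_j t/w_j}$ and their denominators are $\prod_{l\ne j}(m_l-\theta_{j,l}m_j)$. After summing over $m_l$, $l\ne j$, for fixed $m_j=m$, these reduce to expressions of the type $\prod_{l\ne j}\cot\pi(\theta_{j,l}m)$ or $1/\sin\pi(\theta_{j,l}m)$, each multiplied by bounded periodic functions of $\bfu$. The restriction $\bfu\in\UUU(\delta)$ keeps the one-dimensional kernels in $u_l$ away from their jumps, so the bounds are uniform in $\bfu$. This is where the product $\prod_{l\ne j}|\sin\pi\theta_{j,l}m|^{-1}$, and hence $S_0(\theta_j,\cdot)$, enters.

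The third step handles convergence, which is the main obstacle. The multiple series does not converge, so we smooth in $t$. Let $\rho_T(t)=T\rho(Tt)$ be a mollifier with $\widehat\rho$ rapidly decaying. By monotonicity of $N$ in $t$,
\[
N_T(t-c/T)\le N(t\pm0)\le N_T(t+c/T),\qquad N_T=N*\rho_T .
\]
Moreover $\LLLL_d(t\pm c/T)-\LLLL_d(t)=O(T^{-1}t^{d-1})$, which gives the second error term in \eqref{eq1.31aa}. For $N_T$ every Fourier coefficient acquires the factor $\widehat\rho(\bfm\centerdot\bfa/T)$. This effectively truncates $|m|\le T^{1+\epsilon}$, with rapid decay beyond, and makes all rearrangements legitimate.

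The fourth step bounds the vertex contributions of $N_T$, one vertex $j$ at a time. The inner sums over $m_l$, $l\ne j$, are controlled by $\prod_{l\ne j}|\sin\pi\theta_{j,l}m|^{-1}$, uniformly in $\bfu\in\UUU(\delta)$, and the outer weight is $1/m$. Summing over $1\le m\le T^{1+\epsilon}$ gives $S_0(\Theta_{\widehat\bfw},T^{1+\epsilon})\ll S(\Theta_{\widehat\bfw},T^{1+\epsilon})$. The tail $m>T^{1+\epsilon}$ is negligible by the decay of $\widehat\rho$.

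The $\kappa$-approximability hypothesis is used in two places. It guarantees that the denominators never vanish, since the $\theta_{j,l}$ are irrational. It also keeps the near-resonant terms $|m_l-\theta_{j,l}m|<1$ under control, because these are exactly the terms bounded by $1/\langle\theta_{j,l}m\rangle$.

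The delicate bookkeeping is the lower-dimensional faces $J\ne[d]$. Their mixed contributions must either join the leading term or be bounded by the same sums in fewer variables. Those sums are dominated by $S$, using $\langle x\rangle\le1/2$.
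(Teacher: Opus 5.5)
\paragraph{A gap in your third step.} Smoothing in $t$ does not do what you claim. The only $t$-dependence of the oscillatory coefficient $R_{j,\bfm}(t;\bfw)$ is the phase $e^{2i\pi m_j t/w_j}$. Convolving with $\rho_T$ therefore multiplies it by $\widehat\rho(m_j/(w_jT))$, a factor involving $m_j$ alone: for the vertex $t w_j^{-1}\bfe_j$, your ``$\bfm\centerdot\bfa$'' is just $m_j/w_j$. For fixed $m_j\ne0$ the sum over $(m_l)_{l\ne j}$ is completely undamped, and $\sum_{m_l\in\Zz}|\theta_{j,l}m_j-m_l|^{-1}=\infty$. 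Already for $d=2$, with $m_1$ fixed, $|m_1|\ll T$, and $|m_2|\to\infty$, the smoothed coefficients are $\asymp 1/|m_2|$.

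This is not an artifact. The function $N*\rho_T(t;\bfw,\cdot)$ still jumps across the hyperplanes $u_l\in\Zz$, because points leave through the facets $x_l=u_l$, and these jumps are not cancelled by the leading term. So there is no truncation to $|\bfm|\le T^{1+\epsilon}$, and the series is not absolutely convergent. Nothing in your argument justifies splitting $R$ into the vertex pieces $R_j$, which are only distributions (see the remark after Proposition~6.2). Nor does it justify evaluating the inner $m_l$-sums in closed form and then summing over $m_j$ at a fixed point $\bfu$.

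\paragraph{What the paper does, and how to repair your route.} The paper smooths in $\bfu$ instead. With the product kernel $\omega_T$, every coordinate acquires a factor $\widetilde\omega(T^{-1}m_l)$. The bound $\bigl|m_j\prod_{l\ne j}(\theta_{j,l}m_j-m_l)\bigr|^{-1}\ll|m_j|^{\kappa}$ then gives absolute convergence (Lemma~7.1). Each inner sum equals $\frac{1}{2i\sin\pi a}\int e^{2i\pi a((x))}\omega(x-u)\,dx$, which has modulus at most $1/(2|\sin\pi a|)$ because $\omega\ge0$. The price is that monotonicity in $t$ no longer suffices for the comparison with $N$. One needs the inclusions $\Delta^-(t^-;\bfw,\bfv^-)\subset\Delta^-(t;\bfw)\subset\Delta^-(t^+;\bfw,\bfv^+)$ (Lemma~7.3) and the smoothness of $\BBB_d(t;\bfw,\{\bfu\})$ on $\UUU(\delta/2)$ (Lemma~7.2); this is where $\delta$ enters.

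Your $t$-smoothing can be rescued by adding an auxiliary mollifier $\omega_{T'}$ in $\bfu$. The same argument bounds $|R*\rho_T*\omega_{T'}|$ by a constant times $\sum_j\sum_{m\ne0}|\widehat\rho(m/(w_jT))|\big/\bigl(|m|\prod_{l\ne j}|\sin\pi\theta_{j,l}m|\bigr)$, uniformly in $T'$, since $|\widetilde\omega|\le1$. Letting $T'\to\infty$ then recovers $R*\rho_T$ at every $\bfu$ off the hyperplanes $u_l\in\Zz$, where it is continuous. That step is missing from your proposal.

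Two smaller points. First, the tail $m>T^{1+\epsilon}$ is not negligible by the decay of $\widehat\rho$ alone. You need $\bigl(m\prod_{l\ne j}|\sin\pi\theta_{j,l}m|\bigr)^{-1}\ll m^{\kappa}$, and this (with absolute convergence) is the real role of $\kappa$-approximability; irrationality already prevents vanishing denominators. Second, your ``either/or'' about the faces $J\ne[d]$ has a definite answer. All non-oscillatory face contributions join the leading term exactly (Proposition~5.1 with the Symmetrization Lemma, and Proposition~6.1). The oscillatory face terms are just the $m_l=0$ terms of $R_{j,\bfm}$, so no separate lower-dimensional sums remain.
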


The proof of this theorem is the subject of the next sections; for now, we wish to 
derive its corollaries. 

\begin{theorem}[ \bf{The Main Asymptotic Bounds} ]
	The error term \eqref{eq3.28**cc}  satisfies the following bounds:

\textit{(i)} For almost all $\widehat\bfw\in S^{d-1}_{>0}$,  
\begin{equation}
	\RRR_{\delta}(t;\bfw)= 
	O_{\widehat\bfw,\delta,\epsilon}\big(\,(\log\,t)^{d+\epsilon}\big). 
	\label{eq3.30q}
\end{equation}	
for any $\epsilon >0$.	

\textit{(ii)} For $\kappa$-multiplicatively approximable inclines 
$\Theta_{\widehat\bfw}$ with $\kappa>0$, 
\begin{equation}
	\RRR_{\delta}(t;\bfw)= O_{\delta,\epsilon}\big(\, 
	t^{\frac{\kappa}{1+\kappa} (d-1)+\epsilon}\,\big)\, , 
	\label{eq1.31*}
\end{equation}
for any $\epsilon>0$,
and for  badly multiplicatively approximable inclines $\Theta_{\widehat\bfw}$,
\begin{equation}
	\RRR_{\delta}(t;\bfw)= O_{\delta,\epsilon}(\, t^{\epsilon}) 
	\label{eq1.31}
\end{equation}
for any $\epsilon >0$.
In particular, \eqref{eq1.31} holds 
if $w_1, \dots, w_d$ are positive algebraic numbers 
linearly independent over $\Qq$.	
	\end{theorem}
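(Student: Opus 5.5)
Theorem~4.2 follows from the Main Theorem~4.1 combined with Propositions~3.1 and~3.2. The free parameter $T$ in \eqref{eq1.31aa} is chosen as a function of $t$ so that the two terms $S(\Theta_{\widehat\bfw},T^{1+\epsilon})$ and $T^{-1}t^{d-1}$ are balanced. Throughout, $\epsilon$ may be renamed, since it is arbitrary.

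For part \textit{(i)}, Proposition~3.1 gives a full-measure set of $\widehat\bfw\in S^{d-1}_{>0}$ on which $S(\Theta_{\widehat\bfw},T)=O_{\widehat\bfw,\epsilon}((\log T)^{d+\epsilon})$. By the same proposition, these inclines are badly multiplicatively approximable, hence $\kappa$-multiplicatively approximable for every $\kappa>0$, so Theorem~4.1 applies to them. Take $T=t^{d-1}$. Then $T^{-1}t^{d-1}=1$ and
\[
S(\Theta_{\widehat\bfw},T^{1+\epsilon})\ll \bigl((1+\epsilon)(d-1)\log t\bigr)^{d+\epsilon}\ll_{\epsilon}(\log t)^{d+\epsilon}.
\]
This gives \eqref{eq3.30q}.

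For part \textit{(ii)}, assume $\Theta_{\widehat\bfw}$ is $\kappa$-multiplicatively approximable with $\kappa>0$. Proposition~3.2 gives
\[
S(\Theta_{\widehat\bfw},T^{1+\epsilon})\ll T^{\kappa(1+\epsilon)}(\log T)^{d-2}\ll_{\epsilon'} T^{\kappa+\epsilon'}.
\]
Balancing $T^{\kappa}$ against $T^{-1}t^{d-1}$ suggests $T=t^{(d-1)/(1+\kappa)}$. With this choice both terms are $O(t^{\frac{\kappa}{1+\kappa}(d-1)+\epsilon''})$, which gives \eqref{eq1.31*}. For badly multiplicatively approximable inclines, apply \eqref{eq1.31*} with $\kappa$ arbitrarily small; the exponent $\frac{\kappa}{1+\kappa}(d-1)+\epsilon$ then becomes arbitrarily small, which gives \eqref{eq1.31}. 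Alternatively, use \eqref{eq4D} directly with $T=t^{d-1}$. The algebraic case follows from Remark~(ii) after Definition~2.2 (Schmidt's theorem), which makes such inclines badly multiplicatively approximable.

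There is no real obstacle here. The only care needed is bookkeeping: the implicit constants must be uniform in $\bfu\in\UUU(\delta)$, which Theorem~4.1 already provides. One should also check that Theorem~4.1 really does cover the almost-everywhere case, which it does because Proposition~3.1 supplies badly multiplicatively approximable inclines. All the substantive work sits in the Main Theorem itself.
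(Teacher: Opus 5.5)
Your proposal is correct and matches the paper's proof: you combine Theorem~4.1 with Proposition~3.1 and take $T=t^{d-1}$ for (i), and with Proposition~3.2 and take $T=t^{(d-1)/(1+\kappa)}$ for (ii). Your extra checks are bookkeeping the paper leaves implicit; they include that the almost-everywhere inclines satisfy the hypothesis of Theorem~4.1, that $\log T^{1+\epsilon}\ll\log t$, and how the badly approximable case is reached.
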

\begin{proof}
\textit{(i)}
Using Theorem~4.1 and Proposition~3.1, we find 
\begin{equation*}
	\RRR_{\delta}(t;\bfw)\,
	\,\ll\,(\,\log T\,)^{d+\epsilon}
	+T^{-1}t^{d-1}\, ,
\end{equation*}
Taking  $T=t^{d-1}$ here, we obtain \eqref{eq3.30q}.

\textit{(ii)}
Using Theorem~4.1 and Proposition~3.2, we find
\begin{equation*}
	\RRR_{\delta}(t;\bfw)\,\ll\,
	T^{\kappa+\epsilon}\,(\log T)^{d-1}+T^{-1}t^{d-1}
	\ll\,(T^{\kappa}\,+T^{-1}t^{d-1})\, T^{\epsilon} \, .
\end{equation*}
To minimize the sum of the two terms in this estimate, we set 
$T^{\kappa}\,=\,T^{-1}t^{d-1}$. This yields
$T=t^{\frac{d-1}{1+\kappa}}$. Therefore,
$	\RRR_{\delta}(t;\bfw)\,\ll\,
	t^{\frac{\kappa}{1+\kappa}(d-1)+\epsilon} $
and \eqref{eq1.31*} follows.
\end{proof}

We now specialize the shifts $\bfu\in\Rr^d$ in Theorem~4.2 to estimate 
$N^{\mp}(t;\bfw)$.
\begin{theorem}[ \bf{The Hardy--Littlewood Asymptotics in dimension $d$} ]
	We have
	\begin{equation}	
		N^{\mp}(t;\bfw)=
		\LLLL_d^{\,\mp}(t;\bfw)+R^{\mp}(t;\bfw)\,,
		\label{eq1.2*c}	
	\end{equation}
	where  the \textbf{leading terms} are defined by
	\begin{align*}
		\LLLL_d^{\mp}(t;\bfw)=\frac{1}{d!\,w_1\dots w_d}\,\,
		\BBB_d^{\star}\,(t\mp \bfw\centerdot\bfe_{1/2};\,\bfw)\,,	
	\end{align*} 
	and the \textbf{errors}
	\begin{align*}
		\RRR^{\mp}(t;\bfw)=\max\{\,|R^{\mp}(t-0;\bfw)|,\,|R^{\mp}(t+0;\bfw)|\,\},
	\end{align*}
	satisfy the bounds:
	
	\textit{(i)} 
	For almost all $\widehat\bfw\in S^{d-1}_{>0}$, 
	\begin{equation*}
		\RRR^{\mp}(t;\bfw)=O_{\widehat\bfw,\epsilon}\,\big(\,(\log\,t)^{d+\epsilon}\,
		\big)
	\end{equation*}
	for any $\epsilon >0$.
	
	\textit{(ii)} For $\kappa$-multiplicatively approximable inclines 
	$\Theta_{\widehat\bfw}$ with $\kappa>0$, 
	\begin{equation}
		\RRR^{\mp}(t;\bfw)=O_{\bfw}\,\big(\, t^{\frac{\kappa}{1+\kappa} 
			(d-1)+\epsilon}\,\big)\, , 
		\label{eq1.3b}
	\end{equation}
	for any $\epsilon>0$, 
	and for  badly multiplicatively approximable inclines $\Theta_{\widehat\bfw}$,
	\begin{equation}
		\RRR^{\mp}(t;\bfw)=O_{\bfw,\epsilon}\big(\, t^{\epsilon} \,\big)
		\label{eq1.3c}
	\end{equation}
	for any $\epsilon >0$.
	In particular,  \eqref{eq1.3c} holds if $w_1, 
	\dots, w_d$ are positive algebraic numbers linearly independent over $\Qq$.
\end{theorem}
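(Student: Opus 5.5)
The plan is to obtain Theorem~4.3 as a direct specialization of Theorems~4.1 and~4.2. The key observation is that, for suitably chosen half-integer shifts $\bfu$ and suitably shifted $t$, the function $N(t;\bfw,\bfu)$ counts exactly the integer points of $\Delta^{\mp}(t;\bfw)$. These shifts also have $((\bfu))=0$, so the leading term $\LLLL_d(t;\bfw,\{\bfu\})$ becomes the required $\LLLL_d^{\mp}$. Fix $\delta=1/4$; every shift $\bfu\equiv\bfe_{1/2}\pmod{\Zz^d}$ lies in $\UUU(\delta)$, so Theorem~4.2 applies to it uniformly.

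\emph{Open simplex.} Take $\bfu=\bfe_{1/2}$. Then
$$\Delta^{-}(t';\bfw,\bfe_{1/2})=\{\bfx: x_j>1/2,\ \bfw\centerdot\bfx<t'+\bfw\centerdot\bfe_{1/2}\}.$$
For $\bfx\in\Zz^d$, the condition $x_j>1/2$ is equivalent to $x_j\ge1$, i.e.\ to $x_j>0$. Choosing $t'=t-\bfw\centerdot\bfe_{1/2}$ therefore gives the identity
$$N^{-}(t;\bfw)=N(t-\bfw\centerdot\bfe_{1/2};\bfw,\bfe_{1/2}).$$
Since $((1/2))=\{1/2\}-1/2=0$, the leading term of Theorem~4.1 at this point is $\frac{1}{d!\,w_1\cdots w_d}\BBB^{\star}_d(t-\bfw\centerdot\bfe_{1/2};\bfw)=\LLLL_d^{-}(t;\bfw)$. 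Hence $R^{-}(t;\bfw)=R(t-\bfw\centerdot\bfe_{1/2};\bfw,\{\bfe_{1/2}\})$. Because $t\mapsto t-\bfw\centerdot\bfe_{1/2}$ is a translation, it preserves one-sided limits, so
$$\RRR^{-}(t;\bfw)\le \RRR_{1/4}(t-\bfw\centerdot\bfe_{1/2};\bfw).$$

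\emph{Closed simplex.} Take $\bfu=-\bfe_{1/2}$. For integers, $x_j>-1/2$ is equivalent to $x_j\ge0$. With $t'=t+\bfw\centerdot\bfe_{1/2}$, the region becomes $\{x_j>-1/2,\ \bfw\centerdot\bfx<t\}$. The only remaining mismatch is strict versus non-strict inequality in $\bfw\centerdot\bfx<t$ against $\bfw\centerdot\bfx\le t$. I remove it by a right limit: since only finitely many lattice points lie near the boundary, $\bfw\centerdot\bfx\le t$ holds iff $\bfw\centerdot\bfx<t+\alpha$ for all small $\alpha>0$. This gives
$$N^{+}(t;\bfw)=N(t+\bfw\centerdot\bfe_{1/2}+0;\bfw,-\bfe_{1/2}).$$
Again $((-1/2))=\{-1/2\}-1/2=0$, so the leading term is $\LLLL_d^{+}(t;\bfw)$, which is continuous in $t$. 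Consequently $R^{+}(t\pm0;\bfw)=R(t+\bfw\centerdot\bfe_{1/2}\pm0;\bfw,\{-\bfe_{1/2}\})$, and $\RRR^{+}(t;\bfw)\le\RRR_{1/4}(t+\bfw\centerdot\bfe_{1/2};\bfw)$.

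\emph{Conclusion.} The bounds (i) and (ii) of Theorem~4.2 are at most $(\log t)^{d+\epsilon}$, $t^{\frac{\kappa}{1+\kappa}(d-1)+\epsilon}$ and $t^{\epsilon}$, respectively. Each is unchanged in order under the bounded shift $t\mapsto t\pm\bfw\centerdot\bfe_{1/2}$ for large $t$, which yields (i), \eqref{eq1.3b} and \eqref{eq1.3c}. The algebraic case follows from Remark~(ii) after Definition~2.2. The only delicate point is the bookkeeping of left and right limits in the closed case, and it is exactly handled by the definition of $\RRR$ as the maximum over both one-sided limits.
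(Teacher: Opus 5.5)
Your proposal is correct and takes essentially the paper's route: the paper also specializes Theorem~4.2 at $\bfu=\bfe_{1/2}$, via the Invariance Principle (Proposition~4.1), and uses $((\bfe_{1/2}))=0$ to identify the leading terms. Your right-limit treatment of the closed simplex with $\bfu=-\bfe_{1/2}$ is the same bookkeeping as the paper's identity $N^{-}(t;\bfw)=N^{+}(t-\bfw\centerdot\bfe_1;\bfw)-\tau(t)$ combined with $\tau(t\mp 0)=0$ from Lemma~4.1.
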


Before proving this result, we need to carefully consider the relationship 
between  $N(t;\bfw,\bfu)$ and $N^{\mp}(t;\bfw)$.
Let us put
\begin{equation*}
	\tau (t)=\#\,(\bfx\in\Zz^d_{>0}:\,\bfw\centerdot\bfx=t),\quad t>0.
	\end{equation*}
Clearly, $\tau (t)=0$  except when
$t=t_{\bfm}=\bfw\centerdot\bfm$ for $\bfm\in\Zz^d_{>0}$.
\begin{lemma}
	For each $t\in\Rr_{>0}$, we have  $\tau(t\mp 0)=0$
	and $\tau(t)\le 2\RRR^{-}(t;\bfw)$.
\end{lemma}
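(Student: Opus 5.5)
The statement has two parts. The first, $\tau(t\mp0)=0$, is purely local. The second, $\tau(t)\le 2\RRR^{-}(t;\bfw)$, comes from the jump of $N^{-}$ at $t$ together with the continuity of the leading term.

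For the first part, observe that the set $\{t_{\bfm}=\bfw\centerdot\bfm:\bfm\in\Zz^d_{>0}\}$ is discrete in $\Rr_{>0}$. Indeed, for $t_{\bfm}\le t+1$ each coordinate satisfies $m_j\le (t+1)/w_j$, so only finitely many $\bfm$ contribute. Hence for every $t>0$ there is $\alpha_0>0$ such that no $t_{\bfm}$ lies in $(t-\alpha_0,t)\cup(t,t+\alpha_0)$. Therefore $\tau(t\mp\alpha)=0$ for $0<\alpha<\alpha_0$, and the one-sided limits $\tau(t\mp0)$ both vanish.

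For the second part, express $\tau$ through the jump of $N^{-}$:
\begin{equation*}
	N^{-}(t+0;\bfw)-N^{-}(t-0;\bfw)=\tau(t).
\end{equation*}
This holds because $N^{-}(t+\alpha;\bfw)$ counts the points of $\Zz^d_{>0}$ with $\bfw\centerdot\bfx<t+\alpha$, and $N^{-}(t-\alpha;\bfw)$ those with $\bfw\centerdot\bfx<t-\alpha$. For $\alpha<\alpha_0$ their difference is exactly the number of points with $\bfw\centerdot\bfx=t$.

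The leading term $\LLLL_d^{-}(t;\bfw)$ is a polynomial in $t$, hence continuous, so $\LLLL_d^{-}(t+0;\bfw)=\LLLL_d^{-}(t-0;\bfw)$. Subtracting the relation $N^{-}=\LLLL^{-}_d+R^{-}$ from \eqref{eq1.2*c} at $t+0$ and at $t-0$ gives
\begin{equation*}
	\tau(t)=R^{-}(t+0;\bfw)-R^{-}(t-0;\bfw)\le |R^{-}(t+0;\bfw)|+|R^{-}(t-0;\bfw)|\le 2\RRR^{-}(t;\bfw).
\end{equation*}
This is the claimed bound.

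There is no real obstacle in this argument. The only point needing care is that the one-sided limits of $N^{-}$ and $R^{-}$ exist, and this follows from the local finiteness of the set $\{t_{\bfm}\}$ established in the first part.
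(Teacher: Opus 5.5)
Your proof is correct and follows the same route as the paper. You show $\tau(t\mp\alpha)=0$ for small $\alpha>0$, then write $\tau(t)=N^{-}(t+0;\bfw)-N^{-}(t-0;\bfw)=R^{-}(t+0;\bfw)-R^{-}(t-0;\bfw)$ using continuity of the polynomial leading term, and bound this by $2\RRR^{-}(t;\bfw)$; you also justify the local finiteness of $\{t_{\bfm}\}$, which the paper leaves implicit.
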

\begin{proof}
	For each 
	$t\in\Rr_{>0}$ and all sufficiently small $\alpha>0,\, \tau(t\mp\alpha)=0$.
	This proves the first equality. We have
	$$\tau(t)=N^{-}(t + 0)-N^{-}(t - 0)
	=R^{-}(t + 0) - R^{-}(t - 0).$$
	and the second inequality follows since $\tau(t)\ge 0$.
\end{proof}

\begin{proposition}[\bf{The Invariance Principle}]
	For $t>\bfw\centerdot\bfe_1$, the quantity
	\begin{align}
		N(t-\bfw\centerdot\{\bfu\};\bfw,\{\bfu\})
		\label{eq4.23a}
	\end{align}
	is independent of $\bfu\in\UUU(\delta)$, and we have 
\begin{equation}
	\left.
	\begin{aligned}	
		&N^{-}(t;\bfw)=N(t-\bfw\centerdot\{\bfu\};\bfw,\{\bfu\})=
		N^{+}(t-\bfw\centerdot\bfe_1;\bfw)-\tau (t)\,.	
\\
&R^{-}(t;\bfw)\,
=\,R(t-\bfw\centerdot\{\bfu\};\bfw,\{\bfu\})
=\,R^{+}(t-\bfw\centerdot\bfe_1;\bfw)-\tau (t)\,.	
\\
	&\RRR^{-}(t;\bfw)\,
	=\RRR(t-\bfw\centerdot\{\bfu\};\bfw,\{\bfu\})
	=
	\RRR^{+}(t-\bfw\centerdot\bfe_1;\bfw)\,.	
\end{aligned}
\quad\right\}
\label{eq4.23abbb}
\end{equation}	
	\end{proposition}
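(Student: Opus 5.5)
The plan is to verify each identity in \eqref{eq4.23abbb} by direct counting, with no analysis needed: first for the counting functions, then for the error terms by checking that the leading terms coincide, and finally for $\RRR$ by taking one-sided limits using Lemma~4.1.

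\emph{Step 1 (counting functions).} By the description of $\Delta^{-}(t;\bfw,\bfu)$, the quantity $N(t-\bfw\centerdot\{\bfu\};\bfw,\{\bfu\})$ counts the $\bfx\in\Zz^d$ with $x_j>\{u_j\}$ for $j\in[d]$ and
$\bfw\centerdot\bfx<t-\bfw\centerdot\{\bfu\}+\bfw\centerdot\{\bfu\}=t$.
For $\bfu\in\UUU(\delta)$ we have $\{u_j\}\in[\delta,1-\delta]\subset(0,1)$, so for an integer $x_j$ the condition $x_j>\{u_j\}$ is equivalent to $x_j\ge1$, i.e. to $x_j>0$. Hence the count equals $N^{-}(t;\bfw)$, which in particular does not depend on $\bfu\in\UUU(\delta)$.

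For the second equality, put $s=t-\bfw\centerdot\bfe_1>0$; this is where the hypothesis $t>\bfw\centerdot\bfe_1$ is used. The substitution $\bfy=\bfx+\bfe_1$ maps the set $\{x_j\ge0,\ \bfw\centerdot\bfx\le s\}$ bijectively onto $\{y_j\ge1,\ \bfw\centerdot\bfy\le t\}$. Therefore
\begin{equation*}
N^{+}(t-\bfw\centerdot\bfe_1;\bfw)=N^{-}(t;\bfw)+\tau(t).
\end{equation*}

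\emph{Step 2 (error terms).} It suffices to show that the three leading terms agree; the relations for $R$ then follow by subtracting them from the relations of Step 1.
\begin{itemize}
\item By the definition in Theorem~4.1 and $((\bfu))=\{\bfu\}-\bfe_{1/2}$, the argument of $\BBB^{\star}_d$ in $\LLLL_d(t-\bfw\centerdot\{\bfu\};\bfw,\{\bfu\})$ is $t-\bfw\centerdot\{\bfu\}+\bfw\centerdot((\bfu))=t-\bfw\centerdot\bfe_{1/2}$. This is exactly the argument appearing in $\LLLL^{-}_d(t;\bfw)$.
\item The argument in $\LLLL^{+}_d(t-\bfw\centerdot\bfe_1;\bfw)$ is $t-\bfw\centerdot\bfe_1+\bfw\centerdot\bfe_{1/2}=t-\bfw\centerdot\bfe_{1/2}$, which is the same.
\end{itemize}
Hence all three leading terms coincide, and the second line of \eqref{eq4.23abbb} follows.

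\emph{Step 3 (the quantities $\RRR$).} Apply the relations of Step 2 at the points $t\mp\alpha$ and let $\alpha\downarrow0$. Note that the shift $t\mapsto t-\bfw\centerdot\{\bfu\}$ commutes with one-sided limits. By Lemma~4.1, $\tau(t\mp0)=0$, so
\begin{equation*}
R^{-}(t\mp0;\bfw)=R(t-\bfw\centerdot\{\bfu\}\mp0;\bfw,\{\bfu\})=R^{+}(t-\bfw\centerdot\bfe_1\mp0;\bfw).
\end{equation*}
Taking the maximum of the absolute values of the two one-sided limits gives the third line of \eqref{eq4.23abbb}.

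The only delicate point is keeping track of the one-sided limits. The term $\tau(t)$ makes $R^{-}$ and $R^{+}$ differ at the jump points themselves, and it disappears only after passing to $\RRR$ via Lemma~4.1. Everything else is the elementary observation $\{u_j\}\in(0,1)$ combined with the identity $((\bfu))=\{\bfu\}-\bfe_{1/2}$.
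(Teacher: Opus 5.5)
Your proof is correct and follows the paper's argument. It uses the same counting observation ($\{u_j\}\in(0,1)$ for $\bfu\in\UUU(\delta)$, so the hyperplanes $x_j=\{u_j\}$ contain no integer points), the same cancellation of leading terms via $((\bfu))=\{\bfu\}-\bfe_{1/2}$, and Lemma~4.1 for the one-sided limits. You also spell out two steps the paper leaves implicit: the shift $\bfy=\bfx+\bfe_1$ behind the $N^{+}$ identity, and the explicit check that all three leading-term arguments equal $t-\bfw\centerdot\bfe_{1/2}$.
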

	\begin{proof}
		We have
\begin{align*}
	N(t-\bfw\centerdot\{\bfu\};\bfw,\{\bfu\})
=\#\,\{\bfx\in\Zz^d: \{u_j\}<x_j,\,j\in[d],\,\,\bfw\centerdot\bfx<t \}.	
	\label{eq4.23c}
\end{align*}
This shows that the quantity \eqref{eq4.23a} is independent of 
$\bfu\in\UUU(\delta)$, since the hyperplanes 
$\{\bfx: x_j=\{u_j\}\}$ for $j\in[d]$ and $\bfu\in\UUU(\delta)$  contain no 
integer points. 
This also proves the first line of equalities in \eqref{eq4.23abbb}.
	
	Substituting the asymptotics \eqref{eq1.2*c} and \eqref{eq3.2*c} into 
	the first line of equalities in \eqref{eq4.23abbb}, we see
	 that the leading terms cancel by \eqref{eq2.22a}, and we obtain the 
second line in \eqref{eq4.23abbb}.
	Using Lemma~4.1, we find
	\begin{align*}
		R^{-}(t\mp 0;\bfw)
		=
		R(t-\bfw\centerdot\{\bfu\}\mp 0;\bfw,\{\bfu\})
		=
		R^{+}(t-\bfw\centerdot\bfe_1\mp 0;\bfw)	
	\end{align*}
	and the third line in \eqref{eq4.23abbb} follows.
\end{proof}

	\begin{proof}[\bf{Proof of Theorem~4.3.}]
		We prove that Theorems 4.2 and 4.3 are equivalent.
		Setting $\bfu=\bfe_{1/2}\in\UUU(\delta)$ in \eqref{eq4.23abbb}, we conclude 
		that Theorem~4.2 implies Theorem~4.3. On the other hand,
	\begin{align*}
		\RRR(t;\bfw,\{\bfu\})
		=\RRR^{-}(t+\bfw\centerdot\{\bfu\};\bfw)
		=\RRR^{+}(t-\bfw\centerdot(\bfe_1-\{\bfu\});\bfw)	
	\end{align*}
	by the third line in \eqref{eq4.23abbb}.
	This proves that Theorem~4.2 follows from Theorem~4.3.
	This also shows that the bounds for $\RRR^{-}(t;\bfw)$ and $\RRR^{+}(t;\bfw)$ 
	in Theorem~4.3 are equivalent to each other.
	\end{proof}

\enlargethispage{4\baselineskip}

\textbf{Examples.}
We conclude our discussion of the results with examples  showing that the exponent
$\frac{\kappa}{1+\kappa} (d-1)$ in the bounds \eqref{eq1.3b} and \eqref{eq1.31*}  
in Theorems 4.2 and 4.3 is  best possible.
The examples are based on a well-known construction from coding theory  
(BCH codes); see, for 
example, \cite[Sec. 11]{14ab}. The only difference is that, instead of finite 
fields as in coding theory, we consider algebraic number fields.

Let $\Ff$ be a real algebraic number field of degree $q=[\Ff:\Qq]$.
Choose positive numbers $\alpha_1,\dots,\alpha_q$ in $\Ff$ linearly independent
over $\Qq$ and define the sequence
\begin{equation}
	w_j=\alpha_1 + j\,\alpha_2 +\cdots + j^{q-1}\alpha_q, \quad j\in\Zz_{>0}.
	\label{eq4a}
\end{equation}
Now define the weights $\bfw =(w_1,\dots,w_d)$ in dimension 
$d=p(q-1)+1$ for $p\in\Zz_{>0}$.
\begin{lemma}
	With the above notation, we have the following: 

(i) Any $q$ terms in the sequence \eqref{eq4a} are linearly independent over $\Qq$.
	
(ii) $\Gamma=\{\bfx\in\Zz^d: \bfw\centerdot\bfx=0\}$
is a sublattice (subgroup of $\Zz^d$) of rank $d-q=(p-1)(q-1)$.
\end{lemma}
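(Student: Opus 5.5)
Part (i) reduces to a Vandermonde argument. Write $w_j=\sum_{i=1}^q j^{i-1}\alpha_i$. Take $q$ distinct indices $j_1,\dots,j_q$ and suppose a rational relation $\sum_{k=1}^q r_k w_{j_k}=0$ with $r_k\in\Qq$. Substituting the expression for $w_j$ gives
\begin{equation*}
\sum_{i=1}^q\Big(\sum_{k=1}^q r_k\, j_k^{\,i-1}\Big)\alpha_i=0 .
\end{equation*}
Each coefficient $\sum_k r_k j_k^{\,i-1}$ is rational, and $\alpha_1,\dots,\alpha_q$ are linearly independent over $\Qq$. Hence every coefficient vanishes: $V^{T}r=0$, where $V=[j_k^{\,i-1}]$ is the $q\times q$ Vandermonde matrix. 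Since the $j_k$ are distinct, $\det V\neq 0$, so $r=0$. This proves (i).

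For (ii), note first that $\Gamma$ is the kernel of the homomorphism $\phi:\Zz^d\to\Rr$, $\bfx\mapsto\bfw\centerdot\bfx$. It is therefore a subgroup of $\Zz^d$ and is free of some rank $r$.

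To compute $r$, pass to $\Qq$-coefficients. Since $\Gamma\otimes\Qq=\ker(\phi_\Qq:\Qq^d\to\Rr)$, the rank of $\Gamma$ equals $d-\dim_\Qq \phi_\Qq(\Qq^d)$. The image $\phi_\Qq(\Qq^d)$ is the $\Qq$-span of $w_1,\dots,w_d$.

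This span lies in the $q$-dimensional space $\Qq\alpha_1+\cdots+\Qq\alpha_q$, because each $w_j$ is a combination of the $\alpha_i$ with integer coefficients. On the other hand, $d=p(q-1)+1\ge q$ for $p\ge1$, so by (i) any $q$ of the $w_j$ are independent and the span has dimension at least $q$. Hence the dimension is exactly $q$ (this uses $d\ge q$). Therefore $\operatorname{rank}\Gamma=d-q=p(q-1)+1-q=(p-1)(q-1)$.

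There is no real obstacle here. The only points needing care are that the rank of a kernel of a map from $\Zz^d$ is computed after tensoring with $\Qq$, and that $d\ge q$ holds so that (i) can be applied to $w_1,\dots,w_q$. Note that the fact that $\Ff$ is a field of degree $q$ is not needed for (ii) beyond supplying $q$ independent $\alpha_i$.
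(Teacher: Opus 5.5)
Your proof is correct and essentially the same as the paper's. Part (i) is the identical Vandermonde argument. For (ii), the paper computes $\dim_\Qq\{\bfc\in\Qq^d:\bfw\centerdot\bfc=0\}=d-q$ directly by splitting the relation into the $q$ equations $\sum_j j^k c_j=0$, noting that any $q$ of the unknowns are determined by the rest through an invertible Vandermonde matrix; this is just the kernel side of your rank--nullity count via the $\Qq$-span of the $w_j$, and your identification $\operatorname{rank}\Gamma=\dim_\Qq(\Gamma\otimes\Qq)$ is the fact the paper uses when it says a rational subspace of dimension $s$ meets $\Zz^d$ in a subgroup of rank $s$.
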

\begin{proof}\textit{(i)}
Write the equation $\sum\nolimits_{i=1}^q\,c_i\,w_{j_i}=0$
with $j_1<\cdots<j_q$ and rational $c_1,\dots,c_q$.
Since $\alpha_1,\dots,\alpha_q$ are linearly independent over $\Qq$,
the equation can be separated into $q$ equations in $q$ unknowns:
$\sum\nolimits_{i=1}^q\,j_i^k\,c_i=0$ for $k=0,\dots,q-1$. 
The matrix of these equations is a nonzero Vandermonde matrix. 
This proves the first part of the lemma.

\textit{(i)}
Write the equation $\sum\nolimits_{j=1}^d\,c_j\,w_j=0$ with rational
$c_1,\dots,c_d$.
Taking into account the first part of the lemma, we conclude that
the equation can be separated into $q$ equations in $d$ unknowns:
$\sum\nolimits_{j=1}^d\,j^k\,c_j=0$ for $k=0,\dots,q-1$. Any  $d-q$ 
coefficients $c_j$ for $j\in[d]$ can be chosen arbitrarily, 
and the remaining $q$ coefficients are then uniquely determined by the equations.
This proves that $G=\{\bfc\in\Qq^d: \bfw\centerdot\bfc=0\}$ is a rational 
subspace of dimension $d-q$. Clearly, for any rational subspace 
$V\subset\Rr^d$ of dimension $s$, the intersection $V\cap\Zz^d$ is a subgroup of 
$\Zz^d$ of rank $s$. This proves the second part of the lemma.
\end{proof}

We now consider the simplices $\Delta^{-}(t;\bfw)\subset\Rr^d$ with  weights
$\bfw =(w_1,\dots,w_d)$ defined in \eqref{eq4a}. 
The corresponding inclines $\theta_{j,l}=\frac{w_l}{w_j}$ for $l,j\in[d],$ are 
algebraic numbers in the field $\Ff$. For each $j\in[d]$, we introduce a partition 
$$[d]\setminus\{j\}=\bigsqcup\nolimits_{\,i=1}^{\, 
p}\,E_i(j),\quad\#\{E_i(j)\}=q-1\,. $$
By Lemma~4.2\textit{(i)}, for $j\in[d]$ and $i\in[p]$, the numbers 
$1,\,\theta_{j,l}$ for $l\in E_i(j)$ 
are real algebraic numbers linearly independent over $\Qq$.
Therefore, by Schmidt's theorem, we have
\begin{equation*}
	m^{1+\epsilon_0}\,\prod\nolimits_{l\in E_i(j)}\,\, \langle 
	\theta_{j,l}\,\, m\rangle >c, 
	\quad j\in [d],\quad i\in[p],
\end{equation*}
for any $\epsilon_0>0$ and some $c>0$ (if $q=2$, one may take $\epsilon_0=0$).
Multiplying these inequalities over all $i\in[p]$, we obtain
\begin{equation*}
	m^{p+p\epsilon_0}\,\prod\nolimits_{l\in [d],\,l\ne j}\,\, \langle 
	\theta_{j,l}\,\, m\rangle >c^p, 
	\quad j\in [d]\,,
\end{equation*}
By Definition~2.2, this means that the inclines $\Theta_{\widehat\bfw}$ are
$\kappa$-multiplicatively approximable  with $\kappa=p-1+p\epsilon_0$.

Theorem~4.3 now implies the upper bound 
\begin{equation*}
	\RRR^{-}(t;\bfw)=O_{\bfw}\,\big(\, t^{(p-1)(q-1)+\epsilon}\,\big) 
\end{equation*}
for any $\epsilon>0$,
since the exponent $\frac{\kappa}{1+\kappa} (d-1)=(p-1)(q-1)+\epsilon_1$ for any 
$\epsilon_1>0$ (with $\epsilon_1=0$ if $q=2$).
On the other hand,  from Lemma~4.2\textit{(ii)} we immediately derive the following 
asymptotic formula: 
$$\tau(t_{\bfm})=v\, \,t_{\bfm}^{\,(p-1)(q-1)} 
\big(\,1+O(t_{\bfm}^{-1})\,\big),\quad 
t_{\bfm}\to\infty\,, $$
for some constant $v>0$. This formula together with Lemma~4.1 implies the lower 
bound  
\begin{equation*}
	\RRR^{-}(t_{\bfm};\bfw)\ge (v/2)\, \,t_{\bfm}^{\,(p-1)(q-1)} 
	\big(\,1+O(t_{\bfm}^{-1})\,\big),\quad 
	t_{\bfm}\to\infty\,.
\end{equation*}
A comparison of  the upper and lower bounds shows 
that the exponent $(p-1)(q-1)$ cannot 
be improved. Such examples with $\kappa\ge1$ can be constructed  
in all dimensions $d\ge3$.


\section{Simplices and their Fourier transform}
\label{sec5}

In this section, we compute  the Fourier expansion 
\begin{equation}
	N(t;\bfw, \bfu)\thicksim\sum\nolimits_{\bfm\in\Zz^d}\,
	N_{\bfm}(t;\bfw)\,e^{\,2i\pi\bfm\centerdot\bfu}, 
	\label{eq3.0}
\end{equation}
for $N(t;\bfw,\bfu)$ as a periodic function of $\bfu\in\Rr^d$. 
The result is stated below in Proposition~5.2.
Here and in what follows, the $\sim$ sign means that a Fourier series 
is considered simply as the list of its Fourier harmonics, ignoring all 
questions of convergence. 

Let $\chi(\EEE,\bfx)$ for $\bfx\in\Rr^d$ denote the indicator function of a 
subset $\EEE\subseteq\Rr^d$.
The number of integer points in the shifted simplex \eqref{eq1.22} can be 
written as
\begin{equation}
	N(t;\bfw,\bfu)=\sum\nolimits_{\bfm\in\Zz^d}\,\chi\, (\Delta^{-} 
	(t;\bfw), \bfm -\bfu), 
	\label{eq3.1}
\end{equation}
Clearly,   $N(t;\bfw,\bfu)$ as a function of $\bfu$ belongs to 
$L^{\infty}(\Rr^d/\Zz^d)$.

For a function $f\in L^1(\Rr^d)$, the sum $\sum\nolimits_{\bfm\in\Zz^d} f(\bfm 
-\bfu)$ is a well-defined periodic function belonging to  $L^1(\Rr^d/\Zz^d)$, and 
the Poisson summation formula gives its Fourier series
\begin{equation*}
\sum\nolimits_{\bfm\in\Zz^d} f(\bfm 
+\bfu)\,\thicksim\,\sum\nolimits_{\bfm\in\Zz^d}
\widetilde f(\bfm)\,e^{2i\pi\bfm\centerdot\bfu}\, ,	
\end{equation*}
where 
$\widetilde 
f(\bfy)=\int\nolimits_{\Rr^d}e^{-2i\pi\bfy\centerdot\bfx}\,f(\bfx)\,d\bfx$ 
is the Fourier transform  (see \cite[Chap.VII, Thm. 2.4]{27}).

Applying this formula to $f(\bfx)=\chi\, (\Delta^{-} 
(t;\bfw), \bfx -\bfu)$, we obtain the Fourier coefficients in \eqref{eq3.0}
\begin{align} 
	N_{\bfm}(t;\bfw)\,&=
	\widetilde
	\chi\, (\Delta^{-} 
	(t), -\bfm)=
	\int\nolimits_{\Delta^{-} (t;\bfw)}\,e^{\,2i\pi\bfm\centerdot\bfu}\, 
	d\bfu.
	\notag
	\\
	&=\,
	\frac{t^d}{w_1\dots w_d}\, 
	\,\XXX \left(\frac{m_1}{w_1}\,t,\dots, \frac{m_d}{w_d}\,t\right),
	\label{eq3.6}
\end{align}
where
\begin{equation}
	\XXX (\bfy)=
	\int\nolimits_{\varDelta}\,e^{\,2i\pi\bfy\centerdot\bfx}\, d\bfx\, 
	,\quad
	\bfy\in\Rr^d\, ,
	\label{eq3.7}
\end{equation} 
and $\varDelta =\{\bfx\in\Rr^d_{>0} : \, x_1+\cdots +x_d <1\}$ 
is the standard simplex in $\Rr^d$. Note that 
$\XXX (0)=\text{vol}\,\varDelta=(d\, !)^{-1} $.

The integral \eqref{eq3.7} is a smooth (indeed holomorphic) function
with a very specific asymptotic behavior: 
outside  the hyperplanes 
$\{\bfy: y_j=0\},\, j\in [d],$ and $\{\bfy : y_j= y_l\}, j\ne l$, 
the integral decays as 
$\bfy\to\infty$, while on the hyperplanes it has polynomial growth.
\\

\textbf{Definition 5.1.}
A point $\bfy\in\Rr^d$ is called 
\textit {nondiagonal} if it lies outside  the hyperplanes 
$\{\bfy: y_j=y_l\},\,j\ne l$ for all $j,l$ in $[d]$.

\textbf{Remark.}
Clearly,  points 
$$\Big(\,\frac{m_1}{w_1},\dots, 
\frac{m_d}{w_d}\,\Big)\, t,\quad \bfm 
=(m_1,\dots,m_d)\in\Zz^d\setminus \{\bf0\},$$ 
are nondiagonal if the quotients $\theta_{j,l}=\frac{w_l}{w_j}$ for $j\ne l$ 
are irrational.  
Therefore, in the case of irrational inclines $\Theta_{\widehat\bfw}$,  
we do not need to investigate the integral \eqref{eq3.7}
at ``diagonal points"  on the hyperplanes $\{\bfy :y_j=y_l\}$ for $j\ne l$,
since such points do not contribute to the Fourier coefficients \eqref{eq3.6}.
\\

The following notation will be used in our calculations. 
For a subset $J\subseteq [d]$, we write
$J'=[d]\setminus J$ and $|J|=\#\,J$. We set
\begin{equation*}
	\left.
\begin{aligned}
	&\Rr^J=\{\bfy=(y_1, \dots, y_d)\in\Rr^d: y_j\in\Rr\,\,\text{if}\,\, 
	j\in J 
	\,\,\text{and}\,\, y_j=0\,\, \text{if}\, \, j\in J'\},
	\\
	&\Rr^J_{\ne 0}=\{\bfy=(y_1, \dots, y_d)\in\Rr^d: y_j\ne0\,\,\text{if}\,\, 
	j\in J 
	\,\,\text{and}\,\, y_j=0\,\, \text{if}\, \, j\in J'\},
	\\
	&\Rr^J_{> 0}=\{\bfy=(y_1, \dots, y_d)\in\Rr^d: y_j>0\,\,\text{if}\,\, 
	j\in J 
	\,\,\text{and}\,\, y_j=0\,\, \text{if}\, \, j\in J'\}.
     \end{aligned}
\quad\right\}	
\end{equation*}
In particular, $\Rr^{\emptyset}_{\ne0}=\Rr^{\emptyset}_{>0}=\{\bf0\}$, 
$\Rr^{[d]}=\Rr^d,\,\,\Rr^{[d]}_{\ne0}=\Rr^d_{\ne0},\,\,\Rr^{[d]}_{>0}=\Rr^d_{>0}$.
The subsets $\Rr_{\ne0}^J$ and $\Rr_{>0}^J$ for $\,J\subseteq [d]$ form  
partitions of 
$\Rr^d$ and 
$\Rr_{\ge0}^d$:
\begin{equation*}
	\Rr^d=\bigsqcup\nolimits_{J\subseteq [d]}	\Rr^J_{\ne0}, \quad 
\Rr_{\ge0}^d=\bigsqcup\nolimits_{J\subseteq [d]}	\Rr^J_{>0}\, .	
\end{equation*}
Hence,
$1=\sum\nolimits_{J\subseteq [d]}\,\chi (\Rr_{\ne0}^J,\,\bfy)$ and
$\chi (\Rr_{\ge0}^d,\bfy)=\sum\nolimits_{J\subseteq [d]}\,\chi 
(\Rr_{>0}^J,\,\bfy).$

Let us put   
\begin{align}
	\Zz^J=\Zz^d\bigcap\Rr^J,\quad
	\Zz^J_{\ne0}=\Zz^d\bigcap\Rr_{\ne0}^J\, ,\quad
	\Zz^J_{>0}=\Zz^d\bigcap\Rr_{>0}^J\, ,
	\label{eq2.02}
\end{align}
In particular, $\Zz^{\emptyset}_{\ne0}=\Zz^{\emptyset}_{>0}=\{0\},\,\, 
\Zz^{[d]}=\Zz^d,\,\,
\Zz^{[d]}_{\ne0}=\Zz^d_{\ne0}$, and $\Zz^{[d]}_{>0}=\Zz^d_{>0}$.
We have the partitions
\begin{equation}
	\Zz^d=\bigsqcup\nolimits_{J\subseteq [d]}	\Zz^J_{\ne0}, \quad 
	\Zz_{\ge0}^d=\bigsqcup\nolimits_{J\subseteq [d]}	\Zz^J_{>0}\, .	
	\label{eq3.10*}
\end{equation}
Hence,
\begin{align*}
\chi 
(\Zz^d,\bfy)=\sum\nolimits_{J\subseteq [d]}\,\chi (\Zz_{\ne0}^J,\,\bfy),\quad
\chi 
(\Zz_{\ge0}^d,\bfy)=\sum\nolimits_{J\subseteq [d]}\,\chi (\Zz_{>0}^J,\,\bfy).
\end{align*}
Define also subsets  $Z_j\subset\Zz^d$ for $j\in[d]$ by 
\begin{align*}
	Z_j=&\bigcup\nolimits_{J\supseteq\{j\}}\, \Zz_{\ne0}^J\notag
	\\
	=&\{\bfm=(m_1,\dots,m_d)\in\Zz^d: m_j\in\Zz_{\ne 0}\,\, \text{and}\,\,
	m_l\in\Zz\,\,\, \text{if}\,\,\, l\ne j\}.
\end{align*}
Hence,
\begin{equation}
\chi (Z_j,\bfy)=\sum\nolimits_{J\supseteq\{j\}}\, \chi (\Zz_{\ne0}^J, \bfy)\, 
,	
\label{eq2.003}
	\end{equation}
since the subsets $\Zz_{\ne0}^J$ are disjoint for different $J\subseteq [d]$.	

\begin{proposition}
	Let $\bfy\in\Rr^J_{\ne0}$ for $J\ne\emptyset$ be a nondiagonal point; then
	\begin{equation}
		\XXX (\bfy)=X_1(\bfy)+X_2(\bfy)\, ,
		\label{eq3.29}
	\end{equation}
	where	
	\begin{align*}
		X_1(\bfy)=&(-1)^{|J|}
		\sum_{n=|J|}^d\left(\frac{1}{2i\pi}\right)^n
		\frac{1}{(d-n)!} 
		\sum_{\bfn\in\Zz_{>0}^J: |\bfn|_1=n}
		\prod\nolimits_{j\in J}\,\frac{1}{y_j^{n_j}}\, ,
	\\
	X_2(\bfy)=	&\left(\frac{1}{2i\pi}\right)^d\,
		\sum_{j\in J}\,\,
		\frac{e^{2i\pi y_j}}{ y_j\prod\nolimits_{l\in [d], l\ne j}(y_j -y_l)}
		\, .
	\end{align*}
	Here $|\bfn|_1=\sum\nolimits_{j\in J}\,n_j$ for 
	$\bfn\in\Zz_{>0}^J$.
\end{proposition}
Before proceeding to the proof of Proposition~5.1 below, we need  the following 
auxiliary results:
	\\
	
\textit{(i)}
For a subset $J\subset [d],\, J\ne\emptyset,$ and its complement $J'$, we write 
$\bfx=\bfx_J 
+\bfx_{J'}$, where
$\bfx_J\in\Rr^J,\, \bfx_{J'}\in\Rr^{J'}$, and 
$\varDelta_J=\{\bfx\in\Rr^J_{>0} :\,\sum\nolimits_{j\in J}x_j 
<1\}$. 	With this notation,  the integral \eqref{eq3.7} 
can be written as
\begin{equation}
	\XXX (\bfy)=
	\int\nolimits_{\varDelta_{J'}}\,d\bfx_{J'}\,
	e^{\, 2i\pi\,\bfy_{J'}\centerdot\bfx_{J'}}\,\beta(\bfx_{J'})^{|J|}\,\,
	 \XXX_J(\,\beta(\,\bfx_{J'})\,\bfy_J\,)\, ,
	\label{eq3.12}
\end{equation}
where $\beta(\bfx_{J'})=1-\bfe_1\centerdot\bfx_{J'}=1-\sum\nolimits_{j\in J'}\, x_j$
and
\begin{equation}
	\XXX_J(\bfy_J)=
	\int\nolimits_{\varDelta_J}\,e^{\,2i\pi\bfy_J\centerdot\bfx_J} 
	d\bfx_J\, ,
	\label{eq3.13}
\end{equation}

\textit{(ii)} 
For a subset $I\subseteq [d],\, I\ne\emptyset,$ and an integrable 
function $f(t)$ for $t\in [0,1]$, we have  
\textit{Liouville's formula}
\begin{equation}
	\int\nolimits_{\varDelta_I}\,f(\bfe_1\centerdot\bfx_I)\,d\bfx_I\,=
	\int\nolimits_0^1\,f(t)\, d v_I(t)\, =\,
	\frac{1}{(|I|-1)!}\int\nolimits_0^1\,f(t)\, t^{|I|-1}\, d t\, ,
	\label{eq3.14}
\end{equation}
where $v_I(t)=(|I|!)^{-1}\, t^{|I|}$ is the volume of the simplex 
$\varDelta_I(t)=\{\bfx\in\Rr^I_{>0} :\,\sum\nolimits_{j\in I}x_j 
<t\}$.
\\

\textit{(iii)} 
The following useful formula for the integral \eqref{eq3.13} 
for special  $\bfy$ was given by Borda \cite[Equality (18)]{5}. Here the formula 
is written in our notation, and for the convenience of the reader, its proof is 
also provided.
\begin{lemma}
	Let $\bfy_J\in\Rr^J_{\ne0}$ be a nondiagonal point. Then
	\begin{equation}
		\XXX_J(\bfy_J)=
	\left(\frac{1}{2i\pi}\right)^{|J|}\,	
	\sum_{j\in J}\,\frac{e^{\,2i\pi y_j} - 
	1}{y_j\,\,\prod\nolimits_{l\in J, l\ne j}(y_j - y_l)} \, ,
		\label{eq3.15}
	\end{equation}
	\end{lemma}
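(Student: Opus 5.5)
We argue by induction on $k=|J|$, using the recursive structure of the simplex $\varDelta_J$ together with Liouville's formula \eqref{eq3.14}. It is convenient to prove the slightly more flexible identity
\begin{equation*}
	\int\nolimits_{\varDelta_J(s)}e^{\,2i\pi\bfy_J\centerdot\bfx_J}\,d\bfx_J
	=\left(\frac{1}{2i\pi}\right)^{|J|}
	\sum_{j\in J}\frac{e^{\,2i\pi s y_j}-1}{y_j\prod\nolimits_{l\in J,l\ne j}(y_j-y_l)}
\end{equation*}
for the dilated simplex $\varDelta_J(s)$, $s>0$. The case $s=1$ is \eqref{eq3.15}.

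For $|J|=1$, $J=\{j\}$, the integral equals $\int_0^s e^{2i\pi y_j x}\,dx=(e^{2i\pi s y_j}-1)/(2i\pi y_j)$. This agrees with the formula, since the product over $l\ne j$ is empty.

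For the inductive step, fix $j_0\in J$ and set $I=J\setminus\{j_0\}$. Write $\bfy_J\centerdot\bfx_J=y_{j_0}(x_{j_0}+\bfe_1\centerdot\bfx_I)+(\bfy_I-y_{j_0}\bfe_1)\centerdot\bfx_I$. Integrating first over $x_{j_0}\in(0,s-\bfe_1\centerdot\bfx_I)$ gives $\bigl(e^{2i\pi s y_{j_0}}-e^{2i\pi y_{j_0}\bfe_1\centerdot\bfx_I}\bigr)/(2i\pi y_{j_0})$. The remaining integral over $\bfx_I\in\varDelta_I(s)$ then splits into two terms:
\begin{itemize}
	\item[(a)] $e^{2i\pi s y_{j_0}}$ times $\int_{\varDelta_I(s)}e^{2i\pi(\bfy_I-y_{j_0}\bfe_1)\centerdot\bfx_I}$;
	\item[(b)] $\int_{\varDelta_I(s)}e^{2i\pi\bfy_I\centerdot\bfx_I}$.
\end{itemize}
Both have the same shape in dimension $|I|$, with the points $\bfy_I-y_{j_0}\bfe_1$ and $\bfy_I$ respectively.

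These points are admissible for the induction hypothesis. The point $\bfy_I$ is nonzero in each coordinate and nondiagonal. The point $\bfy_I-y_{j_0}\bfe_1$ has coordinates $y_l-y_{j_0}\ne0$ by nondiagonality of $\bfy_J$, and differences $y_l-y_{l'}\ne0$, so it is again nondiagonal with nonzero entries. Applying the induction hypothesis to (a) and (b) yields explicit sums of exponentials $e^{2i\pi s y_l}$ (after multiplying by $e^{2i\pi s y_{j_0}}$ in (a)) and constants.

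It then remains to verify that the resulting rational-exponential expression equals the claimed one. Both sides are linear combinations of $1$ and of $e^{2i\pi s y_l}$, $l\in J$. Since these functions of $s$ are linearly independent (the $y_l$ are distinct and nonzero), it suffices to match coefficients.

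\begin{itemize}
	\item For $l\ne j_0$, the coefficient of $e^{2i\pi s y_l}$ is
	\[
	\frac{1}{y_{j_0}}\Bigl[\frac{1}{(y_l-y_{j_0})\prod_{l'\in I,l'\ne l}(y_l-y_{l'})}-\frac{1}{y_l\prod_{l'\in I,l'\ne l}(y_l-y_{l'})}\Bigr],
	\]
	which simplifies to $\frac{1}{y_l\prod_{l'\in J,l'\ne l}(y_l-y_{l'})}$.
	\item For $e^{2i\pi s y_{j_0}}$, the coefficient collects a term $1/y_{j_0}$ from the integration and $-\frac{1}{y_{j_0}}\sum_{l\in I}\frac{1}{(y_l-y_{j_0})\prod(\cdot)}$ from the constant parts of (a). One identifies this via the partial-fraction identity $\sum_{l\in J}\frac{1}{y_l\prod_{l'\ne l}(y_l-y_{l'})}=\frac{(-1)^{|J|-1}}{\prod_{l\in J}y_l}$, i.e.\ the residue sum of $1/(z\prod_{l}(z-y_l))$.
	\item The constant term is handled the same way.
\end{itemize}

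The main obstacle is this bookkeeping of constants. An alternative that avoids it is to note that $\XXX_J$ is entire in $\bfy_J$, so the constant terms are forced: the right side must have no singularity at $y_j=0$. The cleaner route I would actually try first is a contour-integral representation of the whole integral, $\int_{\varDelta_J(s)}e^{2i\pi\bfy\centerdot\bfx}=\frac{1}{(2i\pi)^{|J|}}\cdot\frac{1}{2\pi i}\oint\frac{e^{2i\pi s z}-1}{z\prod_{l\in J}(z-y_l)}\,dz$ over a contour enclosing all $y_l$. By the residue theorem this gives \eqref{eq3.15} at once: the pole at $z=0$ is removable because of the factor $e^{2i\pi sz}-1$. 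The representation itself would be proved by the same induction via the convolution structure (Laplace transform in $s$, where the simplex integral is the $|J|$-fold convolution of the exponentials $e^{2i\pi y_l x}$, integrated once more).
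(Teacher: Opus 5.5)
Your proof is correct, but it runs the induction in the opposite order from the paper's. The paper uses the slicing \eqref{eq3.12} with $J'=\{n\}$: it integrates first over the $(n-1)$-dimensional simplex of size $1-x_n$. The inductive hypothesis is then invoked only once, at the dilated point $(1-x_n)\bfy_{[n-1]}$, which is trivially admissible, and homogeneity absorbs the factor $(1-x_n)^{n-1}$. The remaining $x_n$-integral is elementary and gives \eqref{eq3.17}. There the terms with $e^{2i\pi y_j}-1$, $j<n$, already have their final form, and the single identity \eqref{eq3.19}, i.e.\ $\sum_{j\in J}1/\prod_{k\in J,\,k\ne j}(y_j-y_k)=0$ for $|J|\ge 2$, turns the leftover multiples of $e^{2i\pi y_n}-1$ into the $j=n$ summand. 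Constants never need to be matched separately, because they stay bundled in the differences $e^{2i\pi y}-1$.

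You instead integrate out the single variable $x_{j_0}$ first. The price is that you need the hypothesis at two points, one of them the translate $\bfy_I-y_{j_0}\bfe_1$; this is exactly where nondiagonality enters, to keep its entries nonzero. You then match coefficients, using the residue identity for $1/(z\prod_l(z-y_l))$ more than once. The coefficient matching does work.

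\textbf{The $e^{2i\pi s y_{j_0}}$ coefficient.} Your description of it is slightly off. There is no separate term $1/y_{j_0}$ ``from the integration''. The whole coefficient comes from the $-1$ parts of (a). Your identity, applied at the shifted point in dimension $|I|$, gives
\[
y_{j_0}^{-1}(-1)^{|I|}\prod_{l\in I}(y_l-y_{j_0})^{-1}=y_{j_0}^{-1}\prod_{l\in I}(y_{j_0}-y_l)^{-1},
\]
as required.

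\textbf{The constant term.} This is easiest with your parameter $s$. Both sides tend to $0$ as $s\to0^+$, so once the exponential coefficients agree, the constants agree automatically. Also, matching coefficients is sufficient by itself; no appeal to linear independence is needed.

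\textbf{The contour/Laplace alternative.} This is a genuinely non-inductive proof, not ``the same induction''. By the convolution structure, the Laplace transform of $s\mapsto\int_{\varDelta_J(s)}e^{2i\pi\bfy_J\centerdot\bfx_J}\,d\bfx_J$ is $\lambda^{-1}\prod_{l\in J}(\lambda-2i\pi y_l)^{-1}$. Residue inversion then yields \eqref{eq3.15} directly; this is the Hermite--Genocchi formula for the divided difference of $e^{2i\pi z}$ at the nodes $0$ and $y_l$, $l\in J$. As written, though, it is only a sketch.
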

	\begin{proof} Without loss of generality, we assume that $J=[n]$ and 
	prove \eqref{eq3.15} by induction on $n$. For $n=1$, the formula is 
	true:
	\begin{equation*}
		\XXX_{[1]}(\bfy_1)=\int\nolimits_0^1 e^{2i\pi y_1 x_1}\, d x_1 =
		\,\frac{e^{\,2i\pi y_j} - 1}{2i\pi\, y_1} \, .
	\end{equation*} 
Suppose that \eqref{eq3.15}	holds for the subset $[n-1]\subset [n]$. Then 
from \eqref{eq3.12},  after some calculation, we obtain
\begin{align}
&\XXX_{[n]}(\bfy_{[n]})=
\left(\frac{1}{2i\pi}\right)^{n-1}\int\nolimits_0^1\, dx_n\, e^{2i\pi y_n x_n}	
\sum_{j=1}^{n-1}\,\frac{e^{\,2i\pi y_j (1-x_n)} - 1}
{y_j\,\prod\nolimits_{k\in [n-1],k\ne j}(y_j - y_k)}\notag
	\\
&=\left(\frac{1}{2i\pi}\right)^n \left(\sum_{j=1}^{n-1}\,\frac{e^{\,2i\pi y_j } 
- 1}{y_j\,\prod\nolimits_{k\ne j}(y_j - y_k)} -
	\sum_{j=1}^{n-1}\,
	\frac{e^{\,2i\pi y_n}-1}{y_n\prod\nolimits_{k\ne j}(y_j - y_k)} 
	\right)	 .
	\label{eq3.17}
\end{align}

Consider the partial fraction decomposition 
\begin{equation}
	\frac{1}{\prod\nolimits_{k=1}^{n-1}(z - y_k)}=\sum_{j=1}^{n-1}\,
	\frac{1}{z-y_j}\,\frac{1}{\prod\nolimits_{k\ne j,n} (y_j-y_k)}, 
	\quad z\in \Cc .
	\label{eq3.18}
\end{equation}
Putting here $z=y_n$, we obtain
\begin{align}
	\frac{1}{\prod\nolimits_{k=1}^{n-1}(y_n - y_k)}=&-\sum_{j=1}^{n-1}\,
	\frac{1}{(y_j-y_n)\prod\nolimits_{k\ne j,n} (y_j-y_k)}\notag
	\\ 
	=&-\sum_{j=1}^{n-1}\,\frac{1}{\prod\nolimits_{k\ne j} (y_j-y_k)}\, .
	\label{eq3.19}
\end{align}
Substituting \eqref{eq3.19} into the second sum in \eqref{eq3.17} completes 
the proof. 
\end{proof}

\textit{(iv)} 
The following simple observation plays a key role in the subsequent 
calculations.
\begin{lemma}[\bf{The Symmetrization Lemma}]
	Let $J\ne\emptyset$ and let $\bfy\in\Rr^J_{\ne0}$ be a nondiagonal point. 
	Then for all integers $n\ge |J|$, we have 
	\begin{equation}	
		\sum_{j\in J}\,\frac{1}{y_j^{n-|J|+1}\,\prod\nolimits_{k\in J,k\ne 
		j}(y_j - 
		y_k)} =(-1)^{|J|-1}\sum_{\bfn\in\Zz_{>0}^J: |\bfn|_1=n}
		\prod\nolimits_{j\in J}\frac{1}{y_j^{n_j}}\, ,
		\label{eq3.20}
	\end{equation}
	here $|\bfn|_1=\sum\nolimits_{j\in J}\,n_j$.
	\end{lemma}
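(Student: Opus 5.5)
The plan is to prove \eqref{eq3.20} by a contour-integral (residue) argument in one complex variable. The partial fraction identity \eqref{eq3.18} already used in the proof of Lemma~5.1 suggests this. Put $k=|J|$ and $p=n-k+1\ge 1$, and consider the rational function
\begin{equation*}
	F(z)=\frac{1}{z^{p}\,\prod\nolimits_{l\in J}(z-y_l)}\,,\quad z\in\Cc\,.
\end{equation*}
Since $\bfy$ is nondiagonal and $y_j\ne0$ for $j\in J$, the poles $y_j$, $j\in J$, are simple and distinct from each other and from $0$. The residue of $F$ at $z=y_j$ is
\begin{equation*}
\frac{1}{y_j^{p}\prod_{l\ne j}(y_j-y_l)}\,,
\end{equation*}
which is exactly the $j$-th term on the left-hand side of \eqref{eq3.20}.

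Next I would use that the residues of $F$ sum to zero, because $F(z)=O(|z|^{-p-k})$ with $p+k\ge 2$, so the residue at infinity vanishes. Hence the left-hand side of \eqref{eq3.20} equals $-\mathrm{Res}_{z=0}F$, the negative of the coefficient of $z^{p-1}$ in the Taylor expansion of $\prod_{l\in J}(z-y_l)^{-1}$ at $z=0$. To compute that coefficient, write
\begin{equation*}
\frac{1}{z-y_l}=-\frac{1}{y_l}\sum_{r\ge0}\frac{z^r}{y_l^{r}}\,.
\end{equation*}
Multiplying over $l\in J$ gives the coefficient of $z^{p-1}$ as
\begin{equation*}
(-1)^k\sum_{\sum r_l=p-1}\prod_{l\in J}y_l^{-r_l-1}\,.
\end{equation*}
Setting $n_l=r_l+1\ge1$, the constraint becomes $|\bfn|_1=p-1+k=n$. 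So $\mathrm{Res}_{z=0}F=(-1)^k\sum_{\bfn\in\Zz^J_{>0},\,|\bfn|_1=n}\prod_{j\in J} y_j^{-n_j}$, and therefore the left-hand side equals $(-1)^{k-1}$ times that sum, which is \eqref{eq3.20}.

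Everything here is routine. The only points needing care are the sign bookkeeping and the check that the residue at infinity vanishes, which needs $p+k\ge2$ and holds since $p\ge1$ and $k\ge1$. As a consistency check (or as an alternative proof avoiding complex analysis), I would verify the case $k=1$, where both sides equal $y_j^{-n}$. One could also argue by induction on $k$ using \eqref{eq3.19}, the identity $\frac{1}{y_j^{a}(y_j-y_m)}$ telescoping into complete homogeneous sums, but the residue argument is shorter and is the one I would write.
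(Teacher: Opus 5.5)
Your proof is correct, and it is essentially the paper's argument in residue language. The paper expands the partial fraction decomposition of $\prod_{j\in J}(z-y_j)^{-1}$ in geometric series at $z=0$ and compares coefficients of $z^{n-|J|}$, and that coefficient identity is exactly your relation $\mathrm{Res}_{z=0}F+\sum_{j\in J}\mathrm{Res}_{z=y_j}F=0$; your sign bookkeeping and the vanishing of the residue at infinity (since $p+|J|\ge 2$) are both handled correctly.
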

\begin{proof} Similarly to \eqref{eq3.18}, we have the partial fraction 
decomposition
\begin{equation*}
	\frac{1}{\prod\nolimits_{j\in J}(z - y_j)}=\sum\nolimits_{j\in J}\,
	\frac{1}{z-y_j}\,\frac{1}{\prod\nolimits_{k\in J,\,k\ne j} (y_j-y_k)}, 
	\quad z\in \Cc .
\end{equation*}	
	Rewrite this identity in the form
\begin{equation*}
	\prod_{j\in J}y_j^{-1}(1 - \frac{z}{y_j})^{-1}=(-1)^{|J|-1}\sum_{j\in J}\,
	(1 - \frac{z}{y_j})^{-1}\,\,\frac{1}{y_j\prod\nolimits_{k\in J,k\ne 
	j}(y_j-y_k)} . 
\end{equation*}

Substituting the expansion 
$(1 - \frac{z}{y_j})^{-1} =\sum\nolimits_{l_j\ge0}\,\,(\frac{z}{y_j})^{l_j}$
into this formula and equating terms with the same power of $z$, we obtain 
\begin{equation}	
	\sum_{j\in J}\,\frac{1}{y_j^{l+1}\,\prod\nolimits_{k\in J,k\ne 
			j}(y_j - 
		y_k)} =(-1)^{|J|-1}\sum_{|\bfl|_1=l}
	\prod\nolimits_{j\in J}\frac{1}{y_j^{l_j+1}}\, ,
	\label{eq3.20*} 
\end{equation}
where $|\bfl|_1=\sum_{j\in J}l_j$. Let $\bfn=(n_1,\dots ,n_d)\in\Zz_{>0}^d$ be 
a point with $n_j=l_j +1$ for $j\in J$ and $n_j=0$ for $j\in J'$. Then 
$|\bfn|_1=|\bfl|_1+|J|=l+|J|$. Replacing $\bfl$ by $\bfn$ in \eqref{eq3.20*}, 
we obtain \eqref{eq3.20}.
\end{proof}

\begin{proof}[\bf{Proof of Proposition~5.1.}] 
	For $J=[d]$, Lemmas~5.1 and 5.2 (with $n=d$) imply
	\begin{equation}
		\XXX(\bfy)=\left(\frac{-1}{2i\pi}\right)^d
		\frac{1}{\prod\nolimits_{j\in [d]} y_j}	 +
		\left(\frac{1}{2i\pi}\right)^d	
		\sum\nolimits_{j=1}^d\,\frac{e^{\,2i\pi y_j}}{y_j\,
			\prod\nolimits_{k\ne j}(y_j - y_k)} \, .
		\label{eq3.23}
		\end{equation}
		This proves \eqref{eq3.15} for $J=[d]$.

		Here it is worth noting the following.
		Let $J\subset [d], \,J\ne\emptyset$. 
		Since $\XXX(\bfy)$ is a smooth function, one could 
		consider the limit of $\XXX(\bfy)$ as $y_j\to 0$ for $j\in J'$  
		to compute $\XXX_J(\bfy_J)$. This approach is possible, but it leads 
		to rather lengthy calculations to resolve the singularities in 
		\eqref{eq3.23}. We proceed differently.
		
		Substituting \eqref{eq3.15} into \eqref{eq3.12}, we find
		\begin{equation}
			\XXX(\bfy_J)=
			\left(\frac{1}{2i\pi}\right)^{|J|-1}\,	
			\sum_{j\in J}\,\frac{1}
				{\prod\nolimits_{l\in J, l\ne j}(y_j - y_l)} \,\, \III(y_j)\, ,
			\label{eq3.24}
		\end{equation}
		where
		\begin{equation*}
			\III (y)=
			\int\nolimits_{\varDelta_{J'}}\,d\bfx_{J'}\,\,
			\frac{e^{2i\pi\, \beta(\bfx_{J'})\,y}-1}{2i\pi y}\, ,\quad y\ne0\, ,
		\end{equation*}
	Using Liouville's formula \eqref{eq3.14} with $I=J'$, we find
	\begin{equation*}
		\III (y)=
		\frac{1}{(|J'|-1)!}\int\nolimits_0^1\,dt\,\,t^{|J'|-1}\,\,\,
		\frac{e^{2i\pi (1-t)y}-1}{2i\pi y}\, ,
	\end{equation*}
	This integral  can be computed as follows:
	\begin{align*}
		\III (y)=&
		\frac{1}{(|J'|-1)!}\int\nolimits_0^1\,dt\,\,t^{|J'|-1}\,\,\,
	\int\nolimits_0^{1-t} e^{2i\pi yu} du \notag
	\\
	=& -\frac{1}{|J'|!}\int\nolimits_0^1\,dt\,\,t^{|J'|}\,e^{2i\pi y(1-t)}\notag
	\\
	=&-\sum\nolimits_{l=0}^{|J'|}\frac{1}{(|J'|-l)!\,(2i\pi y)^{l+1}} 
	+\frac{e^{2i\pi 
			y}}{(2i\pi y)^{|J'|+1}}\, ,
	\end{align*}	
where the last integral can be computed by 
integration by parts or found in tables.		

Setting $|J'|=d-|J|$ and $n=|J|+l$,  we write
	\begin{align*}
		\III (y)=-\sum\nolimits_{n=|J|}^d\frac{1}{(d-n)!\,(2i\pi 
		y)^{n-|J|+1}} 
		+\frac{e^{2i\pi y}}{(2i\pi y)^{d-|J|+1}}\, .
	\end{align*}
	Substituting this formula  into \eqref{eq3.24}, we obtain
	\begin{equation*}
		\XXX (\bfy)=X_1(\bfy)+X_2(\bfy)\, ,
	\end{equation*}
	where	
	\begin{align}
		X_1(\bfy)=
		-\sum_{n=|J|}^d\left(\frac{1}{2i\pi}\right)^n 
		\sum_{j\in J}
		\frac{1}{(d-n)!\, 
			y_j^{n-|J|+1}\prod\nolimits_{l\in J, l\ne j}(y_j - y_l)} 
		\label{eq3.30}
	\end{align}
	and
	\begin{align}
		X_2(\bfy)=\left(\frac{1}{2i\pi}\right)^d\,
		\sum_{j\in J}\,\,
		\frac{e^{2i\pi y_j}}{ y_j^{d-|J|+1}\prod\nolimits_{l\in J, l\ne 
		j}(y_j - y_l)}\, .
		\label{eq3.31}
	\end{align}
	
	Now we apply Lemma~5.2  to  symmetrize the inner sum in \eqref{eq3.30}. 
This gives
	\begin{align*}
		X_1(\bfy)=(-1)^{|J|}
		\sum_{n=|J|}^d\left(\frac{1}{2i\pi}\right)^n
		\frac{1}{(d-n)!} 
		\sum_{\bfn\in\Zz_{>0}^J: |\bfn|_1=n}
		\prod\nolimits_{j\in J}\frac{1}{y_j^{n_j}}\, .
	\end{align*}
	The sum \eqref{eq3.31} can be written as
	\begin{align*}
		X_2(\bfy)=\left(\frac{1}{2i\pi}\right)^d\,
		\sum_{j\in J}\,\,
		\frac{e^{2i\pi y_j}}{ y_j\prod\nolimits_{l\in [d], l\ne j}(y_j -y_l)}\,,
	\end{align*}
because $\bfy\in\Rr^J_{\ne 0}$ and  $y_l=0$ for $l\in J'$.
	\end{proof}

We are now in a position to state the main result of this section.
\begin{proposition} 
	Let the inclines $\theta_{j,l}=\frac{w_l}{w_j}\in\Theta_{\widehat\bfw}$ for 
	$l\ne j$  be irrational. Then
	$N(t;\bfw,\bfu)$  has the Fourier expansion
	\begin{equation*}
		N(t;\bfw,\bfu)\thicksim\sum\nolimits_{\bfm\in\Zz^d}\,
		N_{\bfm}(t;\bfw)\,e^{\,2i\pi\bfm\centerdot\bfu}, 
	\end{equation*}
	with Fourier coefficients
	\begin{equation*}
		N_{\bfm}(t;\bfw)=\frac{1}{d!\,w_1\dots w_d} \,\,
		Q_{\bfm}(t;\bfw)+R_{\bfm}(t;\bfw)\,, 
	\end{equation*}
	
	The coefficients $Q_{\bfm}(t;\bfw)$ are given by
	\begin{equation}
		Q_{\bfm}(t;\bfw)=\,t^d +\,
		\sum\nolimits_{J\subseteq 
		[d],\,J\ne\emptyset}\,\,\chi(\Zz_{\ne0}^J,\bfm)\,\, 
		b_{\bfm}(t;\bfw, J)\, ,
		\label{eq2.09}
	\end{equation}
	where 
	\begin{equation}
		b_{\bfm}(t;\bfw, J)
		=\sum\nolimits_{n=|J|}^d \, 
		b_{n, \bfm}(\bfw,J)\,\,\frac{d!}{n!(d-n)!}\,\,t^{d-n}\, 
		\label{eq2.010}
	\end{equation}
	with
	\begin{equation}
		b_{n, \bfm}(\bfw,J)=(-1)^{|J|}\,n!\,\left(\frac{1}{2i\pi}\right)^n\,
		\sum\nolimits_{\bfn\in\Zz^J_{>0}:|\bfn|_1=n}\, 
		\prod\nolimits_{j\in 
			J}\frac{w_j^{n_j}}{m_j^{n_j}}\, .
		\label{eq3.37}
	\end{equation}
	
	The coefficients $R_{\bfm}(t;\bfw)$ are given by
	\begin{equation}
		R_{\bfm}(t;\bfw)=\left(\frac{1}{2i\pi}\right)^d\,
		\sum_{j=1}^d \,\, \chi(Z_j,\bfm)\,\,
		\,\frac{e^{2i\pi\, \frac{m_j}{w_j}\, t}}
		{m_j\prod\nolimits_{l\in [d], l\ne j}(\theta_{j,l}\,m_j -m_l)}\,.
		\label{eq3.38}
	\end{equation}
\end{proposition}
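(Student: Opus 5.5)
The proof is a direct substitution of Proposition~5.1 into the formula \eqref{eq3.6} for the Fourier coefficients $N_{\bfm}(t;\bfw)$, with $\bfy=\bfy(\bfm)=\bigl(\frac{m_1}{w_1}t,\dots,\frac{m_d}{w_d}t\bigr)$.

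We split $\Zz^d$ according to the partition \eqref{eq3.10*} into the pieces $\Zz^J_{\ne0}$, $J\subseteq[d]$.

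\emph{The case $J=\emptyset$.} Here $\bfm=\mathbf{0}$, and \eqref{eq3.6} together with $\XXX(0)=(d!)^{-1}$ gives $N_{\mathbf 0}=t^d/(d!\,w_1\cdots w_d)$. This is the term $t^d$ in \eqref{eq2.09}, and $R_{\mathbf 0}=0$ because $\mathbf 0\notin Z_j$ for every $j$.

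\emph{The case $J\ne\emptyset$, $\bfm\in\Zz^J_{\ne0}$.} Then $\bfy(\bfm)\in\Rr^J_{\ne0}$. By the Remark after Definition~5.1, irrationality of the inclines makes $\bfy(\bfm)$ nondiagonal, so Proposition~5.1 applies and $\XXX=X_1+X_2$. Multiply $X_1$ by $t^d/(w_1\cdots w_d)$. For each $n$ and each $\bfn\in\Zz^J_{>0}$ with $|\bfn|_1=n$,
\[
\prod_{j\in J} y_j^{-n_j}=t^{-n}\prod_{j\in J} \frac{w_j^{n_j}}{m_j^{n_j}},
\]
so the $n$-th term contributes
\[
\frac{t^{d-n}}{w_1\cdots w_d}\,\frac{(-1)^{|J|}}{(d-n)!}\Bigl(\frac{1}{2i\pi}\Bigr)^n\sum_{\bfn}\prod_{j\in J} \frac{w_j^{n_j}}{m_j^{n_j}} .
\]
Factoring out $\frac{1}{d!\,w_1\cdots w_d}$ and writing $\frac{1}{(d-n)!}=\frac{d!}{n!(d-n)!}\cdot\frac{n!}{d!}$, this is exactly $\frac{1}{d!\,w_1\cdots w_d}\,b_{n,\bfm}(\bfw,J)\binom{d}{n}t^{d-n}$, which gives \eqref{eq2.010}–\eqref{eq3.37}.

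\emph{The term $X_2$.} In $X_2$ the sum runs over $j\in J$, that is, over the $j$ with $m_j\ne0$. Summed over all $J$, the indicator bookkeeping $\chi(\Zz^J_{\ne0},\bfm)$ restricted to $j\in J$ becomes $\sum_j\chi(Z_j,\bfm)$ by \eqref{eq2.003}. For such $j$,
\[
y_j\prod_{l\ne j}(y_j-y_l)=t^d\,\frac{m_j}{w_j}\prod_{l\ne j}\Bigl(\frac{m_j}{w_j}-\frac{m_l}{w_l}\Bigr).
\]
Multiplying by $t^d/(w_1\cdots w_d)$ cancels $t^d$, and using $\frac{m_j}{w_j}-\frac{m_l}{w_l}=\frac{1}{w_l}\bigl(\theta_{j,l}m_j-m_l\bigr)$, the factor $\frac{1}{w_j}\prod_{l\ne j}\frac{1}{w_l}$ cancels $\frac{1}{w_1\cdots w_d}$. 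What remains is the summand of \eqref{eq3.38}, with exponent $2i\pi\frac{m_j}{w_j}t$. Here the factors with $l\in J'$ (where $m_l=0$) are included automatically, as in the last step of the proof of Proposition~5.1.

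There is no real obstacle. The only points needing care are the bookkeeping of the $w$-powers and the sign conventions: \eqref{eq3.6} uses $\widetilde\chi(\Delta^{-},-\bfm)$, so one must check that it is $+\bfm$ that enters $\XXX$, consistent with the definition \eqref{eq3.7}. The regrouping over $J$ via \eqref{eq3.10*} and \eqref{eq2.003} also has to be done correctly.
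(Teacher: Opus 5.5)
Your proposal is correct and is the paper's proof: partition by \eqref{eq3.10*}, substitute $\bfy(\bfm)$ from \eqref{eq3.6} into Proposition~5.1, read off $Q_{\bfm}$ from $X_1$ and $R_{\bfm}$ from $X_2$ (regrouping with \eqref{eq2.003}), and your $w$- and $t$-bookkeeping checks out. Two details you handle implicitly are worth stating. First, your computation puts the $t^d$ term only at $\bfm=\mathbf 0$, which is right because for $\bfm\ne\mathbf 0$ only powers $t^{d-n}$ with $n\ge|J|\ge 1$ occur; so \eqref{eq2.09} must be read as $\chi(\Zz^{\emptyset}_{\ne0},\bfm)\,t^d+\cdots$, since as printed it is false for $\bfm\ne\mathbf 0$ (Proposition~6.1 has the same slip, so it cancels in Proposition~6.2). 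Second, the nondiagonality you cite from the Remark holds literally only for pairs containing an index of $J$, because points with $|J'|\ge 2$ have repeated zero coordinates; that restricted form is all the proof of Proposition~5.1 actually uses.
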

\begin{proof}
	Using the partition \eqref{eq3.10*}, we rewrite \eqref{eq3.0} as follows
	\begin{equation}
N(t;\bfw, \bfu)\thicksim\sum\nolimits_{J\subseteq [d]}
		\sum\nolimits_{\bfm\in\Zz^d}\,
		\chi (\,\Zz_{\ne0}^J,\bfm\,)\,
		N_{\bfm}(t;\bfw)\,e^{\,2i\pi\bfm\centerdot\bfu}, 
		\label{eq2.006}
	\end{equation}
Taking \eqref{eq3.6} and \eqref{eq2.003} into account, we apply Proposition~5.1   
to  \eqref{eq2.006}.
The first term  $X_1$ in \eqref{eq3.29}  immediately leads to  
\eqref{eq2.09}-\eqref{eq3.37}, and the second term $X_2$ leads to \eqref{eq3.38}.
	\end{proof}


\section{Fourier expansions for the error terms}
\label{sec6}

We first derive the Fourier expansion for the periodization of the multiple 
Bernoulli polynomials defined in \eqref{eq2.22a}.
We consider
\begin{align*}
	\BBB_k(t;\bfw, \{\bfu\})
	=\sum\nolimits_{n=0}^k\,\frac{k!}{n!(k-n)!}\, 
	\BBB_n(\bfw,\{\bfu\})\, t^{k-n}
\end{align*}
with the periodic coefficients 
\begin{align}
	&\BBB_n(\bfw,\{\bfu\})\notag
	\\
	&=\sum_{n_1+\dots +n_d=n}\,\frac{n!}{n_1!\dots n_d!}\, 
	B_{n_1}(\{u_1\})\dots B_{n_d}(\{u_d\})\, w_1^{n_1}\dots w_d^{n_d}.
	\label{eq2.24}
\end{align}

It is useful to keep in mind that $B_0(\{u\})=1,\,B_1(\{u\})=\{u\}-1/2=((u))$
(the sawtooth function), 
$B_n(\{u\})$ for $n\ge 2$ 
are continuous for $u\in\Rr$ and all
$B_n(\{u\})$ for $n\ge 1$ are $\CCC^{\infty}$ functions
outside the neighborhoods of integer points. Hence, $\BBB_k(t;\bfw, \{\bfu\})$ are
$\CCC^{\infty}$ functions of $t\in\Rr_{>0}$  and $\bfu\in\UUU(\delta)$
with jumps on the hyperplanes 
$\{\bfu: u_j=l\}$ for $l\in\Zz$ and $j\in[d]$.

The following Fourier expansions  
\begin{equation}
	B_n(\{u\})\,\sim\,- \frac{n!}{(2i\pi)^n}
	\sum\nolimits_{m\in\Zz_{\ne 0}}\,\frac{e^{2i\pi mu}}{m^n}\,,\quad 
	n\in\Zz_{>0},
	\label{eq2.25}
\end{equation}
are well known (see \cite[Sec.9.1.4]{9}). 
To substitute \eqref{eq2.25} into \eqref{eq2.24}, 
 the formula \eqref{eq2.24} should be written slightly differently. 
With the notation \eqref{eq2.02}, we rewrite \eqref{eq2.24} as follows: 
\begin{align}
	\BBB_n(\bfw,\{\bfu\})
	=n!\,\sum\nolimits_{J\subseteq 
		[d]:|J|\le n}
	\sum\nolimits_{\bfn\in\Zz^J_{>0}:|\bfn|_1=n}\,\prod\nolimits_{j\in 
		J}\frac{B_{n_j}(\{u_j\})\,w_j^{n_j}}{n_j!}\, .
	\label{eq2.30}
\end{align}
In this formula, the factors $B_{n}(\{u\})$ with $n=0$ and $n\ge1$ are separated.
Note that $|\bfn|_1\ge |J|$ for all 
$\bfn\in\Zz^J_{>0}$.
Substituting \eqref{eq2.25} into \eqref{eq2.30},  
after some calculations, we arrive at the following.
\begin{proposition}
	For $k\ge1$, we have the Fourier expansions
	\begin{align*}
		\BBB_k(t;\bfw,\{\bfu\})
		\sim\,
		\sum\nolimits_{\bfm\in\Zz^d} 
		b_{k, \bfm}(t;\bfw)\,e^{2i\pi \bfm\centerdot\bfu}\, 
	\end{align*}
	with Fourier coefficients 
	\begin{align}
		b_{k, \bfm}(t;\bfw)= t^k +
		\sum\nolimits_{J\subseteq [d],0<|J|\le 
			k}\chi (\Zz_{\ne0}^J,\bfm) \,\,
		b_{k, \bfm}(t;\bfw, J)\, ,
		\label{eq2.33*}
	\end{align}
	where
	\begin{equation*}
		b_{k, \bfm}(t;\bfw, J)
		=\sum\nolimits_{n=|J|}^k \, 
		b_{n, \bfm}(\bfw,J)\,\,\frac{k!}{n!(k-n)!}\,\,t^{k-n}\, ,
	\end{equation*}
	and
	\begin{equation*}
		b_{n, \bfm}(\bfw,J)=(-1)^{|J|}\,n!\,\left(\frac{1}{2i\pi}\right)^n\,
		\sum\nolimits_{\bfn\in\Zz^J_{>0}:|\bfn|_1=n}\, 
		\prod\nolimits_{j\in 
			J}\frac{w_j^{n_j}}{m_j^{n_j}}\, ,
	\end{equation*}
	
	For $k\ge d$, the restriction $|J|\le k$ in 
	\eqref{eq2.33*} is unnecessary, and \eqref{eq2.33*} takes the form
	\begin{align*}
		b_{k, \bfm}(t;\bfw)= t^k +
		\sum\nolimits_{J\subseteq [d],J\ne\emptyset}\chi (\Zz_{\ne0}^J,\bfm) \,\,
		b_{k, \bfm}(t;\bfw, J)\, .
	\end{align*}
\end{proposition}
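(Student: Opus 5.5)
The proof is a direct formal computation: substitute the one-dimensional expansions \eqref{eq2.25} into \eqref{eq2.30}, expand the product over $j\in J$, and collect the harmonics. Since we work with $\sim$ (lists of Fourier harmonics), we may multiply the formal series and pass the finite sums over $J$ and $\bfn$ through them. Products of expansions in distinct variables $u_j$ are unproblematic at the level of coefficients, because each harmonic $e^{2i\pi\bfm\centerdot\bfu}$ arises from exactly one choice of one-dimensional harmonics.

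\textbf{Step 1: the coefficients $\BBB_n(\bfw,\{\bfu\})$.} Fix $J\subseteq[d]$ with $0<|J|\le n$ and $\bfn\in\Zz^J_{>0}$ with $|\bfn|_1=n$. Every $n_j\ge1$, so \eqref{eq2.25} applies to each factor, and
\begin{equation*}
\prod\nolimits_{j\in J}\frac{B_{n_j}(\{u_j\})\,w_j^{n_j}}{n_j!}\sim (-1)^{|J|}\Big(\frac{1}{2i\pi}\Big)^{n}\sum\nolimits_{\bfm\in\Zz^J_{\ne0}}\Big(\prod\nolimits_{j\in J}\frac{w_j^{n_j}}{m_j^{n_j}}\Big)e^{2i\pi\bfm\centerdot\bfu}.
\end{equation*}
The factorials $n_j!$ cancel, and the exponents combine as $\prod_j(2i\pi)^{-n_j}=(2i\pi)^{-n}$. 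Multiplying by $n!$ and summing over $\bfn$ yields, for each harmonic $\bfm\in\Zz^J_{\ne0}$, exactly the quantity $b_{n,\bfm}(\bfw,J)$.

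The term $J=\emptyset$ occurs only for $n=0$ and equals $1$. The sets $\Zz^J_{\ne0}$ are pairwise disjoint, as in \eqref{eq3.10*}, so we obtain
\begin{equation*}
\BBB_n(\bfw,\{\bfu\})\sim\delta_{n,0}+\sum_{\bfm}\sum_{J:\,0<|J|\le n}\chi(\Zz^J_{\ne0},\bfm)\,b_{n,\bfm}(\bfw,J)\,e^{2i\pi\bfm\centerdot\bfu}.
\end{equation*}
Here $\delta_{n,0}$ is the constant harmonic coming from $n=0$.

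\textbf{Step 2: the polynomials.} Insert this into $\BBB_k(t;\bfw,\{\bfu\})=\sum_{n=0}^k\binom{k}{n}\BBB_n(\bfw,\{\bfu\})t^{k-n}$. The $n=0$ term contributes $t^k$, and only to the harmonic $\bfm=\mathbf 0$.

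For a fixed $\bfm$ and a fixed $J$ with $\bfm\in\Zz^J_{\ne0}$, interchanging the finite sums over $n$ and $J$ gives the sum over $n$ from $|J|$ to $k$ of $\binom{k}{n}b_{n,\bfm}(\bfw,J)t^{k-n}$, which is $b_{k,\bfm}(t;\bfw,J)$. The constraint $|J|\le n\le k$ becomes $0<|J|\le k$, so we obtain \eqref{eq2.33*}.

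Note that for $\bfm=\mathbf 0$ we have $\chi(\Zz^J_{\ne0},\mathbf 0)=0$ for every $J\ne\emptyset$. Hence the formula correctly gives $b_{k,\mathbf0}=t^k$, consistent with the mean value of $B_n(\{u\})$ being $0$ for $n\ge1$. For $k\ge d$, every $J\subseteq[d]$ satisfies $|J|\le d\le k$, so the restriction can be dropped.

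\textbf{Main obstacle.} There is no real analytic obstacle; the only care needed is bookkeeping. One must check that the one-dimensional factors with $n_j=0$ have been completely separated by \eqref{eq2.30}. Only then does each $J$ correspond exactly to the support set $\{j:m_j\ne0\}$ of the harmonic, which is what produces the indicator $\chi(\Zz^J_{\ne0},\bfm)$.

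One must also confirm that the formal product of the expansions \eqref{eq2.25}, which converge only conditionally for $n_j=1$, is legitimate as a statement about Fourier coefficients. This holds because each factor lies in $L^2$ of a different variable, and Fourier coefficients of a tensor product are products of the one-dimensional coefficients, as Fubini's theorem shows.
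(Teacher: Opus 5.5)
Your proposal is correct and follows the paper's route: the paper's proof consists exactly of substituting the one-dimensional expansions \eqref{eq2.25} into the separated form \eqref{eq2.30} and collecting harmonics, which you carry out in full, including the Fubini justification that Fourier coefficients of products in distinct variables multiply. Your observation that the $t^k$ term appears only at the harmonic $\bfm=\mathbf 0$ is the right reading of \eqref{eq2.33*}: the displayed $t^k$ there should carry the factor $\chi(\Zz^{\emptyset}_{\ne0},\bfm)$, i.e.\ it is the $J=\emptyset$ term of the sum.
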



Using Propositions 5.2 and 6.1, we immediately obtain the Fourier expansion of the 
error term
\begin{equation*}
	R(t;\bfw,\{\bfu\})=
	N(t;\bfw,\bfu)-
	\frac{1}{d!\,w_1\dots w_d} \,\,
	\BBB_d(t;\,\bfw, \{\bfu\})\,. 
\end{equation*}
\begin{proposition} 
	Let the inclines $\theta_{j,l}=\frac{\widehat w_l}{\widehat 
	w_j}\in\Theta_{\widehat\bfw}$ for $l\ne j$  
	be irrational. 
	Then
	$R(t;\bfw,\{\bfu\})$  has the Fourier expansion
\begin{equation*}
	R(t;\bfw,\{\bfu\})\sim\,\sum\nolimits_{\bfm\in\Zz^d}\, R_{\bfm}(t;\bfw)\,\, 
	e^{2i\pi \bfm\centerdot\bfu}\, 
	\label{eq4.2}
\end{equation*}
with Fourier coefficients 
\begin{equation*}
R_{\bfm}(t;\bfw)=\sum\nolimits_{j=1}^d \, R_{j,\bfm}(t;\bfw)\, ,
	\end{equation*}
	where
\begin{equation*}
	R_{j,\bfm}(t;\bfw)=\left(\frac{1}{2i\pi}\right)^d\,
	\, \chi(Z_j,\bfm)\,
	\frac{e^{2i\pi \frac{m_j}{w_j}t}}
	{m_j\prod\nolimits_{l\in [d], l\ne j}(\theta_{j,l}\,m_j -m_l)}\, ,
\end{equation*}
\end{proposition}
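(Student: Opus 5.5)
The plan is to subtract the two Fourier expansions coefficientwise, which is legitimate at the formal level since both are expansions (in the sense of $\sim$) of periodic functions of $\bfu$ in $L^{\infty}(\Rr^d/\Zz^d)$. Then the Fourier coefficients of $R(t;\bfw,\{\bfu\})$ equal $N_{\bfm}(t;\bfw)-\frac{1}{d!\,w_1\cdots w_d}\,b_{d,\bfm}(t;\bfw)$.

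First, I check that the leading term subtracted in the definition of $R(t;\bfw,\{\bfu\})$ really is $\frac{1}{d!\,w_1\cdots w_d}\BBB_d(t;\bfw,\{\bfu\})$. By \eqref{eq2.22a} and Lemma~2.1, $\BBB_d(t;\bfw,\{\bfu\})=\BBB_d^{\star}(t+\bfw\centerdot((\bfu));\bfw)$. This agrees with $\LLLL_d(t;\bfw,\{\bfu\})$ in Theorem~4.1, so the expansion concerns the same error term.

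Next, I apply Proposition~6.1 with $k=d$. Because $k\ge d$, the restriction $|J|\le k$ disappears, and
\[
b_{d,\bfm}(t;\bfw)=t^d+\sum\nolimits_{J\ne\emptyset}\chi(\Zz^J_{\ne0},\bfm)\,b_{d,\bfm}(t;\bfw,J).
\]
I compare this with \eqref{eq2.09}--\eqref{eq3.37} of Proposition~5.2. The polynomials $b_{d,\bfm}(t;\bfw,J)$ of Proposition~6.1 and $b_{\bfm}(t;\bfw,J)$ of \eqref{eq2.010} are defined by literally the same formulas: the same binomial weights $\frac{d!}{n!(d-n)!}$ and the same coefficients $b_{n,\bfm}(\bfw,J)$. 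Hence $Q_{\bfm}(t;\bfw)=b_{d,\bfm}(t;\bfw)$ for every $\bfm\in\Zz^d$. The polynomial parts therefore cancel, and the coefficient of $R$ reduces to the term $R_{\bfm}(t;\bfw)$ from \eqref{eq3.38}.

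It remains to split \eqref{eq3.38} into the pieces $R_{j,\bfm}$. Each summand in \eqref{eq3.38} carries the factor $\chi(Z_j,\bfm)$, which is exactly $R_{j,\bfm}(t;\bfw)$ as written in the statement, so the decomposition $R_{\bfm}=\sum_j R_{j,\bfm}$ is immediate.

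The only substantive point to verify is that each denominator $\theta_{j,l}m_j-m_l$ is nonzero. This holds because $m_j\ne0$ on $Z_j$ and $\theta_{j,l}$ is irrational; this is the nondiagonality remark after Definition~5.1. Equivalently, the point $(m_1/w_1,\dots,m_d/w_d)t$ is nondiagonal, so the formulas of Proposition~5.1 apply in the first place.

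The main obstacle is bookkeeping in \eqref{eq3.38} itself, inherited from Proposition~5.2. One has to check that substituting $\bfy=(m_1t/w_1,\dots,m_dt/w_d)$ into $X_2$ and multiplying by $t^d/(w_1\cdots w_d)$ from \eqref{eq3.6} produces $\frac{1}{m_j\prod_{l\ne j}(\theta_{j,l}m_j-m_l)}$ with no leftover powers of $t$ or $w$. The check goes as follows.
\begin{itemize}
\item In $y_j\prod_{l\ne j}(y_j-y_l)$, write $y_j-y_l=\frac{t}{w_j}(m_j-\theta_{j,l}^{-1}\ldots)$. Rather than that, factor $y_j-y_l=\frac{t}{w_l}\bigl(\theta_{j,l}m_j-m_l\bigr)$, using $\frac{w_l}{w_j}=\theta_{j,l}$.
\item Together with $y_j=\frac{t\,m_j}{w_j}$, the denominator becomes $\frac{t^d}{w_1\cdots w_d}\,m_j\prod_{l\ne j}(\theta_{j,l}m_j-m_l)$.
\item This prefactor cancels exactly against the factor $\frac{t^d}{w_1\cdots w_d}$ from \eqref{eq3.6}.
\end{itemize}
I would record this verification explicitly, then conclude.
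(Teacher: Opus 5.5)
Your proposal is correct and follows the paper's own argument. The paper gets Proposition~6.2 immediately by subtracting the expansion of Proposition~6.1 (with $k=d$) from that of Proposition~5.2: the polynomial parts $Q_{\bfm}$ and $b_{d,\bfm}$ coincide, so only the $X_2$-terms \eqref{eq3.38} survive. Your explicit check that $y_j\prod_{l\ne j}(y_j-y_l)$ produces the factor $t^d/(w_1\cdots w_d)$, which cancels, is correct and simply spells out a step the paper leaves inside the proof of Proposition~5.2.
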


It is worth noting the following feature of
the Fourier expansion described in Proposition 6.2. 
If all questions of convergence are ignored, we may write
\begin{equation}
	R(t;\bfw,\{\bfu\})\,=\,\sum\nolimits_{j=1}^d\,\, R_{j}(t;\bfw,\{\bfu\})\, ,  
	\label{eq3.002}
\end{equation}
where
\begin{align}
	R_j(t;\bfw, &\,\{\bfu\})\notag
	\\
	&\sim\,\left(\frac{1}{2i\pi}\right)^d
 \,\sum_{m_j\ne0} 
	\frac{e^{2i\pi \frac{m_j}{w_j}t+2i\pi m_ju_j}}{m_j}
	\prod\nolimits_{l\in [d], l\ne j}
	\sum_{m_l\in\Zz}
	\frac{e^{2i\pi m_lu_l}}{\theta_{j,l}m_j -m_l}\, .
	\label{eq3.06a}
\end{align}

This formal calculation can be continued as follows:
Using in \eqref{eq3.06a} the Fourier expansion 
\begin{equation}
	\frac{e^{2i\pi a\, ((u))}}{2i\sin \pi a}\, =\,	
	\frac{e^{2i\pi a \{u\}}}{e^{2i\pi 
			a}-1}\,\sim\,\sum_{m\in\Zz}\frac{e^{2i\pi 
			mu}}{2i\pi 
		(a-m)},\quad a\notin\Zz\, .
	\label{eq4.07}
\end{equation}
with $a=\theta_{j,l}\,m_j$ and $m=m_l$, we obtain
\begin{align}
	R_j(t;\bfw, \bfu)&\sim\,\,\frac{1}{2i\pi}\,
	\,\sum_{m\in\Zz_{\ne 0}} \,\, 
	\frac{e^{2i\pi \frac{m}{w_j}\left(t+\bfw\centerdot\{\bfu\}\right)}}
	{m\prod\nolimits_{l\in [d], l\ne j}(e^{2i\pi\,\theta_{j,l}m}-1)}\notag
	\\
	&\sim\,\frac{1}{2^{d-1}\pi}\,\,\sum_{m=1}^{\infty} 
	\frac{\cos\left(\frac{2i\pi m}{w_j}
		\big(t+\bfw\centerdot((\bfu))\,\big)+\frac{\pi 
			d}{2}\right)}{m\prod\nolimits_{l\in 
			[d], l\ne j}\sin(\pi\,\theta_{l,j}m)}\, .
	\label{eq4.08*}
\end{align}

We do not discuss this interesting formal expansion in more detail here. 
We note only that this expansion can be easily interpreted in terms of 
distributions 
(generalized functions). 

Let us draw attention to the following circumstance.
The error term $R(t;\bfw,\{\bfu\})$, as a function of $\bfu$, is periodic and 
integrable on the torus $\Rr^d/\Zz^d$. Thus, its Fourier coefficients satisfy
$R_{\bfm}(t;\bfw)\to 0$ as $\bfm\to\infty$ by the Riemann--Lebesgue lemma. 
At the same time, the coefficients 
$R_{j,\bfm}(t;\bfw)$  grow because of the small denominators in \eqref{eq3.06a}.
This means that the expansions \eqref{eq3.06a} cannot be Fourier series of  
integrable functions. 
Therefore, the equality \eqref{eq3.002} should be understood as a representation of 
the integrable function  $R(t;\bfw,\bfu)$  as a sum of distributions 
$R_j(t;\bfw, \,\bfu)$ for $j\in [d]$.


\section{Smoothed Fourier series. Proof of the Main Theorem}
\label{sec7}

Recall that the space $\frak{S}(\Rr^d)$ of \textit{rapidly decreasing} 
functions consists of $\CCC^{\infty}$ functions $\varphi (\bfx)$ of $\bfx\in\Rr^d$
that satisfy
\begin{equation*}
	\big|\,\frac{\partial^{b_1+\cdots+b_d}}{\partial x^{b_1}
		\dots\partial x^{b_d}}\,\,\varphi(\bfx)\,\big| < C_{\varphi,\bfb,A}\, 
	(\,1+|\bfx|_{\infty})^{-A},
	\quad |\bfx|_{\infty}=\max\nolimits_{j\in[d]}\,|x_j|\,,
	\label{eq6.02**}	
\end{equation*}
for all multi-indexes $\bfb=(b_1,\dots,b_d)\in\Zz_{\ge0}^d$
and for arbitrarily large $A>0$  (see \cite[Chap.1, Sec. 3]{27}). 

\begin{lemma}
	Let a set of numbers 
	$\bf\theta = (\theta_1,\dots,\theta_{d-1})\in\Rr^{d-1}$  be
	$\kappa$-multiplicatively approximable and let
	$\varphi\in\frak{S}(\Rr^d)$.
	Then the series 
	\begin{align*}
		\sum_{m_d\ne0} 
		\frac{1}{|m_d|}
		\prod_{l\in [d-1]}\,
		\sum_{m_l\in\Zz}
		\frac{1}{|\,\theta_{l} m_d -m_l\,|}\,\, |\varphi (T^{-1}\bfm)| 
	\end{align*}	
converges.
\end{lemma}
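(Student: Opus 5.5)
The plan is to bound the series by a weighted sum over $m_d$ alone, using rapid decay of $\varphi$ in every coordinate, and then to control the small denominators by the $\kappa$-multiplicative approximability. First, since $\varphi\in\frak{S}(\Rr^d)$, for any $A>0$ we have
\begin{equation*}
|\varphi(T^{-1}\bfm)|\le C_A\prod\nolimits_{l\in[d]}(1+T^{-1}|m_l|)^{-A},
\end{equation*}
because $(1+|\bfx|_\infty)^{-dA}\le\prod_l(1+|x_l|)^{-A}$. Thus the series is dominated by
\begin{equation*}
C_A\sum_{m_d\ne0}\frac{(1+T^{-1}|m_d|)^{-A}}{|m_d|}\prod_{l\in[d-1]}\,\sum_{m_l\in\Zz}\frac{(1+T^{-1}|m_l|)^{-A}}{|\theta_l m_d-m_l|}\,,
\end{equation*}
which factorizes in $m_1,\dots,m_{d-1}$ for fixed $m_d$. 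Note that $\theta_lm_d-m_l\ne0$ because each $\theta_l$ is irrational; this follows from the Definition~2.1 inequality, which forces $\langle\theta_lm\rangle>0$ for all $m\ge1$.

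Next I estimate each inner sum for fixed $m=m_d$. The term with $m_l$ the nearest integer to $\theta_lm$ contributes at most $\langle\theta_lm\rangle^{-1}$. For all other $m_l$, $|\theta_lm-m_l|\ge\frac12+|k|$ roughly, where $k$ indexes the distance from the nearest integer. Since $|m_l|\le|\theta_l||m|+|k|+1$, the weight is at least comparable to $(1+T^{-1}|m_l|)^{-A}$, and splitting at $|k|\approx T(1+|m|)$ shows that the remaining sum is $O(\log(2+T+T|\theta_l||m|))$. Hence each inner factor is
\begin{equation*}
\ll\langle\theta_lm\rangle^{-1}+\log(2+T|m|)\ll\langle\theta_lm\rangle^{-1}\log(2+T|m|),
\end{equation*}
using $\langle\cdot\rangle\le1/2$. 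The product over $l\in[d-1]$ is then
\begin{equation*}
\ll(\log(2+T|m|))^{d-1}\prod\nolimits_l\langle\theta_lm\rangle^{-1}.
\end{equation*}

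Finally, by symmetry $m\mapsto-m$ (since $\langle-x\rangle=\langle x\rangle$), the series is bounded by
\begin{equation*}
\sum_{m\ge1}\frac{(\log(2+Tm))^{d-1}(1+m/T)^{-A}}{m\prod_l\langle\theta_lm\rangle}.
\end{equation*}
By \eqref{eq1.18*}, each term is at most $c(\theta)^{-1}m^{\kappa}(\log(2+Tm))^{d-1}(1+m/T)^{-A}$. Choosing $A>\kappa+2$ makes this summable: the terms are $O(m^{\kappa-A}\log^{d-1}m)$ for $m\ge T$. This gives convergence. Alternatively, partial summation with Lemma~3.1 gives the sharper size $O(T^{\kappa+\epsilon})$, consistent with $S(\theta,T)$, which is presumably what is used later.

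The main obstacle is the second step, the uniform bound on the inner sum over $m_l$. The nearest-integer term must be isolated, and the tail must be controlled using only the decay in $m_l$, with dependence on $m_d$ that is merely logarithmic, so that the last sum stays dominated by the Diophantine inequality.
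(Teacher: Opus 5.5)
Your argument is correct, but it takes a more elaborate route than the paper. The paper does not factorize or single out the nearest integer. It simply notes that $|\theta_l m_d-m_l|\ge\langle\theta_l m_d\rangle$ for \emph{every} $m_l\in\Zz$. By \eqref{eq1.18*} each term of the series is then at most $c(\theta)^{-1}|m_d|^{\kappa}|\varphi(T^{-1}\bfm)|$, and the rapid decay of $\varphi$ in all $d$ variables makes $\sum_{\bfm}|m_d|^{\kappa}(1+T^{-1}|\bfm|_\infty)^{-A}$ finite once $A>d+\kappa$.

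That crude majorant is of size about $T^{d+\kappa}$, but nothing more is needed. Lemma~7.1 is used only to justify absolute convergence, and hence the rearrangements, in Proposition~7.1. The quantitative estimate there comes from \eqref{eq4.07} and the positivity of $\omega$, with no logarithmic loss. Your route isolates the nearest-integer term and bounds the rest by a logarithm. This gives the much sharper majorant $\sum_{m\ge1}(\log(2+Tm))^{d-1}(1+m/T)^{-A}\big/\big(m\prod_l\langle\theta_l m\rangle\big)$, which is essentially the bound of Proposition~7.1 up to log factors. So it buys a quantitative statement, at the cost of the tail estimate you flag as the main obstacle; the paper's one-line bound avoids that estimate entirely.

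There is one slip in that tail estimate. To bound the weights from above you need the lower bound $|m_l|\ge|k|-|\theta_l||m|-1$, not the upper bound $|m_l|\le|\theta_l||m|+|k|+1$ that you wrote, which points the wrong way. For $|k|\ge 2(|\theta_l||m|+1)$ this gives $|m_l|\ge|k|/2$, so $(1+T^{-1}|m_l|)^{-A}\le(1+|k|/(2T))^{-A}$. Combined with $|\theta_l m-m_l|\ge|k|-\tfrac12\ge|k|/2$, this correction yields your claimed $O(\log(2+T+T|\theta_l||m|))$, and the rest of your proof goes through.
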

\begin{proof}
By Definition~2.1,	
for a $\kappa$-multiplicatively approximable set, we have	
\begin{align}
	|m_d\prod\nolimits_{l\in [d], l\ne j}(\theta_{l}m_d -m_l)|
	\ge |m_d|\prod\nolimits_{l\in [d-1]}\,\, \langle 
	\theta_{l}\,\, m_d\rangle >c_{\kappa} \, |m_d|^{-\kappa}. 
	\label{eq4.14}
\end{align}
Therefore, the series under consideration is
$\ll\sum_{\bfm\in\Zz^d}|m_d|^{\kappa}\,(\,1+|\bfm|_{\infty})^{-A}<\infty$,
since $A$ can be chosen arbitrarily large.
\end{proof}

\label{sec4}
For $T>0$, we set
\begin{equation*}
	\omega_T(\bfx)=T^d\,\omega(T\,\bfx)\, ,\quad
	\omega (\bfx)=\prod\nolimits_{j=1}^d \omega (x_j)  \, , 
\end{equation*}
where $\omega (x)$ for $x\in\Rr$ is a nonnegative even $\CCC^{\infty}$ function 
supported on $[-\frac12,\frac12]$ that satisfies 
$\int\nolimits_{\Rr}\,\omega(x)\,dx=1$. The Fourier transform is
$\widetilde\omega_T(\bfy)=\widetilde\omega(T^{-1}\bfy)$. 
In the sense of distributions, $\omega_T(\cdot)$ 
tends to the Dirac delta function as $T\to\infty$. 

Consider the following convolutions:
\begin{equation}
	\left.
\begin{aligned}
R*\omega_T\,(t;\bfw,\bfu)=&
\int\nolimits_{\Rr^d}R(t;\bfw,\bfx)\,\,\omega_T(\bfu-\bfx)\,d\bfx\, , 	
\\
\BBB_d*\omega_T\,(t;\bfw,\bfu)=&
\int\nolimits_{\Rr^d}\BBB_d(t;\bfw,\{\bfx\})\,\,\omega_T(\bfu-\bfx)\,d\bfx\, ,	
\\
N*\omega_T\,(t;\bfw,\bfu)=&
\int\nolimits_{\Rr^d}N(t;\bfw,\bfx)\,\,\omega_T(\bfu-\bfx)\,d\bfx	
\end{aligned}
\quad\right\}
\label{eq4.09l}
\end{equation}
which are related by
\begin{equation*}
N*\omega_T\,(t;\bfw,\bfu)=\frac{1}{d!\,w_1\dots w_d} \,\,
\BBB_d*\omega_T\,(t;\bfw,\bfu)+R*\omega_T\,(t;\bfw,\bfu)\, .	
\end{equation*}

\begin{proposition}
Let the inclines $\Theta_{\widehat\bfw}$ be $\kappa$-multiplicatively 
approximable. Then we have the bounds
\begin{equation}
	\sup\nolimits_{\bfu\in\Rr^d}\,|R*\omega_T(t;\bfu)|\,\le\,
\,\frac{1}{2^{d-1}\pi}\,\,	\,\SSS_0(\,\Theta_{\widehat\bfw},\, \omega_T)\, ,
	\label{eq4.012}
\end{equation}
where
$\SSS_0(\,\Theta_{\widehat\bfw},\,\omega_T)=\sum\nolimits_{j=1}^d\,\,
\SSS_0(\,{\bf{\theta}}_j,\,\omega_T)$ and
\begin{align*}
	\SSS_0(\,{\bf{\theta}}_j, \,\omega_T)=
	\,\sum_{m=1}^{\infty} 
	\frac{|\widetilde\omega(T^{-1}\,m)|}{m\prod\nolimits_{l\in 
			[d], 
			l\ne j}|\sin(\pi\,\theta_{j,l}m)|}\, .
\end{align*}
\end{proposition}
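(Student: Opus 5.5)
The idea is to compute the convolution $R*\omega_T$ through its Fourier series. Because of the smoothing, that series converges absolutely. Then each direction $j$ can be summed exactly in the transverse variables $m_l$, $l\ne j$, using the expansion \eqref{eq4.07}.

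Since $R(t;\bfw,\{\bfu\})$ is bounded and periodic in $\bfu$, the convolution $R*\omega_T$ is a $\CCC^\infty$ periodic function. Its Fourier coefficients are $R_{\bfm}(t;\bfw)\,\widetilde\omega(T^{-1}\bfm)$, where $R_{\bfm}$ is given by Proposition~6.2. By Lemma~7.1, applied for each $j$ to $\theta=\theta_j$ and the Schwartz function $\widetilde\omega$, the series
$$\sum_{\bfm}\sum_j |R_{j,\bfm}(t;\bfw)|\,|\widetilde\omega(T^{-1}\bfm)|$$
converges. Hence
\[
R*\omega_T(t;\bfw,\bfu)=\sum_{j=1}^d\sum_{\bfm\in Z_j}R_{j,\bfm}(t;\bfw)\,\widetilde\omega(T^{-1}\bfm)\,e^{2i\pi\bfm\centerdot\bfu}
\]
holds pointwise, and the $j$-sums may be rearranged freely.

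Fix $j$ and $m_j=m\ne0$. The factor $\widetilde\omega(T^{-1}\bfm)=\prod_l\widetilde\omega(T^{-1}m_l)$ splits, so the sum over $m_l$, $l\ne j$, factors into the product
$$\prod_{l\ne j}\sum_{m_l}\frac{\widetilde\omega(T^{-1}m_l)\,e^{2i\pi m_lu_l}}{\theta_{j,l}m-m_l}.$$
Each factor is, by \eqref{eq4.07}, the Fourier series of
$$2i\pi\,\frac{e^{2i\pi a\{\cdot\}}}{e^{2i\pi a}-1},\qquad a=\theta_{j,l}m,$$
convolved in $u_l$ with the one-dimensional $\omega_T$. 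Indeed, multiplying Fourier coefficients by $\widetilde\omega(T^{-1}m_l)$ corresponds to convolution with $T\omega(Tx)$, and the smoothed series converges absolutely. Since $\omega_T\ge0$ has mass $1$ and $|e^{2i\pi a\{x\}}|=1$, each factor is bounded in modulus by
$$\frac{2\pi}{|e^{2i\pi a}-1|}=\frac{\pi}{|\sin\pi\theta_{j,l}m|}.$$
Including the prefactor $(2i\pi)^{-d}$ and $1/m$, the $j$-th term at fixed $m$ is therefore at most
$$\frac{|\widetilde\omega(T^{-1}m)|}{2^{d-1}\cdot 2\pi\,|m|\,\prod_{l\ne j}|\sin\pi\theta_{j,l}m|}.$$
The terms with $\pm m$ have equal bounds, since $\omega$ is even and so $\widetilde\omega$ is even, and $|\sin|$ is even. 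Summing over $m\ne0$ therefore gives $\frac{1}{2^{d-1}\pi}\,\SSS_0(\theta_j,\omega_T)$. Summing over $j$ gives \eqref{eq4.012}, uniformly in $\bfu$.

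The main obstacle is justifying the interchange: the inner factorization must be legitimate inside the absolutely convergent $d$-fold series. Lemma~7.1 controls only the product of absolute values. The convergence of
$$\sum_{m_l}\frac{|\widetilde\omega(T^{-1}m_l)|}{|\theta_{j,l}m-m_l|}$$
for each fixed $m$ is nevertheless immediate from the irrationality of $\theta_{j,l}$ and the rapid decay of $\widetilde\omega$, so Fubini applies. One must also check that the smoothed identity $\sum_{m_l}\widetilde\omega(T^{-1}m_l)\frac{e^{2i\pi m_lu}}{2i\pi(a-m_l)}=\big(\frac{e^{2i\pi a\{\cdot\}}}{e^{2i\pi a}-1}\big)*\omega_T(u)$ holds pointwise. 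This follows because the left side is the absolutely convergent Fourier series of the right side, a continuous function.
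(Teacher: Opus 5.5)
Your proposal is correct and follows the paper's proof. You expand $R*\omega_T$ using the coefficients of Proposition~6.2 multiplied by $\widetilde\omega(T^{-1}\bfm)$, get absolute convergence from Lemma~7.1, factor the transverse sums, and bound each one-dimensional factor via \eqref{eq4.07} as the convolution of $e^{2i\pi a\{x\}}/(e^{2i\pi a}-1)$, of modulus $1/(2|\sin\pi a|)$, with a nonnegative kernel of mass one; this yields the same constant $1/(2^{d-1}\pi)$. Your extra justifications (pointwise equality from continuity plus absolute convergence, Fubini for the factorization, and pairing $\pm m$) only make explicit what the paper leaves implicit.
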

\begin{proof}
 It follows from Proposition~6.2 that
\begin{equation*}
	R*\omega_T(t;\bfw,\bfu)\,=\,\sum\nolimits_{j=1}^d\,\, 
	R_{j}*\omega_T(t;\bfw,\bfu)\, ,  
\end{equation*}
where
\begin{align}
	R_j*\omega_T(t;\bfw, \bfu)\notag
	=\,\left(\frac{1}{2i\pi}\right)^d
	\,&\sum_{m_j\ne0} 
	\frac{e^{2i\pi 
	(\frac{t}{w_j}+u_j)m_j}\,\,\widetilde\omega(T^{-1}\,m_j)}{m_j}\notag
	\\
	&\times\prod\nolimits_{l\in [d], l\ne j}
	\sum_{m_l\in\Zz}
	\frac{e^{2i\pi m_lu_l}\,\,\widetilde\omega(T^{-1}\,m_l)}{\theta_{j,l}m_j 
	-m_l}\, .
	\label{eq4.06*}
\end{align}
Since $\widetilde\omega\in\frak{S}(\Rr^d)$, this series  
 converges absolutely by Lemma~7.1.
Using \eqref{eq4.07}, we find 
\begin{align*}	
	\sum_{m\in\Zz}\frac{\,e^{2i\pi mu}\,\,\widetilde\omega(m)}{2i\pi 
		(a-m)} 
		=\, 
		\frac{1}{2i\sin \pi a}\,\int\nolimits_{\Rr}e^{2i\pi a\, ((x))}
		\omega(x-u) dx\,.
\end{align*}
Hence,
\begin{align*}	
\sup_{u\in\Rr}\,\big|\sum_{m\in\Zz}\frac{\widetilde\omega(m)\,e^{2i\pi
 mu}}{2i\pi (a-m)}\,\big| 
	\le\, 
	\frac{1}{2|\sin \pi a|}\,\int\nolimits_{\Rr}\omega(x-u) dx = \frac{1}{2|\sin 
		\pi a|}\, ,
\end{align*}
since $\omega(x)\ge0$.
Applying this inequality to \eqref{eq4.06*}, we obtain the  bound  \eqref{eq4.012}.
	\end{proof}

We now compare $\BBB_d(t;\bfw,\{\bfu\})$ 
 with its convolution \eqref{eq4.09l}.
\begin{lemma}
Let $\bfu\in\UUU(\delta),\,\, |\,\bfu_1-\bfu\, |_{\infty}\le T^{-1},\, |t_1 
-t|\le T^{-1}$, and $T\ge 2\delta^{-1}$. 
Then
\begin{equation}
	\left.
	\begin{aligned}
	\BBB_d(t_1;\bfw,\{\bfu_1\})&=\BBB_d(t;\bfw,\{\bfu\}) + O_{\delta}(T^{-1} 
	t^{d-1})\, ,
	\\
	\BBB_d*\omega_T(t;\bfw,\bfu)&=\BBB_d(t;\bfw,\{\bfu\}) + 
	O_{\delta}(T^{-1}t^{d-1})\,, 
	\\
	\BBB_d*\omega_T(t_1;\bfw,\bfu_1)&=\BBB_d(t;\bfw,\{\bfu\}) + O_{\delta}(T^{-1} 
	t^{d-1})\, .
\end{aligned}
\quad\right\}
\label{eq4.0002****}
\end{equation}
	\end{lemma}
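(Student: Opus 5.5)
We use the representation \eqref{eq2.22a}: $\BBB_d(t;\bfw,\{\bfu\})=\BBB_d(t+\bfw\centerdot\{\bfu\};\bfw)$, where $\BBB_d(\cdot;\bfw)$ is a fixed polynomial of degree $d$ in one variable. All three estimates then reduce to the mean value theorem for this polynomial, together with the observation that $\{\bfu\}$ depends smoothly (in fact affinely) on $\bfu$ near $\UUU(\delta)$.

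\textbf{First estimate.} Since $\bfu\in\UUU(\delta)$, each $u_j$ lies in some interval $[\delta+m_j,1-\delta+m_j]$. Because $|\bfu_1-\bfu|_\infty\le T^{-1}\le\delta/2$, the point $\bfu_1$ stays in the same unit cube, so no coordinate crosses an integer. Hence $\{\bfu_1\}-\{\bfu\}=\bfu_1-\bfu$, and
$$|(t_1+\bfw\centerdot\{\bfu_1\})-(t+\bfw\centerdot\{\bfu\})|\le (1+|\bfw|_1)\,T^{-1}.$$
The derivative satisfies $\BBB_d'(s;\bfw)=d\,\BBB_{d-1}(s;\bfw)$ by the Appell property. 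This is a polynomial of degree $d-1$, so on the interval between the two arguments it is $O((1+t)^{d-1})$; both arguments are $t+O(1)$ since $0\le\{\bfu\}<1$. The mean value theorem then gives the first line of \eqref{eq4.0002****}, with the constant depending on $\bfw$ and $d$. For large $t$ the factor $(1+t)^{d-1}$ is $\ll t^{d-1}$, and we assume $t\ge1$, as is implicit in the statement.

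\textbf{Second estimate.} Since $\omega_T$ is supported in $[-\frac1{2T},\frac1{2T}]^d$, is nonnegative and has total mass $1$, we can write
$$\BBB_d*\omega_T(t;\bfw,\bfu)-\BBB_d(t;\bfw,\{\bfu\})=\int\big(\BBB_d(t;\bfw,\{\bfx\})-\BBB_d(t;\bfw,\{\bfu\})\big)\,\omega_T(\bfu-\bfx)\,d\bfx .$$
Every $\bfx$ in the support satisfies $|\bfx-\bfu|_\infty\le T^{-1}$. The first estimate, applied with $t_1=t$ and $\bfu_1=\bfx$, bounds the integrand by $O(T^{-1}t^{d-1})$ uniformly. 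Integrating against the probability density $\omega_T$ gives the second line.

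\textbf{Third estimate.} We apply the second line at the point $(t_1,\bfu_1)$ and then the first line. One issue is that $\bfu_1$ need not lie in $\UUU(\delta)$; it lies only in $\UUU(\delta-T^{-1})\subset\UUU(\delta/2)$. So we run the previous two steps with $\delta$ replaced by $\delta/2$. The requirement $T\ge2\delta^{-1}$ gives $T^{-1}\le\delta/2$, which keeps the support of the convolution, now a $T^{-1}$-neighbourhood of $\bfu_1$ and hence contained in a $2T^{-1}$-neighbourhood of $\bfu$, away from the integer hyperplanes. If necessary, strengthen to $3T^{-1}<\delta$ by adjusting constants; this only affects the $O_\delta$ constant. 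Then
$$\BBB_d*\omega_T(t_1;\bfw,\bfu_1)=\BBB_d(t_1;\bfw,\{\bfu_1\})+O(T^{-1}t^{d-1})=\BBB_d(t;\bfw,\{\bfu\})+O(T^{-1}t^{d-1}).$$

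\textbf{Main obstacle.} The only delicate point is the bookkeeping ensuring that every point involved stays in the same cube $K(\delta')+\bfm$, so that $\{\cdot\}$ is affine there and the jumps of the $B_1(\{u_j\})$ factors across integer hyperplanes are never met. Everything else is the one-variable mean value theorem for a degree-$d$ polynomial.
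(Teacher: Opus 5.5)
Your proof is correct and is essentially the paper's argument. Both control the variation of $\BBB_d(t;\bfw,\{\bfu\})$ for $(t,\bfu)$ confined to a single cube of $\UUU(\delta/2)$, then average against the nonnegative, unit-mass kernel $\omega_T$ to get the second and third bounds. The paper splits the first bound into a $t$-variation and a variation of the lower coefficients $c_j(\bfu)$. You instead collapse both into one mean-value estimate for the shifted polynomial $\BBB_d(t+\bfw\centerdot\{\bfu\};\bfw)$, which makes the same-cube bookkeeping explicit. Your hedge $3T^{-1}<\delta$ is unnecessary: $\omega_T$ is supported in $[-\frac{1}{2T},\frac{1}{2T}]^d$, so every point used lies within $\frac{3}{2T}\le\frac{3\delta}{4}$ of $\bfu$.
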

	\begin{proof}
		We have
	\begin{align*}
		\BBB_d(t_1;\bfw,\{\bfu_1\})-&\BBB_d(t;\bfw,\{\bfu\})
		\\
		\notag 
		=& \BBB_d(t_1;\bfw,\{\bfu_1\})-\BBB_d(t;\bfw,\{\bfu_1\}) 
		+\BBB_d(t;\bfw,\{\bfu_1\})-\BBB_d(t;\bfw,\{\bfu\})\, .
	\end{align*}	
$\BBB_d(t;\bfw,\{\bfu\})$ is a polynomial of degree $d$ whose leading term 
independent of $\bfu$. Therefore,  
$\BBB_d(t_1;\bfw,\{\bfu_1\})-\BBB_d(t;\bfw,\{\bfu_1\})=O(T^{-1} t^{d-1})$,
and
	\begin{equation*}
		\BBB_d(t;\bfw,\{\bfu_1\})-\BBB_d(t;\bfw,\{\bfu\}) = \sum_{j=0}^{d-1} 
		\,\big(c_j(\bfu_1)-c_j(\bfu)\big)\,t^j =O_{\delta}(T^{-1} t^{d-1})\, ,
		\label{eq4.0002***}
	\end{equation*}
	since $\bfu_1\in\UUU(\delta/2)$ and $c_j(\bfu_1)$ for $0\le j\le d-1$ are 
	$\CCC^{\infty}$ functions on 
	$\UUU(\delta/2)$. This proves the first bound in \eqref{eq4.0002****}.
		Finally, we have 
		\begin{align}
			\BBB_d*\omega_T(t;\bfw,\bfu)&-\BBB_d(t;\bfw,\{\bfu\})\notag
			\\
			&=
	\int\nolimits_{\Rr}\left(\BBB_d(t;\bfw,\{\bfx\})-\BBB_d(t;\bfw,\{\bfu\})\right)
			\omega_T (\bfu -\bfx) d\bfx\, .
			\label{eq4.a2}
		\end{align}
If $\bfx$ belongs to the support of $\omega_T(\bfx-\bfu)$, then 
$\bfx\in\UUU(\delta/2)$. Combining \eqref{eq4.a2} with the first bound in 
\eqref{eq4.0002****}, we obtain the second and third bounds in \eqref{eq4.0002****}.
		\end{proof}

Set $t^{\pm}=t\pm 2T^{-1},\,\bfv^{\pm}=\mp T^{-1}\bfe_1$;
we assume that $t$ and $T$ are large enough so that $t^{-}>0$.
Consider the simplices 
$\Delta^{-}(t^{\pm};\bfw,\bfv^{\pm})=\Delta^{-}(t^{\pm};\bfw)+\bfv^{\pm}$:
\begin{align*}
	\Delta^{-}(t^{+};\bfw, \bfv^{+})
	&=\Delta^{-}(t+2T^{-1}\bfe_1\centerdot\bfw;\,\bfw,\, -T^{-1}\bfe_1)\notag
	\\
	&=\{\bfx: x_j>-T^{-1}, j\in [d],\,\, 
	\bfx\centerdot\bfw\,<t+T^{-1}\bfe_1\centerdot\bfw \}\, ,\notag
	\\
	\Delta^{-}(t^{-};\bfw,\bfv^{-})
	&=\Delta^{-}(t-2T^{-1}\bfe_1\centerdot\bfw;\,\bfw, T^{-1}\bfe_1)\notag
	\\
	&=\{\bfx: x_j>T^{-1}, j\in [d],\,\, 
	\bfx\centerdot\bfw\,<t-T^{-1}\bfe_1\centerdot\bfw \}\, ,
\end{align*}
Clearly, 
$\Delta^{-}(t^{-};\bfw,\bfv^{-})\subset\Delta^{-}(t;\bfw)
\subset\Delta^{-}(t^{+};\bfw, 
\bfv^{+})\, .$
Define the convolution:
\begin{equation*}
	\chi*\omega_T(t;\bfw,\bfu)	
	=\int\nolimits_{\Rr^d}\chi
	(\Delta^{-}(t;\bfw),\bfx)
	\,\,\omega_T(\bfu-\bfx)\, d\bfx\, .
\end{equation*}

\begin{lemma}
For $\bfu\in\Rr^d$, we have the inequalities
\begin{equation}
	\chi*\omega_T(t^{-};\bfw,\bfu-\bfv^{-})\le \chi(\Delta^{-}(t;\bfw),\bfu) 
	\le\chi*\omega_T(t^{+};\bfw,\bfu-\bfv^{+})
	\label{eq4.0015}
	\end{equation} 
	and 
	\begin{equation}
N*\omega_T(t^{-};\bfw,\bfu+\bfv^{-})\le N(t;\bfw,\bfu)\le 
N*\omega_T(t^{+};\bfw,\bfu+\bfv^{+})\, .		
		\label{eq4.0016}
	\end{equation} 
\end{lemma}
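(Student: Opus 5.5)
The plan is to exploit two facts: the mollifier $\omega_T$ is nonnegative with total mass $1$, and it is supported in the small cube $[-\frac{1}{2T},\frac{1}{2T}]^d$. With these, \eqref{eq4.0015} becomes a purely geometric containment statement. Then \eqref{eq4.0016} follows by summing \eqref{eq4.0015} over the lattice, using the representation \eqref{eq3.1}.

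\emph{Step 1 (reduction to containments).} For any set $\Delta$ and any point $\bfz$, the convolution $\chi*\omega_T$ is a weighted average of $\chi(\Delta,\bfx)$ over $\bfx\in \bfz+[-\frac{1}{2T},\frac{1}{2T}]^d=:Q_T(\bfz)$. Hence it always lies in $[0,1]$, and:
\begin{itemize}
\item it equals $0$ unless $Q_T(\bfz)$ meets $\Delta$;
\item it equals $1$ whenever $Q_T(\bfz)\subset\Delta$ (up to a null set).
\end{itemize}
For the left inequality in \eqref{eq4.0015}, take $\bfz=\bfu-\bfv^{-}$ and $\Delta=\Delta^{-}(t^-;\bfw)$. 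It suffices to show: if $Q_T(\bfu-\bfv^-)$ meets $\Delta^-(t^-;\bfw)$, then $\bfu\in\Delta^-(t;\bfw)$. Equivalently, the $\frac{1}{2T}$-neighbourhood (in $|\cdot|_\infty$) of $\Delta^-(t^-;\bfw,\bfv^-)=\Delta^-(t^-;\bfw)+\bfv^-$ is contained in $\Delta^-(t;\bfw)$. For the right inequality, with $\bfz=\bfu-\bfv^+$, it suffices to show: if $\bfu\in\Delta^-(t;\bfw)$, then $Q_T(\bfu)\subset\Delta^-(t^+;\bfw,\bfv^+)$. In that case $Q_T(\bfu-\bfv^+)\subset\Delta^-(t^+;\bfw)$, so the convolution equals $1$.

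\emph{Step 2 (the containments).} We use the explicit descriptions of $\Delta^-(t^\pm;\bfw,\bfv^\pm)$ given above.

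For the first containment, a point within $\frac1{2T}$ of $\Delta^-(t^-;\bfw,\bfv^-)$ satisfies $x_j>T^{-1}-\frac1{2T}>0$ for every $j$. It also satisfies
\[
\bfw\centerdot\bfx< t-T^{-1}\bfw\centerdot\bfe_1+\tfrac1{2T}\bfw\centerdot\bfe_1<t,
\]
using $w_j>0$. So it lies in $\Delta^-(t;\bfw)$.

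For the second containment, if $\bfu\in\Delta^-(t;\bfw)$ and $|\bfx-\bfu|_\infty\le\frac1{2T}$, then $x_j>-\frac1{2T}>-T^{-1}$. Moreover
\[
\bfw\centerdot\bfx<t+\tfrac1{2T}\bfw\centerdot\bfe_1<t+T^{-1}\bfw\centerdot\bfe_1,
\]
so $\bfx\in\Delta^-(t^+;\bfw,\bfv^+)$. This proves \eqref{eq4.0015}.

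\emph{Step 3 (summation).} By \eqref{eq3.1},
\[
N(t;\bfw,\bfu)=\sum_{\bfm}\chi(\Delta^-(t;\bfw),\bfm-\bfu).
\]
Interchanging the (locally finite) sum with the convolution gives
\[
N*\omega_T(s;\bfw,\bfu')=\sum_{\bfm}\chi*\omega_T(s;\bfw,\bfm-\bfu').
\]
This uses that $\omega_T$ is even, so $\omega_T(\bfu'-\bfx)=\omega_T(\bfx-\bfu')$ and the variable change $\bfx\mapsto\bfm-\bfx$ is harmless. We apply \eqref{eq4.0015} at the point $\bfm-\bfu$. The left side becomes $\chi*\omega_T(t^-;\bfw,\bfm-(\bfu+\bfv^-))$, and similarly for the right side. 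Summing over $\bfm\in\Zz^d$ yields \eqref{eq4.0016}.

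The only real care is needed in Step 1: getting the signs of the shifts $\bfv^\pm$ and the direction of the convolution variable consistent. Once that bookkeeping is fixed, the containments in Step 2 are immediate. They use only $w_j>0$ and the fact that the support radius $\frac1{2T}$ is strictly smaller than the margin $T^{-1}$ built into $t^\pm$ and $\bfv^\pm$.
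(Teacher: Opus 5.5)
Your proof is correct and follows the paper's argument: the support and unit mass of $\omega_T$ reduce \eqref{eq4.0015} to the two geometric containments, and \eqref{eq4.0016} follows by writing $N*\omega_T(s;\bfw,\bfu')=\sum_{\bfm}\chi*\omega_T(s;\bfw,\bfm-\bfu')$ (using that $\omega_T$ is even) and summing \eqref{eq4.0015} at the points $\bfm-\bfu$. Your version is more precise than the paper's sketch. Its conditions (i) and (ii) should read $\bfu\notin\Delta^{-}(t;\bfw)$ and $\bfu\in\Delta^{-}(t;\bfw)$ respectively, exactly as you have them. You are also right to work from the explicit descriptions of $\Delta^{-}(t^{\pm};\bfw,\bfv^{\pm})$, which correspond to $t^{\pm}=t\pm 2T^{-1}\bfw\centerdot\bfe_1$.
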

\begin{proof}
$\omega_T(\bfu-\bfx)$
is supported on $[-1/(2T),1/(2T)]^d+\bfu$, and $0\le\chi*\omega_T(t;\bfu)\le1$. 
The simplices $\Delta^{-}(t^{+};\bfw, \bfv^{+})$ and $\Delta^{-}(t^{-};\bfw, 
\bfv^{-})$ 
are chosen so that the following holds:

\textit{(i)} If $\bfu\notin\Delta^{-}(t^{-};\bfw,\bfv^{-})$, then 
$\chi*\omega_T(t^{-};\bfw,\bfu-\bfv^{-})=0$,  and the 
left inequality in \eqref{eq4.0015} follows. 

\textit{(ii)} If 
$\bfu\in\Delta^{-}(t^{+};\bfw,\bfv^{+})$, 
then 
$\chi*\omega_T(t^{+};\bfw,\bfu-\bfv^{+})=1$, and the right inequality in
\eqref{eq4.0015} 
follows. 

It follows from \eqref{eq3.1} and \eqref{eq4.09l} that
\begin{equation}
N*\,\omega_T(t;\bfw,\bfu)=\sum\nolimits_{\bfm\in\Zz^d}\,
\chi*\,\omega_T(t;\bfw,\bfm-\bfu)\, .
\label{eq4.0017}	
	\end{equation}
Indeed,
\begin{align*}
	N*\,\omega_T(t;\bfw,\bfu)&=\sum\nolimits_{\bfm\in\Zz^d}
\int\nolimits_{\Rr^d}\chi(\Delta^{-}(t;\bfw),\bfm-\bfx)\,\omega_T(\bfu-\bfx)\, 
d\bfx
	\notag
	\\	
	&=\sum\nolimits_{\bfm\in\Zz^d}
	\int\nolimits_{\Rr^d}\chi(\Delta^{-}(t;\bfw),\bfy)\,
	\omega_T(\bfm -\bfu-\bfy)\, d\bfy
	\notag
	\\
	&=\sum\nolimits_{\bfm\in\Zz^d}\,\chi*\,\omega_T(t;\bfw,\bfm-\bfu)
	\, ,
	\end{align*}
since $\omega_T(x)$ is an even function.

Replacing $\bfu$ in \eqref{eq4.0015} by $\bfm-\bfu$ and summing  over 
$\bfm\in\Zz^d$, we obtain \eqref{eq4.0016} in view of
 \eqref{eq3.1} and \eqref{eq4.0017}. 
\end{proof}                                                                     

Combining Proposition 7.1 with Lemmas 7.2 and 7.3, we 
obtain the 
following.                                                                          
\begin{proposition}
Let inclines $\Theta_{\widehat\bfw}$ be $\kappa$-multiplicatively 
approximable, $\bfu\in\UUU(\delta)$, and $T\ge 2\delta^{-1}$.
Then the error term  \eqref{eq3.28**cc} satisfies the bound
\begin{equation}
	\RRR_{\delta}(t;\bfw)
	\,\le\,
\,\frac{1}{2^{d-1}\pi}\,\,	\,\SSS_0(\,\Theta_{\widehat\bfw},\, \omega_T)\,
+O(T^{-1}\,t^{d-1})\,,	
	\label{eq4.0018}
\end{equation}
where $\SSS_0(\,\Theta, \,\omega_T)$ is defined in Proposition~7.1.
\end{proposition}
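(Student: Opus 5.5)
The approach is a sandwich argument: we trap $N(t;\bfw,\bfu)$ between two smoothed counts, replace each smoothed count by its leading term plus a smoothed error, and control the smoothed error with Proposition~7.1. Fix $\bfu\in\UUU(\delta)$ and $T\ge 2\delta^{-1}$ (large enough that $t^{-}>0$). By \eqref{eq4.0016} of Lemma~7.3,
\begin{equation*}
N*\omega_T(t^{-};\bfw,\bfu+\bfv^{-})\le N(t;\bfw,\bfu)\le N*\omega_T(t^{+};\bfw,\bfu+\bfv^{+}).
\end{equation*}
Expanding each convolution with the relation
$N*\omega_T=\frac{1}{d!\,w_1\cdots w_d}\BBB_d*\omega_T+R*\omega_T$, the upper bound becomes
\begin{equation*}
\frac{1}{d!\,w_1\cdots w_d}\,\BBB_d*\omega_T(t^{+};\bfw,\bfu+\bfv^{+})+R*\omega_T(t^{+};\bfw,\bfu+\bfv^{+}),
\end{equation*}
and the lower bound has the same form with $t^{-},\bfv^{-}$.

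Next we compare the leading terms. We have $|t^{\pm}-t|=2T^{-1}$ and $|\bfv^{\pm}|_\infty=T^{-1}$. The third line of \eqref{eq4.0002****} in Lemma~7.2 applies here, possibly after replacing $T$ by $T/2$ or $\delta$ by $\delta/2$, which only changes constants. It gives
\begin{equation*}
\BBB_d*\omega_T(t^{\pm};\bfw,\bfu+\bfv^{\pm})=\BBB_d(t;\bfw,\{\bfu\})+O_\delta(T^{-1}t^{d-1}).
\end{equation*}
We bound the smoothed errors uniformly in the shift using \eqref{eq4.012}:
$|R*\omega_T(t^{\pm};\cdot)|\le \frac{1}{2^{d-1}\pi}\SSS_0(\Theta_{\widehat\bfw},\omega_T)$.
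This holds for every value of the time parameter, so it holds at $t^{\pm}$. Subtracting $\frac{1}{d!\,w_1\cdots w_d}\BBB_d(t;\bfw,\{\bfu\})=\LLLL_d(t;\bfw,\{\bfu\})$ (equal by \eqref{eq2.22a}) then gives
\begin{equation*}
|R(t;\bfw,\{\bfu\})|\le \frac{1}{2^{d-1}\pi}\SSS_0(\Theta_{\widehat\bfw},\omega_T)+O_\delta(T^{-1}t^{d-1}).
\end{equation*}

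The one-sided limits $R(t\mp0)$ are handled by the same argument with $t$ replaced by $t\mp\alpha$ and $\alpha\downarrow0$. The right-hand side is uniform in $t$ on bounded intervals, so the bound passes to the limit. Because the bound is uniform in $\bfu\in\UUU(\delta)$, taking the supremum yields \eqref{eq4.0018}.

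The main obstacle is the bookkeeping in Lemma~7.2. Its hypothesis asks for $|t_1-t|\le T^{-1}$, while here $|t^{\pm}-t|=2T^{-1}$. The shifted point $\bfu+\bfv^{\pm}$ must also stay in $\UUU(\delta/2)$ after the additional $T^{-1}$-smoothing. I would first check that the proof of Lemma~7.2 only uses $|t_1-t|\ll T^{-1}$ and that the point stays in a fixed $\UUU(\delta')$. Both hold once $T\ge 4\delta^{-1}$, which is absorbed into the $O_\delta$ constant.
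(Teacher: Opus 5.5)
Your proposal is correct and follows the paper's proof essentially step for step. You use the sandwich \eqref{eq4.0016} from Lemma~7.3, then Lemma~7.2 to replace $\BBB_d*\omega_T(t^{\pm};\bfw,\bfu+\bfv^{\pm})$ by $\BBB_d(t;\bfw,\{\bfu\})$ up to $O_\delta(T^{-1}t^{d-1})$, then the $t$- and $\bfu$-uniform bound \eqref{eq4.012} of Proposition~7.1 for the smoothed errors. The bookkeeping you flag is harmless: the proof of Lemma~7.2 only needs $|t_1-t|\ll T^{-1}$, and for $T\ge 2\delta^{-1}$ all points involved already lie in $\UUU(\delta/4)$, so you do not need to strengthen the hypothesis to $T\ge 4\delta^{-1}$.
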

\begin{proof}
The inequalities \eqref{eq4.0016} can be written as
\begin{align*}
	&\frac{1}{d!\,w_1\dots w_d} \,\,
	\BBB_d*\omega_T\,(t^{-};\bfw,\bfu+\bfv^{-})
	+R*\omega_T\,(t^{-};\bfw,\bfu+\bfv^{-})
	\\
	&\le\frac{1}{d!\,w_1\dots w_d} \,\,
	\BBB_d\,(t;\bfw,\bfu)+R\,(t;\bfw,\bfu)
	\\
	&\le\frac{1}{d!\,w_1\dots w_d} \,\,
	\BBB_d*\omega_T\,(t^{+};\bfw,\bfu+\bfv^{+})
	+R*\omega_T\,(t^{+};\bfw,\bfu+\bfv^{+})
	\, .	
\end{align*}
Lemma~7.2 implies 
$	\BBB_d*\omega_T(t^{\pm};\bfw,\bfu+\bfv^{\pm})=\BBB_d(t;\bfw,\{\bfu\}) + 
O_{\delta}(T^{-1} 
	t^{d-1})\, .$
Hence,
\begin{align*}
	|R(t;\bfw,\bfu)|\,\le\,
\max\{|R*\omega(t^{-};\bfw,\bfu+\bfv^{-})|,\,|R*\omega(t^{+};\bfw,\bfu+\bfv^{+})|\}
	+O(T^{-1}t^{d-1})\, .
\end{align*}
This inequality, together with  Proposition~7.1, implies \eqref{eq4.0018}.
\end{proof}

The Main Theorem now follows directly from Proposition~7.2.
\begin{proof}[\bf{Proof of Theorem~4.1}]
For a given $\epsilon>0$, write the sum $\SSS_0(\,\Theta, \,\omega_T)$ as 
follows
$$ \SSS_0(\,\Theta_{\widehat\bfw}, 
\,\omega_T)=\sum_{j=1}^d\,\sum\nolimits_{1\le m\le 
T^{1+\epsilon}} + \sum_{j=1}^d\,\sum\nolimits_{m> 
T^{1+\epsilon}}\,.$$
For the first sum, we have
\begin{align*}
\sum_{j=1}^d\,\sum\nolimits_{1\le m\le T^{1+\epsilon}}& = \,
\sum_{j=1}^d\,\sum\nolimits_{m\le T^{1+\epsilon}}\,
\frac{|\widetilde\omega(T^{-1}\,m)|}{m\prod\nolimits_{l\in [d], l\ne 
j}|\sin(\pi\,\theta_{j,l}m)|}
\notag
\\
&\le\sum_{j=1}^d\,\sum\nolimits_{m\le T^{1+\epsilon}}\,
\frac{1}{m\prod\nolimits_{l\in [d], l\ne j}|\sin(\pi\,\theta_{j,l}m)|}
\notag
\\
&=S_0(\,\Theta_{\widehat\bfw}, \,T^{1+\epsilon})\approx 
S(\,\Theta_{\widehat\bfw}, 
\,T^{1+\epsilon})\,,
\end{align*}
where $S_0(\,\Theta_{\widehat\bfw}, \,T)$ and $S(\,\Theta_{\widehat\bfw}, \,T)$ 
are defined in \eqref{eq1D}.

Since $\widetilde\omega(x)\in\frak{S}$, for the second sum we have
\begin{align*}
	\sum_{j=1}^d\,&\sum\nolimits_{m> T^{1+\epsilon}}= \,
	\sum_{j=1}^d\,\sum\nolimits_{m> T^{1+\epsilon}}\,
	\frac{|\widetilde\omega(T^{-1}\,m)|}{m\prod\nolimits_{l\in [d], l\ne 
	j}|\sin(\pi\,\theta_{j,l}m)|}
	\notag
	\\
	&\ll \sum\nolimits_{m> T^{1+\epsilon}}\,
	m^{\kappa}\,(1+T^{-1} m)^{-A}
	\ll T^A\, \sum\nolimits_{m> T^{1+\epsilon}}\,m^{\kappa -A}
	\notag
	\\
	&\ll T^{(\kappa +1)(1+\epsilon)-\epsilon A}
	= O_a(T^{-a})
\end{align*}
with an arbitrarily large $a>0$; here we used \eqref{eq4.14} and the fact that 
$\widetilde\omega(x)\in\frak{S}$. Therefore, 
$$\SSS_0(\,\Theta_{\widehat\bfw}, \,\omega_T)\ll S(\,\Theta_{\widehat\bfw}, 
\,T^{1+\epsilon})\,.$$ 
Substituting this bound into \eqref{eq4.0018}, we obtain \eqref{eq1.31aa}.
The proof is complete.
\end{proof}

\end{document}